\definecolor{agocolor}{rgb}{1.0, 0.0, 0.0}
\declaretheorem{lemma}
\declaretheorem{assumption}
\declaretheorem{definition}
\def\argmax{\mathop{\rm argmax}}
\def\argmin{\mathop{\rm argmin}}
\newcommand{\cmark}{\ding{51}}%
\newcommand{\xmark}{\ding{55}}%
\newcommand{\RemoveAlgoNumber}{\renewcommand{\fnum@algocf}{\AlCapSty{\AlCapFnt\algorithmcfname}}}
\def\mcal#1{\mathcal{#1}}
\def\bigT{\mcal{T}}
\def\bigV{\mcal{V}}
\def\bigS{\mcal{S}}
\def\bigU{\mcal{U}}
\newcommand{\eq}{\[}
\newcommand{\en}{\]}
\begin{document}
%===============
% TITLE

\begin{center}
{\LARGE Virtual Trading in Multi-Settlement Electricity Markets}\\
\end{center}

%Here is the author information. It is removed using an "\iffalse" but can be activated as described below.

%\iffalse  %Comment this line out (and the \fi below) once the paper is accepted!

% Authors:

\begin{center}
{\large
 \mbox{Agostino Capponi}\\
 \mbox{{\small Department of Industrial Engineering and Operations Research, Columbia University, New York, NY USA}}
}
\mbox{{\small ac3827@columbia.edu}}
\end{center}

\begin{center}
{\large
\mbox{Garud Iyengar}\\
\mbox{{\small Department of Industrial Engineering and Operations Research, Columbia University, New York, NY USA}}\\
\mbox{{\small garud@ieor.columbia.edu}}
}
\end{center}

\begin{center}
{\large
\mbox{Bo Yang}\\ % Name
\mbox{{\small Department of Industrial Engineering and Decision Analytics, The Hong Kong University of Science and Technology}}\\
\mbox{{\small Hong Kong SAR, China}}}\\
\mbox{{\small yangb@ust.hk}}
\end{center}

\begin{center}
{\large
\mbox{Daniel Bienstock}\\
\mbox{{\small Department of Industrial Engineering and Operations Research, Columbia University, New York, NY USA}}\\
\mbox{{\small Department of Applied Mathematics, Columbia University, New York, NY USA} }}\\ 
\mbox{{\small dano@ieor.columbia.edu}}
\end{center}

% Here is the abstract:
\vspace{0.2cm}
\begin{center}
{{\bf Abstract}}
\end{center}

{{\noindent
In the Day-Ahead (DA) market, suppliers sell and load-serving entities
(LSEs) purchase energy commitments, with both sides adjusting for
imbalances between contracted and actual deliveries 
in the Real-Time (RT) market. We develop a supply function equilibrium
model to study how virtual trading—speculating on DA–RT price spreads
without physical delivery—affects market efficiency. Without virtual
trading, LSEs underbid relative to actual demand in the DA market, pushing
DA prices below expected RT prices. Virtual trading narrows, and 
in the limit of large number
traders can eliminates, this price gap. {However, it does not induce quantity alignment: DA-cleared demand remains below true expected demand, as price alignment makes the LSE indifferent between markets and prompts it to reduce DA bids to avoid over-purchasing.} 
Renewable energy suppliers cannot offset these strategic
distortions. We provide empirical support to our main model implications
using data from the California and New York Independent System Operators.

% Suppliers and load-serving entities (LSEs) enter into contracts to secure energy in the Day-Ahead (DA) market, and later adjust for any discrepancy between contracted quantities and actual deliveries in the Real-Time (RT) market. 

% We develop a supply function equilibrium model to explore how virtual trading — a financial strategy that allows participants to speculate on price differences between the DA and RT markets without physical delivery — can mitigate these inefficiencies. Our analysis reveals that without virtual trading, LSEs often bid below their actual demand in the DA market, leading to DA prices that are lower than the expected RT prices. We show that introducing virtual trading narrows this price disparity and, as the number of virtual traders grows large, potentially eliminates it. However, this mechanism prompts LSEs to strategically reduce their bids in the DA market, deviating from their true demand forecasts. 
% Our analysis indicates renewable energy suppliers can not counteract LSEs' strategic behavior.
\looseness=-1
}
}

\medskip
{{\noindent {\bf Keywords:} Virtual trading, electricity market, renewable energy, supply function equilibrium, market design}}

% begin "double spacing" the text:
\baselineskip 20pt plus .3pt minus .1pt

% body of the paper

% keep footnote on the same page
\interfootnotelinepenalty=10000

%===============
\section{Introduction}
\label{sc:Intro}
%\textcolor{red}{\bf I think this is incorrect-- I believe that nowadays
%there is a recomputation every 5 minutes.  The following language is
%found in, e.g., a NYISO document:  "The Real-Time Market, which settles
%bids every five minutes, addresses in-day changes in operating conditions
%relative to what was anticipated in the Day-Ahead Market."  I am sending
%you the link.}   
%The U.S. electricity markets operate under a multi-settlement structure
%that includes day-ahead (DA) markets, bids adjustment (BA) periods, and
%real-time (RT) markets (see, e.g., \citealt{ela2014evolution}).  
The U.S. electricity markets %
%are structured around  
utilize a dual settlement system consisting of  
a day-ahead (DA) and real-time (RT) markets % consumption over a particular period. {\color{blue}This system allows
for the procurement and transaction of  electricity.
Market participants, including suppliers and load-serving entities (LSEs), make electricity reservations in the DA market based on anticipated conditions in the RT market. Their planning takes into account expected demand and the projected availability of renewable energy sources. Any differences between the reserved amounts and the actual electricity deliveries are adjusted in the RT market. This structure is designed to accommodate the diverse startup and response times of generation resources, and offer participants the possibility to hedge against fluctuations in prices.

However, despite its advantages, this two settlement structure can introduce inefficiencies into the market. One common problem is the strategic price manipulation by LSEs, particularly those holding significant market shares at specific grid nodes. These large LSEs might alter their bid quantities in the DA market to lower DA prices (see, e.g., \citealt[pages 115 and 121]{isone2004coldsnap}, and \citealt{jha2023can}). This tactic, while advantageous for the manipulating LSEs, is considered ``extremely detrimental'' to the overall market's long-term vitality as it hinders other participants' capacity to effectively hedge risks (\citealt[page 26]{pjm2015virtual}).

{\it Virtual trading}  was 
implemented in electricity markets to mitigate market inefficiencies, specifically the disparity between DA and RT prices~(see, e.g., \citealt{pjm2015virtual} and \citealt{hogan2016virtual}).  Virtual bids, purely financial in nature, allow participants to assume financial positions in the DA market, which are subsequently cash settled or reversed in the RT market. The term ``virtual" indicates that no physical delivery or consumption of electricity is required. There are two major types of virtual bids: increment (INC) offers and decrement (DEC) bids.\footnote{Electricity markets also allow Up-to-Congestion transactions. This transaction is economically equivalent to INC and DEC bids submitted simultaneously at two different power grid nodes.} INC bids involve a virtual sale of electricity in the DA market, followed by a virtual purchase of the same amount in the RT market. Hence, the INC bidder competes with the suppliers in the DA market.
In contrast, a DEC bidder competes with LSEs by virtually purchasing electricity in the DA market and then selling this amount in the RT market.
The importance of virtual bidding in mitigating market power, especially that of LSEs, has been highlighted by PJM, an independent system operator (ISO) that coordinates the movement of wholesale electricity for 13 states and the District of Columbia. In their report (see \citealt[page 31]{pjm2015virtual}), they write:
\begin{quote}
``\textit{The opportunities for INCs to mitigate supply-side market power are few, whereas the market design features of the Day-Ahead Market that result in persistent underbidding of load in the Day-Ahead Market provide opportunities for DEC bids to mitigate demand side market power much more often. This is likely a primary driver in the reason cleared DEC volume exceeds cleared INC volume over the previous three planning years by a ratio of 1.5 to 1.}"\looseness=-1
\end{quote}
%In fact, virtual trading is more likely to mitigate the market power of LSEs as highlighted by PJM, a grid operator that coordinates the movement of wholesale electricity for 13 states and the District of Columbia, which note in their report (see \citealt[page 31]{pjm2015virtual}):

{
%Despite the widespread use of virtual trading in U.S. electricity markets, a comprehensive theoretical analysis of its effects is lacking in the literature. In particular, 
Empirical evidence indicates that virtual trading has been effective in narrowing the price gap between DA and RT markets \citep{jha2023can,li2015efficiency}. Despite this, a detailed theoretical examination of the mechanisms leading to this price convergence is lacking. We introduce the first comprehensive framework aimed at explaining how virtual bidding contributes to narrowing the price differences between the DA and RT markets, as well as to aligning the trading quantities within these markets.  Our model accounts for the growing penetration of wind and solar power—now supplying 30\% of California’s electricity \citep{CEC2022total}—by explicitly incorporating renewable energy suppliers and their influence on market dynamics. With the flexibility to adjust output in response to weather conditions, renewable providers hold strategic advantages over traditional fossil fuel suppliers, influencing their market participation and behavior.

\looseness=-1

%although empirical studies suggest that virtual trading facilitates the convergence between the DA and RT prices \citep{jha2023can,li2015efficiency}. 
%Despite the empirical and anecdotal evidence, a comprehensive theoretical framework capable of explaining these phenomena is still lacking in the literature.
%Nevertheless, a comprehensive framework which can explain the mechanism through which this convergence can be attained is still lacking.
%Moreover, with an increasing penetration of wind and solar power into the grid, renewable suppliers
%are gaining significant market shares in recent years. Compared to conventional suppliers (e.g.,
%thermal plants using fossil fuels), these suppliers possess more flexibility to behave strategically
%due to their random renewable capacities. 

We model the two-settlement market as a two stage game involving renewable and conventional suppliers, load serving entities, and virtual traders.\footnote{Our model is stylized, and in practice, markets are subject to numerous ad hoc adjustments and interventions. For example, bilateral agreements between generators, loads, and reserves are beyond the scope of this study and are therefore omitted.} In the first stage, which we refer to as the {\it DA market stage}, both demand and the capacity of renewable suppliers are uncertain. However, the probability distributions governing these uncertainties are assumed to be common knowledge among all participants.\footnote{In practice, electricity markets have 24 hourly DA markets per day. Participants submit bids for each hour, and repeat this decision making process 24 times during the day. We consider one of these periods, consistently with standard modeling approaches in the literature (see, e.g., \citealt{sunar2019strategic} and references therein).} 
% The 
% participants
% submit supply and demand bids to simulate the 
% dynamics. f
The participants submit supply and demand bids, anticipating that they will be able to adjust their bids in the second stage, i.e., {\it RT market stage}, when the realized demand and capacity becomes common knowledge.\footnote{At the commencement of the RT market, the estimates for demand and capacities within the RT market period are highly accurate. In our model, we make the assumption that these quantities are perfectly known.}
% conditions. 
% This bid revision process is designed to mirror the
%   strategic decision-making and adjustments that market participants
%   undertake as they transition from the DA market's forecasting and
%   planning phase to the actual conditions encountered in the RT market.  
% Participants are assumed to know the actual demand and renewable
% capacities when making adjustments. Conversely, the DA market in our
% model, dealing with a longer forecasting horizon, is characterized by
% uncertainty in demand and capacities.
%Participants bid for supply and demand within this single RT market across both DA and BA periods. 
%The BA period, operating closer to the actual commitment time, is assumed
%to have accurate estimates of demand and renewable
%capacities. Conversely, the DA market in our model, dealing with a longer
%forecasting horizon, is characterized by uncertainty in demand and
%capacities.

In our model, both renewable and conventional suppliers submit supply
function bids (referred to as cost curves in industry parlance)
both in the DA and RT markets. These bids specify the
quantities they seek to produce as a function of the market price.  

Renewable suppliers—whose capacities are subject to weather-driven uncertainty—have the opportunity to revise their supply functions in the RT market. In contrast, the DA supply functions submitted by conventional suppliers are fully observable to all market participants and are treated as binding. As a result, conventional suppliers are not permitted to revise their bids in the RT market. %  This transparent observability
% reflects reality as
The assumption reflects the fact that the  
ISO can readily ascertain the marginal costs and
capacities of the conventional suppliers using historical data % analysis
(see, e.g., \citealt[page
30]{NYISO_RLM}). %  \textcolor{blue}{Consequently, conventional generators,
  % being inherently less strategic than renewable generators due to this
  % transparency, are assumed to be nonstrategic in our model.} 
In addition,
we assume that the conventional supplier is uncapacitated, consistent with data from the California Independent System Operator
(CAISO) on power outages (see, e.g., \citealt[page 2]{CAISO_Outage}).\looseness = -1
%our model presumes that the aggregated

%Renewable suppliers and the LSE utilize the DA market to hedge against uncertainties in market prices. %Renewable suppliers face uncertainty stemming from their variable capacity, whereas the LSE deals with fluctuations in demand, which are random. 
Virtual bidders participate exclusively in the DA market. If they expect the RT market price to fall below the DA market price, they take a short position by submitting a supply function. Conversely, if they anticipate the RT market price will exceed the DA price, they take a long position by submitting a negative supply function. In both cases, their bids are settled at RT market prices.

%We investigate the effects of virtual trading in the presence and absence of renewable suppliers.
%analyze the two stage game in a backward manner, first establishing the
%existence and uniqueness of the equilibrium in the BA period, and then
%analyzing the equilibrium in the DA market given the equilibrium of the
%BA period.

We characterize the {\it supply function equilibria} (SFEs) in the DA and RT markets. Our equilibrium analysis reveals that, in the absence of virtual trading and renewable suppliers, LSE strategically bids to push the price in the DA market below the expected RT price. By strategically underbidding—contrary to the truthful bidding expected by the ISO—LSEs can procure part of their demand at prices below those implied by truthful bids, effectively shifting surplus from conventional suppliers to themselves. We show that the presence of virtual trading in the market reduces the price gap between the two markets, and leads to a {\it full alignment} of DA and RT prices in the limit when there is a large number of virtual traders. 
{By contrast, without virtual bidders, the presence of strategic renewable suppliers alone is insufficient to eliminate the LSE’s market power: the DA price remains below the expected RT price. Perfect competition among virtual traders closes the gap even when strategic renewable suppliers are active.}
 
While virtual trading facilitates convergence between the DA and RT prices, it does \emph{not} imply that 
there is quantity alignment, i.e., the demand cleared 
in the DA market need \emph{not} match
the true expected demand. With virtual bidders present, the cleared DA demand is {\it lower} than in the absence of virtual bidding. Once DA and RT prices align, the LSE becomes indifferent between sourcing electricity in either market and reduces its DA bid to avoid over-reserving relative to actual demand.
In this case, virtual traders offset the LSE’s shortfall in the DA market through DEC bids.\looseness = -1
 
% In such case, the LSE is less concerned
% about the risks of purchasing at high prices in the RT market, as it
% accounts for the fact that virtual (DEC) bidders can compensate for the
% shortfall of the demand reservation in the DA market. 
 % While with perfect competition between virtual
% traders, DA and RT prices become aligned, the demand quantities are not
% because the LSE's DA demand remains lower than in the scenario without
% virtual trading.  
%However, compared to a market consisting of only conventional suppliers on the supply side, the LSE's DA demand increases because renewable suppliers can provide electricity at competitive DA market rates. This allows the LSE to redirect their demand from the RT market to the DA market, thereby reducing their total procurement costs. \looseness = -1
%TO BE MODIFIED
%has an impact on the quantity of electricity cleared in the DA market. Specifically, when virtual trading is absent, the LSE decreases its demand bid in the DA market once renewable suppliers are introduced. Analogously, in the presence of virtual trading, the LSE's demand bid is less than the scenario without renewable supplier. %{\color{red}[need more intuition on why this is the case]}

We provide empirical support for some of our model implications using historical market data from the California ISO (CAISO) and the
New York ISO (NYISO),
leveraging the availability of data from periods both before and after the
implementation of virtual trading. 
Consistent with the
theoretical implications of the model, our findings reveal that virtual
trading narrows the price gap between the DA and RT markets in both CAISO
and NYISO.  
The average price gap experiences significant reductions throughout the
day, with CAISO and NYISO observing average decreases of 313\% and 240\%
respectively.  
Additionally, virtual trading leads LSEs to reduce their bid quantities in the DA market, lowering the average cleared demand across all hours of the day by $25\%$ in the DA market and $33\%$ in the RT market.

%{In addition to the comparative analysis, we also calibrate our model using the above two data sets and simulate an increasing volume of renewable supply at the equilibrium. The numerical results show that the increasing renewable supply decreases the LSE's DA demand.} 

\subsection{Literature Review}
\label{sc:Literature}
Our work contributes to the body of literature on virtual trading in the
electricity market. Previous studies, such as \cite{mather2017virtual} and
\cite{tang2016model}, have demonstrated that
as the number of virtual traders increases,  the quantity of electricity
cleared in the DA market approaches the socially optimal DA schedule. %Our
\cite{mather2017virtual} focus on conventional suppliers, virtual traders,
and the ISO, with the latter two acting
strategically. \cite{tang2016model} consider strategic virtual traders,
assuming that suppliers and LSEs submit truthful bids for their supply and
demand offers and attribute price discrepancies to estimation
errors. Unlike these studies, in our model renewable suppliers, LSEs, and virtual traders are all strategic agents,
and the LSE's market power is identified as the primary factor behind the
deviation of DA from RT prices. Moreover, we show that while virtual trading aligns prices, it can also cause quantity misalignment by reducing the LSE’s exposure to overbidding risk. In contrast to \citet{mather2017virtual} and \citet{tang2016model}, who model the electricity market as a quantity game, we develop a supply function equilibrium framework that endogenizes both prices and quantities. Rather than imposing quadratic supply functions for conventional suppliers, we derive them endogenously.

{A related study by 
\citet{you2019role} finds that virtual trading can eliminate DA–RT price discrepancies with sufficient traders. %, and highlight potential quantity misalignment. 
Our work extends theirs by embedding the analysis in a broader framework that incorporates stochastic demand and strategic renewable suppliers with random capacities, alongside a strategic LSE. In contrast, \citet{you2019role} assume deterministic demand and model only the LSE as strategic. We further adopt a supply function equilibrium framework that endogenously derives supply functions, rather than a quantity competition model with quadratic costs. This richer setting illustrates the mechanism through which virtual trading can induce strategic quantity misalignment between DA bids and true demand. Table~\ref{tab:vt_lit_comprs} summarizes how our work relates to the existing literature on virtual trading.} 
\begingroup
\setlength{\tabcolsep}{2pt} % Default value: 6pt
\renewcommand{\arraystretch}{1} % Default value: 1
\begin{table}[h!]
\begin{tabular}{ccccccc}
\hline
Paper\textbackslash Feature & Supply Function & General & Random & Renewable & Strategic & Capacity \\
& Equilibrium & Function Form & Demand & Supplier & LSE & Constraint \\ \hline
\cite{mather2017virtual}  & \xmark & \xmark & \cmark & \xmark & \xmark & \xmark \\
\cite{you2019role}        & \xmark & \xmark & \xmark & \xmark & \cmark & \xmark \\
\cite{tang2016model}      & \xmark & \xmark & \xmark & \xmark & \xmark & \xmark \\
Our paper     & \cmark & \cmark & \cmark & \cmark & \cmark & \cmark \\ \hline
\end{tabular}
\caption{Position of our paper with respect to literature on virtual trading}
\label{tab:vt_lit_comprs}
\end{table}
\endgroup

A few studies have analyzed virtual trading from an  empirical
perspective. \cite{jha2023can} examine the DA and RT prices from
California ISO before and after the implementation of virtual trading in
2011. They empirically demonstrate that virtual trading narrows the price
discrepancy between these two markets. They also find that virtual trading
effectively reduces trading costs in the electricity market and improves
operating efficiency.  
\cite{li2015efficiency} assess the performance of virtual traders' trading
strategies in the California electric power market. Their work shows that
virtual trading has become less profitable since its initiation, proving
its effectiveness in the electricity market. \cite{birge2018limits}
investigate the limit of virtual trading, noting that financial
institutions may use it to manipulate market prices via betting in the
opposite direction of the pricing gap. As a result, they may incur losses
in virtual trading, which are hedged through positions in  other
derivatives, such as financial transmission
rights. \cite{HOPKINS2020104818} finds that financial institutions can
also use congestion revenue rights, another kind of financial derivative,
to achieve similar effects. Our work provides a theoretical interpretation
of the results in \cite{jha2023can} and \cite{li2015efficiency}.\looseness
=-1\footnote{A separate stream f literature has studied energy commodity
  trading \citep{wu2005competitive,
    secomandi2014optimal,trivella2023meeting,
    dong2007equilibrium,mendelson2007strategic,pei2011sourcing}. These
  studies do not examine the effects of virtual trading and the
  integration of renewable energy suppliers on price dynamics as well as
  DA demand of LSEs in the electricity market.} 
 %We do not examine the downside of virtual trading but leave it as future work.

Our paper also contributes to the stream of research on renewable energy markets. \cite{peura2021renewable} 
investigate the impact of wind power generation on electricity prices. %,
%discovering that the effect of wind power on prices can vary, leading to
%either an increase or decrease, contingent upon its market
%capacity. 
\cite{sunar2019strategic}  
examine the strategic commitments of renewable energy suppliers to their production schedules. %They underline that elevating the penalty rate for underproduction can incentivize a greater commitment to renewable energy production. However, this approach may have the unintended consequence of diminishing the overall reliability of the power grid.  
\cite{al2017understanding} explore the effects of different levels of production flexibility on supply function competition and market prices.\footnote{ Other related studies include \cite{wu2013curtailing, zhou2019managing, sunar2021net,aflaki2017strategic, hu2015capacity, murali2015municipal, kok2018impact, lobel2011consumer}, which  analyze the optimal renewable energy operations and investments.}
In contrast, our paper focuses on how renewable suppliers influence the LSE’s bidding behavior in the DA market in the presence of virtual traders.

On the methodological side, we contribute to the supply function equilibrium literature. \cite{klemperer1989supply} introduce a supply function competition model with demand uncertainty and convex production cost. \cite{johari2011parameterized} study the SFE in a deterministic context. They find that restricting the strategy space of participants to choose their supply functions can improve market efficiency.
\cite{green1992competition} and \cite{green1996increasing} apply SFE models in electricity markets, considering a deterministic but time dependent demand. %We employ a similar demand setting in our paper.  
\cite{holmberg2007supply,holmberg2008unique} studies supply function equilibrium with symmetric and asymmetric capacity constraints. %Our analysis for the RT market differs from this work in the sense that our analysis does not rely on a price cap. %Besides, the way of deciding the constant in our solution that guarantees the monotonicity and continuity of the supply function is also different from his work.  
%\cite{sunar2019strategic} apply the SFE model to a two settlement electricity market with renewable suppliers.
Our paper differs from these studies in that we analyze SFE in a two
stage game, featuring the interaction of DA and RT markets. % As far as we know, this is new to the literature.
Table \ref{tab:sfe_lit_comprs} summarizes existing literature on supply function equilibrium models.
\begingroup
\begin{table}[h!]
  \centering
  \scriptsize
  \begin{tabular}{l||c|c|c|c|c|c}
    \hline
    Paper\textbackslash Feature & Two stage & General & Random & Renewable & Capacity & Time varying \\
                                & SFE & Function Form & Demand & Supplier & Constraint & Demand \\ \hline
    \cite{klemperer1989supply}                 & \xmark & \xmark & \cmark & \xmark & \xmark & \xmark \\
    \cite{johari2011parameterized}             & \xmark & \xmark & \xmark & \xmark & \xmark & \xmark \\
    \cite{holmberg2007supply, holmberg2008unique} & \xmark & \cmark &
                                                                      \cmark & \xmark & \cmark & \xmark \\ 
    \cite{sunar2019strategic}                  & \xmark & \cmark & \cmark & \cmark & \xmark & \xmark \\
    \cite{green1992competition} & \xmark & \cmark & \xmark & \xmark & \xmark & \cmark \\
    Our paper  & \cmark & \cmark & \cmark & \cmark & \cmark & \cmark \\ \hline 
\end{tabular}
\caption{% The relation of our paper to
  Comparison with
  existing literature % that employs
  on supply function equilibrium models.}
\label{tab:sfe_lit_comprs}
\end{table}
\endgroup

%Our work is potentially related to forward contract hedging as the two settlement electricity market is a natural application of it. The hedging literature is motivated by evidence for forward premia in power markets \citep{longstaff2004electricity,hadsell2008day,bowden2009day,redl2013determinants}, viewing forward trading as a risk-sharing mechanism between risk-averse market participants \citep{bessembinder2002equilibrium,siddiqui2003managing,aid2011hedging}. 

Lastly, our study is related to studies on energy commodity trading as explored by \citep{wu2005competitive, secomandi2014optimal,trivella2023meeting, dong2007equilibrium,mendelson2007strategic,pei2011sourcing}. Our unique contribution is the investigation of how virtual trading and  renewable energy suppliers influence price dynamics and DA demand of LSEs.\looseness=-1

The rest of this paper is organized as follows. We provide institutional details on the U.S. multi-settlement electricity market in \S\ref{sec:institdetails}. We develop the model in \S\ref{sc:Model} and construct the supply function equilibrium in  \S\ref{sc:Analysis}. We analyze the impact of virtual traders on the two settlement market in \S\ref{sc:Eff_VT}. We provide empirical support to our main theoretical predictions using historical ISO data in  \S\ref{sc:NumStudy}. We conclude in \S\ref{sc:Conclusion}. All derivations and proofs are delegated to appendices.

%Finally, our work also broadly relates to work on energy commodity trading \citep{wu2005competitive, secomandi2014optimal,trivella2023meeting, dong2007equilibrium,mendelson2007strategic,pei2011sourcing}. Our work contributes to this stream of literature as it provides a study on how virtual trading and the inclusion of renewable suppliers influence the prices and LSE's DA demand in the electricity market.

%===============
\section{Institutional Details} \label{sec:institdetails}
The multi-settlement electricity market in the U.S. typically operates as follows. In the DA market, which takes place one day prior to the RT market\footnote{The DA market covers an entire day, segmented into individual hours, whereas each execution of the RT market covers a shorter time segment; typically five minutes.}, participants submit their supply and demand bids. These bids, also known as supply and demand curves, specify the quantities and prices they are willing to trade. An ISO collects this data and sets market clearing prices by matching supply and demand. Hence, suppliers and LSEs effectively enter into forward contracts. Within these contracts, LSEs commit to consuming specified quantities of electricity, while suppliers commit to delivering these agreed-upon amounts at predetermined market prices for the next day. \looseness=-1

Between the DA and RT markets, suppliers can adjust their offers to reflect updated conditions, submitting new supply curves based on residual capacity—estimated after meeting DA commitments. This adjustment window is typically close to RT operations; for example, in California it closes 75 minutes before the RT market begins.\footnote{See \url{http://www.caiso.com/market/Pages/MarketProcesses.aspx} for details on California’s market procedures.}

%\textcolor{red}{We have to be careful here.  The same manual indicates that the generator dispatch is not done until 15-5 minutes before the actual deliver.  This "dispatch" is actually the RT linear program which furnishes the dual variables that are used as location-based prices.  So the uncertainty is not resolved until that point -- because the LP uses real-time loads as inputs.}
The RT market begins after the adjustment period closes. At this stage,
actual demand and available capacities are realized, but participants must
honor their DA market commitments. If actual demand exceeds the DA-cleared
quantity, LSEs meet the shortfall by purchasing additional electricity in
the RT market. Conversely, renewable suppliers with surplus capacity can
offer extra supply in the RT market. Suppliers unable to fulfill their DA
obligations—due to capacity shortfalls—must procure power in the RT
market, potentially incurring penalties. The ISO allocates residual demand
among suppliers, curtails excess supply in the event of a shortfall, and
clears the RT market at prices determined by submitted curves and
contracted quantities from both the DA market and the adjustment period.

\section{Model}
\label{sc:Model}
We study a two-settlement electricity market with day-ahead (DA) and
real-time (RT) trading for consumption over a finite time interval $\mathcal{T}$. %{an interval~$\mathcal{T}
%= [0,T]$}.
The market participants include an aggregated LSE, an aggregated conventional supplier, a set $\mcal{S}:= \{1,..., N_S\}$ of $N_S~(\ge 2)$ renewable suppliers, and a set $\mcal{V}:= \{1, \ldots, N_V\}$ of $N_V~(\geq2)$ virtual traders.
All market participants know the distribution of the demand and renewable capacities when submitting their bids in the DA market.
They subsequently refine their bids in the RT market after observing the realized demand and capacities. 
During the consumption period $\mathcal{T}$, physical exchange of electricity takes place based on the contracts entered in both the DA and RT markets.\looseness =-1  

Demand $\{\tilde{D}(t): t \in \bigT\}$ is assumed to be a bounded  continuous stochastic process over time~$t$, where $\tilde{D}(t) \in [\underline{D},\overline{D}]$ for known constants $\overline{D} > \underline{D}\geq 0$.
In the DA market, all participants only know the distribution of the stochastic process $\{\tilde{D}(t): t\in\mathcal{T}\}$ together with the upper and lower bounds.
The realized demand $\{D(t): t\in \mcal{T}\}$ becomes known before the participants bid in the RT market~\citep{green1992competition, green1996increasing}.

% Let $Q_{i}(t)$ represent the realized capacity of the $i$-th renewable
% energy supplier at time $t\in\mathcal{T}$. 
We assume the capacity $\{\tilde{Q}_i(t): t\in\mcal{T}\}$ to be a
stochastic process that is continuous over time $t$, (weakly) stationary,
and independent and identically distributed for all $i \in \mcal{S}$.  
The range of $\tilde{Q}_i(t)$ is $[0, \overline{Q}]$, where $\overline{Q}>0$ is a known constant.
As with demand, the participants only know the distributions of $\{\tilde{Q}_i(t): i \in \mcal{S}, t\in\mcal{T}\}$ when bidding in the DA market. 
The realized values ${Q_i(t): i \in \mathcal{S},, t \in \mathcal{T}}$ become known before bids are updated in the RT market.
We further assume that the conventional supplier faces no capacity constraints—a reasonable assumption given that, as noted earlier, blackouts are rare events
\citep[page 2]{CAISO_Outage}. 
\begin{figure}[h!]
\centering
\includegraphics[scale=0.14]{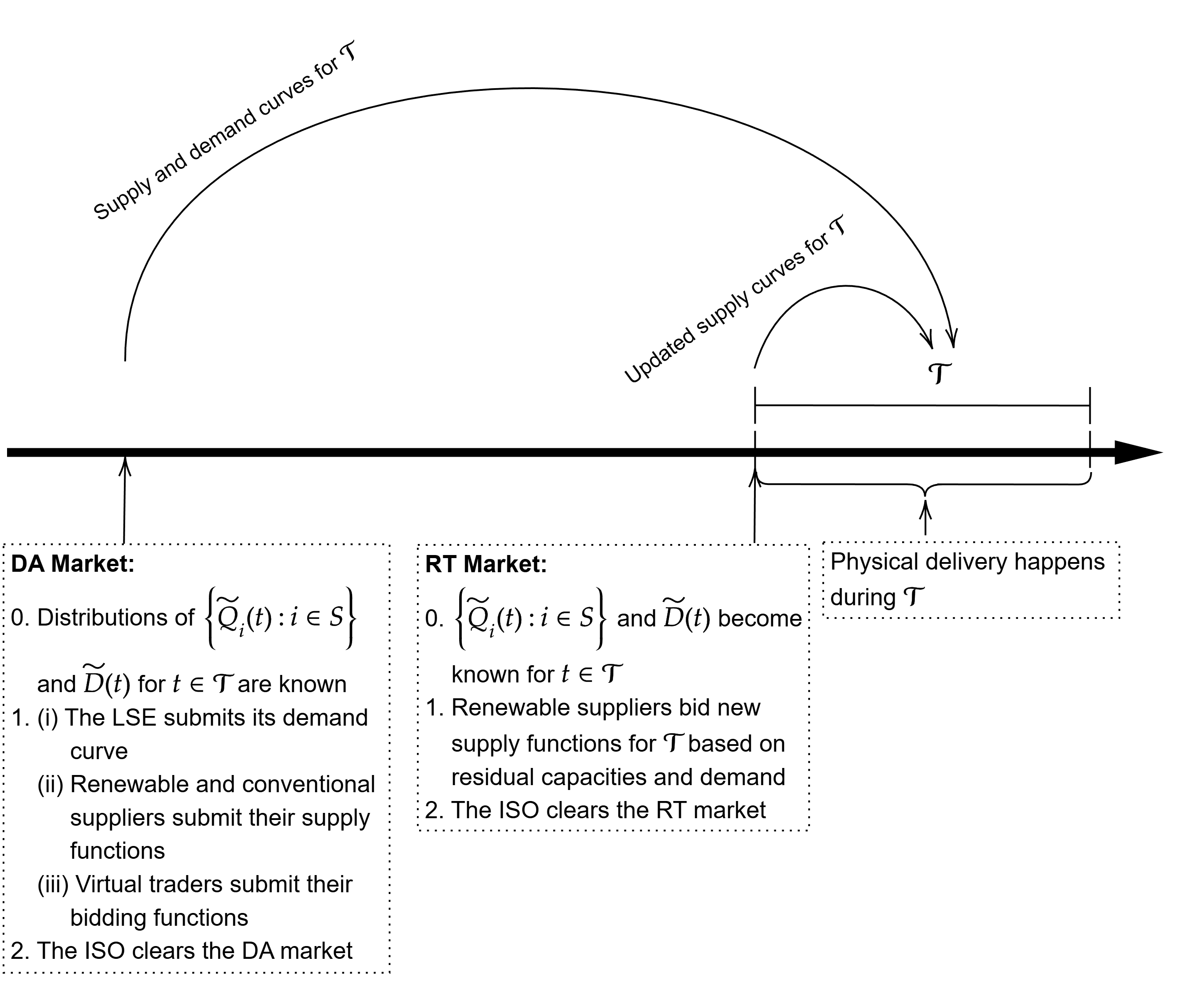}
\caption{Model Timeline}
\label{fig:timeline}
\end{figure}
% Similar to demand dynamics, the value of $Q_{i,t}$ for each renewable
% supplier is determined and disclosed to all participants in the RT
% market. In the DA market, these capacity values remain unknown because
% they depend on the next day's variable weather conditions, including
% wind velocity and solar intensity. Consequently, in the DA market, we
% model $Q_{i,t}$ as a random variable with support $[0,\overline{Q}]$,
 % where $\overline{Q}>0$ is a known constant. In addition, we assume that
% $Q_{i,t}$ for $i\in\mathcal{I}$ in the DA market are i.i.d., with the
% distribution known to all participants. Moreover, $Q_{i,t}$ for
% $i\in\mathcal{I}$ is assumed to be independent of $D_t$, as commonly
% done in the literature (see, e.g., \citealt{sunar2019strategic} and
% references therein). The corresponding stochastic process
% $\{Q_{i,t}\}_{t\in\mathcal{T}}$ in the DA market is assumed to be
% continuous and (weakly) stationary.   

Participants engage in the DA and RT markets as follows {(see
Figure~\ref{fig:timeline} for a schematic representation of the timeline)}.
In the DA market, the LSE submits a continuous demand curve $D_l :=\{D_l(t): t \in \mcal{T}\}$. 
This curve indicates the LSE's planned electricity purchase to meet the demand requirements throughout the time period $\mathcal{T}$.  
If $D(t) > D_{l}(t)$ for some $t \in \mcal{T}$, the LSE balances the shortfall by purchasing additional electricity from the RT market at the prevailing RT price. 
Both DA and RT demand from the LSE is treated as inelastic to reflect the common LSE practice of selling electricity to retailers at fixed prices through short-term contracts (see, e.g., \citealt{csereklyei2020price, burke2018price}).\footnote{We assume the LSE's demand to be inelastic, and therefore independent of prices, as commonly done in literature (see, e.g., \citealt{johari2011parameterized, holmberg2007supply, holmberg2009numerical,xu2015demand} and references therein). In practice, LSEs are allowed to submit elastic demand curves, which are functions of both price and time.}$^{,}$\footnote{Because of the inelastic nature of demand, each LSE's demand is expected to be fulfilled without competitive interactions among them, and thus we model the LSEs collectively as a single entity.} 
Renewable and conventional suppliers submit their supply functions in both markets.
Virtual traders only participate in the DA market by submitting their bidding functions. 
The ISO determines market clearing prices for the DA and RT markets
separately, ensuring that supply and demand are balanced within each
period.

%if its capacity $\tilde{Q}$ is larger than $S_i(p_f^*)$. Analogously, the conventional supplier commits to $C(p_f^*)$ and then submits $\bar{C}(\cdot)$ for the extra demand. 
%Renewable suppliers that fail to fulfil their commitments in the DA market will pay penalties. 
%The BA period, denoted as $T$, is partitioned into $\tau$ RT markets. In
%each such period, the renewable supplier. In each $T_i$, 
%and use the notation $\tilde{D}(\epsilon_D)$ to represent the entire RT
%demand curve over the time period $\mathcal{T}$.  
%In the DA market, the LSE submits a demand curve, denoted as
%$D_{l}:\mathcal{T}\rightarrow [0,\overline{D}]$. This curve indicates the
%quantity of electricity that the LSE decides to purchase from the DA 
%market to fulfill demands realized at the same time of the BA period.

\subsection{DA Market}\label{sec:DAMarket}
Let $S_i: \mathbb{R}^{+}\rightarrow\mathbb{R}^{+}$ denote the DA supply function of supplier
$i\in \bigS$, where $\mathbb{R}^+:=[0,+\infty)$. For a given DA price $p_f(t) \in \mathbb{R}^+$ at time $t \in \mathcal{T}$, $S_i(p_f(t))$ denotes the quantity of electricity that supplier $i$ is willing to provide.\footnote{We do not consider negative prices in this paper, although it may occur in real markets.} In our setting, the $i$-th renewable supplier submits a single supply function $S_i$ for the entire consumption period. 
%Despite the demand satisfies $\tilde{D}(t) \in [\underline{D},\overline{D}]$, we require 
The supply function starts at the minimum supply level mandated by ISO
guidelines (see, e.g., \citealp[page 59]{CAISO_BPM}). Given the negligible
production costs of renewable suppliers, we set $S(0) = 0$.
% and $S'(0) = 0$.
We also assume that $S_i$ is differentiable and monotonically increasing in price, consistent with standard assumptions in the literature \citep{klemperer1989supply,green1992competition,holmberg2010supply,sunar2019strategic}. 
%until it reaches the maximum expected capacity
%\begin{equation}
%\label{DA:CapacityConstr}
%S_i(p_f(t)) \leq \max\{\mathbb{E}[\tilde{Q}_i(t)]: t\in\mathcal{T}\}, \forall p_f(t)\in\mathbb{R}^+.
%\end{equation}
%Once this upper bound is attained, the supply function $S_i$ remains constant.}
In addition, since the capacity distribution is assumed to be symmetric across renewable suppliers, we posit the following equations for the supply functions:
\begin{equation}
\label{DA:Symmetry}
S_1=S_2=...=S_{N_S}=S.
\end{equation}
%\hl{Is the supply function differentiable everywhere, or almost everywhere, i.e. we allow for the function to be non-differentiable on a measure zero set? The RT supply function is continuous and differentiable almost everywhere but no differentiable.}

The supply function $C:\mathbb{R}^{+}\rightarrow\mathbb{R}^{+}$ of the conventional supplier is known to every participant, including the ISO (reflecting existing practices, e.g., \citealp[page 30]{NYISO_RLM}). 
We denote by  $\underline{p}_c$ the minimum price the conventional supplier is willing to supply, i.e., $C(p)=0$ for $p\in[0,\underline{p}_c]$. 
For $p>\underline{p}_c$, we assume that $C$ is differentiable, (weakly) concave, and (strictly) increasing in price, in line with existing literature.
The concavity of the supply function implies diminishing supply rates (or equivalently, an increasing marginal cost) of the conventional supplier as the price increases.\footnote{We do not consider the minimum production quantity of the conventional supplier. 
This quantity is typically set to reflect substantial operational costs associated with production resumption in practice. 
In recent years, the minimum production quantity has been decreasing (see, e.g., \citealt{denholm2011grid}, and \citealt[page 2]{CAISO_Curtail}) due to the
ISO's efforts to incorporate more clean energy into the grid.}

The virtual bidder $v \in \bigV$ participates in the DA market by submitting a differentiable and (strictly) decreasing bid function $B_v: \mathbb{R}^{+}\rightarrow\mathbb{R}^{+}$. We interpret $B_v$ as the quantity of electricity that the virtual trader is willing to buy (i.e., take a long position) at a given price when acting as a DEC bidder. For an INC bidder, we interpret $-B_v$ as the quantity the trader is willing to sell (i.e., take a short position) in the DA market. Virtual traders do not participate in the RT market; instead, their positions will be automatically cash settled at the RT price. We consider the net effect of virtual trading, assuming that all virtual traders are either INC or DEC bidders. 
For clarity of exposition, we model them as DEC bidders; analogous results hold when all virtual traders are INC bidders.\looseness=-1
%{\color{blue}In addition, we put a upper bound, denoted as $\bar{B}$, on the bidding quantity of each DEC bidder, as mandidated by the ISO, i.e., $B_v(0)=\bar{B}$ for $v\in\mathcal{V}$.} 

The ISO clears the DA market by choosing a price $p_{f}^*(t)$ that equates the total supply from the renewable and conventional suppliers to the demand from the LSE and virtual DEC traders for time $t\in\mathcal{T}$, i.e.,\footnote{Similar market clearing conditions have been widely used in the literature (see the studies of \citealt{klemperer1989supply,green1992competition, anderson2002using, sunar2019strategic} and references therein).}
\begin{equation}
\label{DA:MarketClearing}
\sum_{i \in \bigS}S_i\big(p_{f}^*(t)\big)+C(p_{f}^*(t))=D_{l}(t) + \sum_{v\in\mathcal{V}}B_v\big(p_{f}^*(t)\big).
\end{equation}
The market clearing process sets up a forward contract among market participants. 
It gives the LSE the right (and the obligation) to receive $D_l(t)$ units of electricity at the predetermined price $p_{f}^*(t)$ to fulfill the demand $D(t)$ at time $t \in \bigT$. 
The virtual DEC bidders purchase $\sum_{v\in\mathcal{V}}B_v\big(p_{f}^*(t)\big)$ units at this price, anticipating to resell them at the RT price $p^*_{s}(t)$ for $t \in \bigT$.
Meanwhile, renewable suppliers (resp. conventional supplier) commit to delivering $\sum_{i\in\bigS}S_i(p_{f}^*(t))$ (resp. $C(p_{f}^*(t))$) units of electricity at time $t\in\mathcal{T}$.

\subsection{RT Market}
In the RT market, the realized values of the demand $D(t)$ and the
capacity $Q_{i}(t)$ for $t\in\mathcal{T}$ become known to all
participants.  
Let 
$D_r(t):=D(t)-\sum_{i\in\mathcal{S}}\min\{S_i\big(p_{f}^*(t)\big),Q_i(t)\}-C\big(p_{f}^*(t)\big)$
denote the residual demand for $t\in\mathcal{T}$, where the minimization
reflects the fact that the actual renewable supply is subject to the
capacity. Define $\underline{D}_r:=\min_{t\in\mathcal{T}}\{D_r(t)\}$ and
$\overline{D}_r:=\max_{t\in\mathcal{T}}\{D_{r}(t)\}$, respectively. 
%Let $S_c(t) := \sum_{i\in\mathcal{S}}\min\big\{S_i\big(p_{f}^*(t)\big),Q_{i}(t)\big\}+C\big(p_{f}^*(t)\big)$ be the realized value of the DA supply commitment for time $t\in\mathcal{T}$. 
Then based on these realized values, we need to consider the following two
different cases in the RT market.\looseness =-1 
%The residual demand at time $t$ in the BA period is $(D(t)-\sum_{i=1}^{N_S}S_i(p_f^*(t))-C(p_f^*(t)))^{+}$ where the plus sign indicates the positive part of the parenthetical term.
%The LSE needs to purchase additional electricity from the BA period to fulfill positive residual demands. In response, the suppliers bid another supply function to show their willingness to meet the excess demand. 
%In practice, the renewable suppliers can adjust their supply curves hourly in the BA period. 
%Based on these supply curves, the ISO determines a spot price and markets new demands to available suppliers every five minutes (in some special cases this happens every minute). 
%We partition the BA period duration $\mathcal{T}$ into $T$ identical periods to capture this feature and refer to each such period as $\tau_j$, i.e., $\mathcal{T}=\bigcup_{j=1}^{T}\tau_j$. In each $\tau_j$, the BA period has two different cases. 
%In practice, suppliers are allowed to adjust their supply curves on an an hourly basis in the BA period. To capture this feature, we partition the time interval $\mathcal{T}$ into $M$ identical periods and denote them as $\tau_m,\ m\in\{1,...,M\}$. Then suppliers in the BA period submit their supply functions for each $\tau_m$ of the RT market. We will focus on one period in the following discussion, as the analysis of the BA period for each $\tau_m$ is similar. 

\noindent{\textbf{Case 1: $D_{r}(t)\leq 0$}}
\label{Case1}\\
\noindent 
When the residual demand is non-positive, the LSE
does not need to purchase additional electricity from the RT market, as the supply reserved in the DA market exceeds the actual demand for time $t\in\mathcal{T}$. 
In such case, the ISO reduces the excess supply to align with the actual demand, typically by curtailing the conventional supplier first, given its  
higher production costs compared to renewable suppliers. 
The LSE is not eligible for refunds for any surplus reserved in the DA market since these funds have been used to prepare generator resources. Suppliers can produce less than their DA commitments without facing penalties, as the underproduction does not incur additional costs. 
%On the renewable supply side, there are two possible scenarios in this setting: 
%\begin{enumerate}[(i)]
%\item
%  Realized~DA commitment surpasses the realized demand, i.e., $S_c(t) \geq D(t)$: 
%  The renewable suppliers can produce less than their DA commitments without facing penalties, as the underproduction does not incur additional costs.  
%\item
%  Realized value of the DA commitment falls below the realized demand, i.e., $S_c(t) < D(t)$: 
%  The ISO penalizes renewable suppliers that fail to meet their DA commitments. 
%  The penalties are used to procure additional generator resources to fulfill the unmet demand. 
%\end{enumerate} 
% In Case 1, there
% are two possible cases in terms of renewable suppliers:
%In Case 1, the renewable suppliers may confront two possible circumstances.  the renewable supplier may produce less than their commitments without incurring penalties because the underproduction does not result in extra costs. However,  
%In this case, the spot price equals zero and there is no payout to suppliers, the LSE, and virtual traders who cash settle their positions taken in the DA market at the price zero. Therefore, our analysis will primarily focus on Case 2, where the BA period is active.

\noindent{\textbf{Case 2:} $D_{r}(t)>0$}
\label{Case3}\\
\noindent
In this case, the LSE needs to purchase electricity from the RT market to
satisfy the positive residual demand for time $t\in\mathcal{T}$. Let
$\bar{S}_i: \mathbb{R}^+ \rightarrow \mathbb{R}^+$ denote the supply function of the $i$-th supplier in the RT
market. We interpret $\bar{S}_i(p)$ as the incremental supply above
the committed supply $\min\{S_i(p_f^*(t)),Q_i(t)\}$ at price $p \in\mathbb{R}^+$. {We need to ensure that 
\[
\bar{S}_i(p) + S_i\big(p_f^*(t)\big) \leq Q_i(t), \qquad t\in\bigT.
\]
% Since the
% $i$-th renewable supplier has committed to supply
% $S_i\big(p_{f}^*(t)\big)$ units of electricity in the DA market for time
% $t\in\mathcal{T}$, its additional sale in the RT market is capped by its
% residual capacity. Suppose we denoted as $Q_i^r$ the residual capacity
% of the $i$-th renewable supplier in the RT market, 
%Renewable suppliers that are unable to fulfill their DA commitments would
%be penalized for underproduction. 
%Other renewable suppliers with surplus capacities, alongside the
%conventional supplier, submit their new supply functions.  
%Note that a renewable supplier's sale in the RT market is capped by its
%residual capacity, denoted as $Q_i^r$. Suppose we label as
%$S_i^{\text{max}}$ the maximal supply of the $i$-th renewable supplier in 
%the DA market,
% Recall that in the DA market, the $i$-th renewable supplier has already
% committed to supply $S_{i}(p_f^*(t))$ units of electricity for $t \in
% \bigT$.
Thus, the residual capacity 
% Thus, the $i$-th supplier's residual capacity
$Q_{i}^r$ that can be
offered in the RT market is given by 
\begin{equation}
  \label{RT:ResidualCapacity}
  Q_{i}^r = \min\{\max\{Q_{i}(t) - S_i\big(p^{*}_{f}(t)\big),0\}: t \in \bigT\} =
  \min\{\max\{Q_{i}(t) - S\big(p^{*}_{f}(t)\big),0\}: t \in \bigT\},
\end{equation}
where the % outer minimization % reflects the fact that renewable suppliers must
% % ensure each production level of their supply curves available as they can
% % not modify their supply functions during
% ensures that the supply function 
% $\mathcal{T}$ (otherwise, the ISO
% charges significant penalties for misrepresenting the supply curve). The
second equality in 
\eqref{RT:ResidualCapacity} holds due to \eqref{DA:Symmetry}.} {The outer minimization reflects that renewable suppliers must preserve sufficient residual capacity for every $t \in \mathcal{T}$, as they are restricted to bidding a single supply function in the real-time market.}

%ensure each production level of their supply curves available as they can not modify their supply functions during
% ensures that the supply function 
%$\mathcal{T}$.}

Since renewable suppliers have extra supply available in all periods $t\in
\bigT$, if and only if, their residual capacities are positive, we allow
the $i$-th renewable supplier to bid in the RT market only when $Q_i^r >
0$. Suppliers with zero residual capacities cannot be strategic in the RT market. Let
$\bigS_r \subseteq \bigS$ be 
the set of strategic suppliers, i.e., the set of renewable suppliers with positive residual capacities. 
A non-strategic supplier % $j$-th supplier with 
$j \in\bigS\setminus\bigS_r$  must still
fulfill its DA commitment $S_j\big(p_{f}^*(t)\big)$. If its realized
capacity $Q_{j}(t) <S_j\big(p_{f}^*(t)\big)$, the renewable supplier
incurs a shortfall penalty in that it must 
% $Q_{j}(t)$ units of electricity and 
purchase the shortfall
$S_j\big(p_{f}^*(t)\big)- Q_{j}(t)$ from the RT market at the RT market
clearing price $p_{s}^*(t)$. \looseness = -1% , with the latter interpreted as a penalty for
We assume that $\bar{S}_{i}$ is continuous, monotonically increasing, and
piecewise differentiable until it reaches the capacity bound 
\begin{equation}
\label{RT:CapacityConstr}
\bar{S}_i(p)\leq Q_{i}^r,\ \ \forall p \in\mathbb{R}^+,
\end{equation}
{Thus, it follows that $\bar{S}_i(p) = Q_i^r$ for all $p \geq \inf\{y:
  \bar{S}_i(y) \geq Q^r_i\}$, i.e. 
on reaching the upper bound, $\bar{S}_i$ becomes constant.} % The
% contraint \eqref{RT:CapacityConstr} ensures that for any RT market price
% $p_s(t)$ and time $t \in \bigT$  
% \begin{equation*}
% \bar{S}_i\big(p_{s}(t)\big) + S_i\big(p^*_{f}(t)\big) \leq Q_{i}^r + S_i\big(p^*_{f}(t)\big) \leq Q_{i}(t),
% \end{equation*}
% i.e., the total supply of the $i$-th renewable supplier is always less
% than its capacity $Q_{i}(t)$ for any price $p_{s}(t)$ and time
% $t\in\mathcal{T}$. 
%Similar to the DA supply function, the natural range of $\bar{S}_i$ is
%between $0$ and $Q_i^r+\max_{t\in\mathcal{T}}\{S_i(p_f^*(t))\}$ despite
%that the realized demand lies in $[\underline{D},\overline{D}]$, as the
%supply function should reflect the merit order of production levels.

{As in the
DA market, the ISO chooses a RT market clearing price $p_{s}^*(t)$ to balance the supply and the demand for $t\in\mathcal{T}$:
\[
  \sum_{i \in \bigS} \min\{S\big(p^*_f(t)\big), Q_i(t)\} + \sum_{i \in
    \bigS_r}\bar{S}_i\big(p_{s}^*(t)\big) + \max\{C(p_s^*(t)),
  C(p_f^*(t))\} = D(t),
\]
where the supply from the conventional supplier is set to $\max\{C(p_s^*(t)),
C(p_f^*(t))\}$. This is because the ISO is not allowed to curtail the
conventional supplier when the residual demand is positive.    
% The conventional supplier's supply function $\bar{C}:\mathbb{R}^+\rightarrow\mathbb{R}^{+}$ in the RT market is defined as follows:
% %has two properties: \looseness = -1
% %\begin{enumerate}[(i)]
% %    \item $\bar{C}(p_{s}(t))=0$, if $p_{s}(t)\in[0, p_{f}^*(t)]$;
% %    \item $\bar{C}(p_{s}(t))$ is (weakly) increasing, strictly concave, and twice differentiable, if $p_{s}(t)\in[p_{f}^*(t), +\infty)$.
% %\end{enumerate}
% \begin{equation}
% \label{RT:ConventionalSupply}
% \bar{C}(p_{s}(t))=
% \begin{cases}
% C(p_{f}^*(t)), & \text{if}\ p_{s}(t)\in[0, p_{f}^*(t)];\\
% C(p_{s}(t)), & \text{if}\ p_{s}(t)\in[p_{f}^*(t), +\infty).
% \end{cases}
% \end{equation}
% This definition reflects the fact that the conventional supplier can only
% rely on more expensive generation resources in the RT market because the
% cheaper ones have been reserved in the DA market. \looseness = -1
Rearranging terms, we get:\looseness= -1
% Analogous to the
% As in the
% DA market, the ISO chooses a RT market clearing price $p_{s}^*(t)$ to balance the supply and the demand for $t\in\mathcal{T}$
\begin{equation}
\label{RT:MarketClearing}
%\sum_{i=1}^{N_S}\bar{S}_i(p_s^*(t))+\bar{C}(p_s^*(t))=D_t - (\sum_{i=1}^{N_S}\min\{S_i(p_f^*(t)),Q_{i,t}\}+C(p_f^*(t))).
%\sum_{i\in\bigS\setminus\bigS_r}\min\big\{S_i\big(p_{f}^*(t)\big),Q_{i}(t)\big\} + 
\sum_{i \in \bigS_r}\bar{S}_i\big(p_{s}^*(t)\big)+\bar{C}\big(p_{s}^*(t)\big)  - C(p_f^*(t))
= D_r(t),
\end{equation}
where we use the notation  $\bar{C}(p_s^*(t)) := \max\{C(p_s^*(t)), C(p_f^*(t))\}$.
%\begin{equation}
%\label{RT:ConventionalSupply}
%\bar{C}(p) = \max\{C(p) - C(p_f^*(t)), 0\}.
%\end{equation}
}

In Case 2, the ISO does not penalize renewable suppliers for
overproduction, i.e., produce more than their DA commitments, because the usage of clean energy would improve social
welfare, through reducing the pollution and greenhouse gas emission from
thermal plants using fossil fuels.  

\subsection{Participants' Decision Problems and Equilibrium Definition}
The objective of the $i$-th renewable supplier is to maximize its total profit by selecting a supply function $S_i$ in the DA market and then refining it to $\bar{S}_i$ in the RT market (if possible). 
The $v$-th virtual trader, restricted to a single bidding opportunity in the DA market, maximizes its profit by choosing the bidding function $B_v$ in the DA market.
The aggregated LSE minimizes costs by allocating its demand between the two markets: purchasing $D_{l}(t)$ in the DA market and $\max\{D(t)-D_{l}(t),0\}$ in
the RT market for each delivery time $t\in\mathcal{T}$.  
Our analysis of this two stage game employs backward induction, starting with the decision problem of the renewable supplier in the RT market, where it operates as the sole strategic player. 

In the RT market, renewable suppliers have knowledge about the decision outcomes in the DA market and the realized demand and capacities. 
Let $D := \{D(t): t \in \mathcal{T}\}$ denote the realized demand curve, and $\mathbb{Q}:=\{Q_i(t): t \in \bigT, i \in \bigS\}$, denote the collective capacity profile of all renewable suppliers. 
Recall that the set of strategic suppliers $\bigS_r$ in the RT market are those with positive residual supply. 
{Let $\bar{\mathbb{S}}_{-i}$ represent the RT supply functions of all strategic suppliers except the $i$-th one, and $\mathbb{S}$ denote the collection of DA market supply functions submitted by all renewable suppliers.}
% bid in the RT market. 
% The $i$-th strategic renewable supplier
% \st{A supplier $i \in \bigS_r$
% chooses its supply function $\bar{S}_i$ to optimize its profit}
% Then the $i$-th strategic renewable supplier submits its supply function
% $\bar{S}_i$, considering the RT supply functions of other renewable
% suppliers
% $\bar{\mathbb{S}}_{-i}:=(\bar{S}_1,...,\bar{S}_{i-1},\bar{S}_{i+1},...,\bar{S}_{N_S})$, the DA supply function profile of the renewable suppliers $\mathbb{S}:=(S_1,...,S_{N_S})$, the collective capacity profile of all renewable suppliers $\mathbb{Q}$, and the actual demand curve $D$. The corresponding profit, denoted as $\bar{\Pi}^{\mathrm{S}}_{i,t}(\bar{S}_i;\bar{\mathbb{S}}_{-i},\mathbb{S},\mathbb{Q},D)$, for the delivery time $t\in\mathcal{T}$ is given by 
% \begingroup
% \allowdisplaybreaks
% \begin{align}
% \label{RT:RS_Obj}
% \bar{\Pi}^{\mathrm{S}}_{i,t}(\bar{S}_i;\bar{\mathbb{S}}_{-i}, \mathbb{S},\mathbb{Q},D):=\bar{S}_i\big(p_{s}^*(t)\big)p_{s}^*(t), 
% \end{align}
% \endgroup
% \st{where $p_{s}^*(t)$ is the RT market clearing price determined by
% \eqref{RT:MarketClearing}, and $\bar{S}_i$ is subject to
% \eqref{RT:CapacityConstr}.}
% \hl{This does not make sense as an objective. Is the goal to optimize only
%   at one time point. Please see my rewrite.}
The payoff $\bar{\Pi}^{\mathrm{S}}_{i,t}(\bar{S}_i;\bar{\mathbb{S}}_{-i},
\mathbb{S},\mathbb{Q},D)$ of the strategic supplier $i \in \bigS_r$ using a
feasible supply function $\bar{S}_i$ for time $t$ is given by
\begingroup
\allowdisplaybreaks
\begin{align}
\label{RT:RS_Obj}
\bar{\Pi}^{\mathrm{S}}_{i,t}(\bar{S}_i;\bar{\mathbb{S}}_{-i}, \mathbb{S},\mathbb{Q},D):=\bar{S}_i\big(p_{s}^*(t)\big)p_{s}^*(t), 
\end{align}
\endgroup
where $p_{s}^*(t)$ is the RT market clearing price determined by
\eqref{RT:MarketClearing}, and 
% Note that 
the notation
$\bar{\Pi}^{\mathrm{S}}_{i,t}(\bar{S}_i;\bar{\mathbb{S}}_{-i},
\mathbb{S},\mathbb{Q},D)$ emphasizes that $\bar{S}_i$ is chosen with the
knowledge of $\big(\bar{\mathbb{S}}_{-i},
\mathbb{S},\mathbb{Q},D\big)$. 
%\eqref{RT:RS_Obj} describes the $i$-th renewable supplier's payoff from selling $\bar{S}_i(p_s^*(t))-S_i(p_f^*(t))$ units of electricity in the RT market at price $p_s^*(t)$. 
The SFE for the RT market is formally defined as follows.
\begin{definition}{\textbf{(SFE in the RT market)}}
\label{Def:RT_SFE}
Given the DA supply function profile $\mathbb{S}$ of renewable suppliers,
their collective capacity profile $\mathbb{Q}$, and the actual demand
curve $D$, a RT supply function profile
$\bar{\mathbb{S}}^*=(\bar{S}^*_1,...,\bar{S}_{N_S}^*)$ is an SFE if the
following conditions hold for $i\in\mathcal{S}_r$ and $t\in\mathcal{T}$: 
\begin{enumerate}[(i)]
\item $\bar{S}_i^*:\mathbb{R}^+\rightarrow\mathbb{R}^{+}$ is continuous,
  increasing,  satisfies $\bar{S}_i^*(0)=0$, and \eqref{RT:CapacityConstr}; 
\item
  $\bar{\Pi}^{\mathrm{S}}_{i,t}(\bar{S}^*_i; \bar{\mathbb{S}}_{-i},\mathbb{S},\mathbb{Q},D)
  \geq
  \bar{\Pi}^{\mathrm{S}}_{i,t}(\bar{S}_i;
  \bar{\mathbb{S}}_{-i},\mathbb{S},\mathbb{Q},D)$, % \label{Def:RT_Equilibrium},
  for any other feasible supply functions $\bar{S}_i$ of the renewable supplier $i$. 
\end{enumerate}
\end{definition}
The above equilibrium definition requires at least two strategic renewable suppliers in the RT
market. If only one supplier has positive residual capacity, the ISO would require it to bid at a competitive price—its marginal cost—given the absence of competition. In practice, the ISO may employ various tools to detect and mitigate the supplier's market power (see, e.g., \citealt{CAISO_MPMitigation}). %It should be noted that the existence of a SFE, characterized by a function $\bar{S}^*_i$ that dominates all other feasible supply functions
%$\bar{S}_i$ for all times $t \in \bigT$, is not immediately obvious. 
We show the presence of such equilibrium in \S\ref{sc:Analysis}.

In the DA market, the $i$-th renewable supplier submits its DA supply function $S_i$ based on the DA supply function profile $\mathbb{S}_{-i}:=(S_1,...,S_{i-1},S_{i+1},...,S_{N_S})$ of other renewable suppliers, the DA demand curve $D_l$ of the LSE, the decision profile $\mathbb{B}:=(B_1,...,B_{N_V})$ of virtual traders, and the RT supply function profile $\bar{\mathbb{S}}^*$ of all renewable suppliers.\footnote{The optimal RT supply function profile $\bar{\mathbb{S}}^{*}(\mathbb{S},\mathbb{Q},D)$ is a function of the DA supply function, and the realized capacities and demands. 
For simplicity, we suppress the dependence in our notation.} The corresponding payoff for the delivery time $t\in\mathcal{T}$ is given by
\begin{align}
\label{DA:RS_Obj}
\Pi_{i,t}^{\mathrm{S}}(S_i;D_l,
  \mathbb{S}_{-i},\mathbb{B},\bar{\mathbb{S}}^*) :=
  S_{i}\big(p_{f}^*(t)\big)p_{f}^*(t) +
  \mathbb{E}\bigg[\bar{S}^*_i\big(p_{s}^*(t)\big)p_{s}^*(t) + \bigg(Q_i(t)-S_i\big(p_f^*(t)\big)\bigg)^{-}p_s^*(t)\bigg],
%  \mathbb{E}\bigg[\bigg(\bar{S}^*_i\big(p_{s}^*(t)\big) -S(p_f^*(t))\bigg)p_{s}^*(t)\bigg]
\end{align} 
where $p_{f}^*(t)$ denotes the DA market clearing price determined by \eqref{DA:MarketClearing}, and the expectation is taken w.r.t. demand and capacities. The notation $(\cdot)^{-}$ refers to the negative part of the expression. 
The $i$-th renewable supplier's payoff consists of two parts: the immediate payoff from the DA market and the expected payoff from the RT market. The second term in the expectation indicates the shortfall penalty the supplier must pay when the realized capacity is less than its DA commitment.
%The supplier would choose a supply function $S_i$ that maximizes its profit for each $t\in\mathcal{T}$, given the commitments of other renewable suppliers to $\mathbb{S}_{-i}$, the LSE to $D_l$, and virtual traders to $\mathbb{B}$ in the DA market, and the collective commitment of all renewable suppliers to $\bar{\mathbb{S}}^*$ in the RT market.

The LSE bids a DA demand curve $D_l$ given the DA supply function profile of renewable suppliers $\mathbb{S}$, the DA bidding function profile
$\mathbb{B}$ of virtual traders, and the RT supply function profile of renewable suppliers $\bar{\mathbb{S}}^*$.
The payoff of the LSE for $t \in \bigT$ is expressed as
\begin{align}
\Pi^{\mathrm{L}}_t(D_{l};\mathbb{S},\mathbb{B},\bar{\mathbb{S}}^*):=p_{f}^*(t)D_{l}(t)+
\mathbb{E}\Big[\big(D(t)-D_{l}(t)\big)^{+}p_{s}^*(t)\Big],\label{DA:LSE_Obj}
\end{align}
where $(\cdot)^+$ refers to the positive part of the term in parenthesis, and the expectation is taken
w.r.t. demand and capacities. In \eqref{DA:LSE_Obj}, the first term is the
cost associated with purchasing $D_l$ in the DA market, while the second term is the
expected cost for purchasing the residual demand in the RT market.\looseness = -1

The $v$-th virtual trader submits its bidding function $B_v$ with the knowledge of the renewable suppliers' DA decision profile $\mathbb{S}$, the LSE's DA demand curve $D_l$, the bidding function profile $\mathbb{B}_{-v}:=(B_1,...,B_{v-1},B_{v+1},...,B_{N_V})$ of the other virtual traders in the DA market, and the RT supply function profile of renewable suppliers $\bar{\mathbb{S}}^*$. The payoff of the $v$-th virtual trader for $t\in\mathcal{T}$ is given by
\begin{equation}
\label{DA:VB_Obj}
\Pi^{\mathrm{V}}_t(B_v;\mathbb{B}_{-v}, \mathbb{S}, D_l,\bar{\mathbb{S}}^*):=\Big(\mathbb{E}\big[p_{s}^*(t)\big]- p_{f}^*(t)\Big)B_v\big(p_{f}^*(t)\big),
\end{equation}
where the expectation is taken w.r.t. demand and capacities. The DA market equilibrium is defined as follows.
\begin{definition}{\textbf{(Equilibrium in the DA Market)}}
\label{Def:DA_Equilibrium}
Given the RT equilibrium solution $\bar{\mathbb{S}}^*$, the decision
profile $(\mathbb{S}^*,D_l^*,\mathbb{B}^*)$ is a DA market equilibrium if the following conditions hold:
\begin{enumerate}[(i)]
\item The function $S^*_i:\mathbb{R}^{+}\rightarrow\mathbb{R}^{+}$ is continuous and increasing, and satisfies $S_i^*(0)=0$ and  
\begin{equation}
\Pi_{i,t}^{\mathrm{S}}(S_i^*;\mathbb{S}_{-i}^*,\mathbb{B}^*,\bar{\mathbb{S}}^*,
D_l^*)\geq
\Pi_{i,t}^{\mathrm{S}}(S_i;\mathbb{S}_{-i}^*,\mathbb{B}^*,\bar{\mathbb{S}}^*,
D_l^*), \label{Def:RT_Equilibrium_1} 
\end{equation}
for all $t \in \bigT$, and all feasible DA supply functions $S_i$ for
supplier $i \in \bigS$.
\item The function $B^*_v:\mathbb{R}^{+}\rightarrow\mathbb{R}^{+}$ is
  continuous and decreasing, and satisfies 
\begin{equation}
\Pi_t^{\mathrm{V}}(B_v^*;\mathbb{B}_{-v}^*, \mathbb{S}^*, D_l^*)\geq
\Pi_t^{\mathrm{V}}(B_v;\mathbb{B}_{-v}^*, \mathbb{S}^*, D_l^*),  \label{Def:RT_Equilibrium_3}    
\end{equation}
for all $t \in \bigT$, and feasible bidding functions for virtual bidder $v
\in \bigV$. 
\item $\Pi_t^{\mathrm{L}}(D_l^*;\mathbb{S}^*,\mathbb{B}^*)\leq
  \Pi_t^{\mathrm{L}}(D_l;\mathbb{S}^*,\mathbb{B}^*)\label{Def:RT_Equilibrium_2}$,
  for all times $t \in \bigT$.
\end{enumerate}
\end{definition}
One of the main results of this paper is to show that such equilibrium exists.

\section{Equilibrium Analysis}
\label{sc:Analysis}
This section analyzes the model presented in \S\ref{sc:Model} using backward induction.  
We establish the equilibrium results for the RT market in \S\ref{ssc:RTAnalysis} and derive the DA equilibrium in \S\ref{ssc:DAAnalysis}.
\subsection{RT Market}
\label{ssc:RTAnalysis}
In the RT market, we consider different cases based on the realized values
of demand and capacities as discussed in \S\ref{sc:Model}. In Case 1, since $D_r(t)\leq 0$ for all
$t\in\mathcal{T}$, i.e., the LSE's DA reservation is sufficient to cover
its actual demand, there is no RT market activity. Consequently, our
analysis primarily focuses on Case 2. 

In Case 2, the residual demand % $D_r(t) > 0$ is positive at time
% $t\in\mathcal{T}$, i.e., 
$D_r(t)>0$ for some $t\in\mathcal{T}$. To
streamline our discussion, we start from a % simple
scenario %  in which
% Assumption \ref{Ass:RT} holds.
that satisfies the following assumption.
\looseness = -1 
\begin{assumption}
\label{Ass:RT}
% The following three conditions hold in Case 2:
The demand and capacity realizations satisfy the following conditions:
\begin{enumerate}[(i)]
\item The minimal residual demand is zero, i.e., $\underline{D}_r=0$.
\item All renewable suppliers have sufficient realized capacities to fulfill their
  DA commitments, i.e., the residual capacity $Q_{i}^r>
  0$ for all $i\in\mathcal{S}$, and therefore, $\mathcal{S}=\mathcal{S}_r$. 
\item All strategic renewable suppliers can reach their residual
  capacities at some $t\in\mathcal{T}$. 
\end{enumerate}
\end{assumption}
%\hl{What does it mean when you say that ``RT market is in Case 2'' -- do
%you mean $\exists t \in \bigT$ such that $D_r(t) > 0$, or do you mean
%that $D_r(t) > 0$ for all $t \in \bigT$? In think you mean the former
%here. So, why not fix at the beginning of this section?} 
%To streamline our discussion, we start from a simple scenario of Case 2,
%in which two additional conditions hold in the RT market: (i)  the
%minimal residual demand is zero, and (ii) all renewable suppliers'
%actual capacities exceed their DA commitment. 
%We will later remove these conditions and extend our analysis to general
%Case 2 scenarios. Assumption 1 formalizes the above  conditions.
%\begin{assumption}
%\label{Ass:RT}
%In the RT market, we assume that (i) $\underline{D}_r = 0$, and (ii)
%$Q_i^r> 0$ for $i\in\mathcal{I}$. 
%\end{assumption}
%Under Assumptoin \ref{Ass:RT}-(i), the residual demand $D_r(t)$ varies
%continuously on the range $[\underline{D}_r,\overline{D}_r]$, where
%$\underline{D}_r (=0)$ and $\overline{D}_r (>0)$ are reached within the
%RT market period $\mathcal{T}$. The symmetry condition
%\eqref{DA:Symmetry} ensures that renewable suppliers have identical DA
%commitments. Hence, by  
We % will remove these assumptions
relax Assumption~\ref{Ass:RT} in Appendix \ref{sec:robustmodelass}.
From the symmetry condition~\eqref{DA:Symmetry} and the definition
\eqref{RT:ResidualCapacity} of  residual capacity, it follows, without
loss of generality, that the residual capacities of renewable suppliers
can be ordered in a (strictly) ascending order, i.e.,  $0:=Q_0^r < Q_1^r
< \ldots <Q_{N_S}$, where the first inequality follows from 
Assumption 1~(iii). 
%\hl{Are you assuming the distribution of capacity is
%continuous; otherwise, a strict ordering may not be possible.} 
Based on this observation, the following lemma shows that in the SFE, a
renewable supplier's RT supply function is either the same as other
renewable suppliers or equals its residual capacity limit.\looseness = -1 

\begin{restatable}[\textbf{Symmetric supply functions}]{lemma}{RTSymmetry}
\label{RT:Symmetry}
% \begin{lemma}
%   \label{RT:Symmetry}
Under Assumption \ref{Ass:RT}, any 
% To attain a
SFE in the RT market % renewable suppliers submit symmetric
% supply functions until their residual capacities are reached, i.e., the
% $i$-th strategic renewable supplier's equilibrium supply function can be
% expressed as
is of the form
\begin{equation}
  \label{eq:RT-symmetric-SFE}
  \bar{S}_i^*(p)=\min\{\bar{S}(p),Q_{i}^r\},\ \ \forall p\in\mathbb{R}^+, 
\end{equation}
% where $p_s^*$ is the RT market clearing price, 
where $\bar{S}:\mathbb{R}^+\rightarrow\mathbb{R}^+$ is a continuous and
increasing function.
\end{restatable}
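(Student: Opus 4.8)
My plan is to prove the lemma by establishing necessity: starting from an arbitrary RT‑market SFE $\bar{\mathbb{S}}^*$, I would (i) reduce each strategic supplier's functional best‑response problem to a pointwise first‑order condition in the spirit of \citet{klemperer1989supply}, using that under Assumption~\ref{Ass:RT}(i) the residual demands $D_r(t)$, $t\in\bigT$, sweep out the entire interval $[0,\overline{D}_r]$; (ii) compare the first‑order conditions of any two still‑uncapped suppliers to force their RT supply functions to coincide; and (iii) exploit the strict ordering $0<Q_1^r<\dots<Q_{N_S}^r$ of residual capacities to stitch the uncapped pieces into a single continuous increasing $\bar{S}$ with $\bar{S}_i^*=\min\{\bar{S},Q_i^r\}$.

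\textbf{Step 1 (pointwise optimality).} Fix $i\in\bigS_r$ and the other suppliers' RT functions. By the clearing condition \eqref{RT:MarketClearing}, at a realized RT price $p$ supplier $i$ must deliver $\bar{S}_i(p)=D_r(t)+C(p_f^*(t))-\bar{C}(p)-\sum_{j\in\bigS_r,\,j\neq i}\bar{S}_j(p)$, so for each $t\in\bigT$ the pair $(p,\bar{S}_i(p))$ lies on a residual‑demand curve and, by \eqref{RT:RS_Obj}, must maximize $p\,\bar{S}_i(p)$ along that curve. Since $\tilde{D}$, $S$, and $C$ are continuous, $D_r(\cdot)$ is continuous on $\bigT$ and, by Assumption~\ref{Ass:RT}(i) together with $\overline{D}_r>0$ in Case~2, attains every value in $[0,\overline{D}_r]$; hence the realized RT prices form a nondegenerate interval. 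Because condition~(ii) of Definition~\ref{Def:RT_SFE} must hold at \emph{every} $t\in\bigT$, differentiating along the residual‑demand curve gives, at every realized price $p$ with $\bar{S}_i(p)<Q_i^r$,
\begin{equation}
\label{eq:RTsym-FOC}
\bar{S}_i(p)=p\Bigl(\bar{C}'(p)+\!\!\sum_{j\in\bigS_r,\,j\neq i}\!\!\bar{S}_j'(p)\Bigr),
\end{equation}
the derivatives of already‑capped suppliers being zero; at a realized price with $\bar{S}_i(p)=Q_i^r$ the one‑sided optimality condition shows the supplier weakly prefers to supply more, so the cap binds and, since the RT clearing price is increasing in $D_r$, remains bound for all larger realized prices. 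Thus each supplier is uncapped on an initial price range and capped thereafter.

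\textbf{Steps 2--3 (coincidence and assembly).} For two suppliers $i,k$ both uncapped on a common price interval, subtracting the two instances of \eqref{eq:RTsym-FOC} cancels $\bar{C}'$ and every shared derivative term, leaving $g(p)=-p\,g'(p)$ with $g:=\bar{S}_i-\bar{S}_k$, so $g(p)=c/p$ there. By Assumption~\ref{Ass:RT}(i) there is $t_0$ with $D_r(t_0)=0$; feeding this into \eqref{RT:MarketClearing} writes $0$ as a sum of the nonnegative quantities $\bar{S}_j(p_s^*(t_0))$ and $\bar{C}(p_s^*(t_0))-C(p_f^*(t_0))$, forcing $\bar{S}_j(p_s^*(t_0))=0$ for all $j\in\bigS_r$; moreover no supplier can sit at $\bar{S}_j=0$ at a realized price while another supplier is strictly increasing there (the right‑hand side of \eqref{eq:RTsym-FOC} for $j$ would then be positive), so all suppliers leave $0$ at the same price $\hat{p}$, at which $g(\hat{p})=0$ and hence $c=0$. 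Therefore any two suppliers coincide wherever both are uncapped. Ordering the capacities $0<Q_1^r<\dots<Q_{N_S}^r$ (Assumption~\ref{Ass:RT}(iii) and the ordering established before the lemma), supplier~$1$ caps first at some price $p^{(1)}>\hat{p}$ with all suppliers sharing the common value for $p\le p^{(1)}$; suppliers $2,\dots,N_S$ continue to coincide for $p>p^{(1)}$ until supplier~$2$ caps, and so on. Defining $\bar{S}(p)$ as the common value of the suppliers still uncapped at price $p$ — continued monotonically past the last cap, and off the realized range, where equilibrium imposes nothing, taken to preserve the form — yields a continuous increasing $\bar{S}$ with $\bar{S}_i^*(p)=\min\{\bar{S}(p),Q_i^r\}$, which is \eqref{eq:RT-symmetric-SFE}.

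\textbf{Main obstacle.} The crux is Step~1: rigorously justifying the \citet{klemperer1989supply}‑style passage from a functional best response to the pointwise condition \eqref{eq:RTsym-FOC} in the presence of the capacity ceilings \eqref{RT:CapacityConstr} and the kink of $\bar{C}(\cdot)=\max\{C(\cdot),C(p_f^*(t))\}$ at $p=p_f^*(t)$ — in particular, that SFE supply functions are differentiable where uncapped, that the first‑order condition is necessary and, together with monotonicity, locally sufficient (ruling out non‑local deviations), and that the realized‑price set is a genuine interval so that \eqref{eq:RTsym-FOC} is forced at a continuum of prices rather than isolated points. Once Step~1 is in place, Steps~2--3 reduce to the elementary ODE $g=-p\,g'$ plus bookkeeping over the capacity ordering.
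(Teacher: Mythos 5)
Your proposal is correct and follows essentially the same route as the paper's proof: derive the pointwise first-order condition from the clearing identity, exploit the symmetric structure of the FOCs to force all uncapped suppliers' curves to coincide, pin the integration constant to zero via the lowest residual demand (Assumption~\ref{Ass:RT}(i) together with $\bar{S}_i^*(0)=0$), and then stitch across the strictly ordered capacities $Q_1^r<\dots<Q_{N_S}^r$. The one cosmetic difference is bookkeeping: you subtract two FOCs to reduce to the scalar ODE $g=-p\,g'$ (so $pg\equiv\mathrm{const}$), whereas the paper writes $(p\bar{S}_i^*)'$ directly as a common integrand and shows the per-supplier constants $c_i$ all vanish; both routes eliminate the same constant by the same boundary behavior, and the implicit regularity hypotheses you flag as the ``main obstacle'' (differentiability off caps, FOC necessity, realized-price set being an interval) are left at the same level of rigor in the paper's own proof.
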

%\hl{In previous section the function $\bar{S}^*_i$ was required to be differentiable. This is not case here -- why the discrepancy? Should one of the two be fixed to remove this.}
%\end{lemma}
%\hl{Why are you defining the supply function that is time independent, as a function that has time built in? Why not the following statement: The supply function $\bar{S}_i$ satisfies $\bar{S}_i(p) = \max\{\bar{S}(p),Q^r_i\}$ for a fixed $\bar{S}: \real^+ \rightarrow \real^+$ that is continuous and increasing. Why bring in time at all? You might need it in the proof to establish this result.}
%Lemma \ref{RT:Symmetry} indicates that when additional supply is required in the RT market, the supplier submits symmetric supply function $\bar{S}$ until it reaches its residual capacity $Q_{i}^r$.  
%It is important to note that the equilibrium supply function
%$\bar{S}^*_i$ is independent of time $t$, i.e., the
%renewable supplier can only submit a single function $\bar{S}^*_i$ % in the RT market but can
% not change it during the
%for the entire
%consumption period $\mathcal{T}$. 
This lemma states that the supply functions
$\bar{S}^*_i$ for all $i\in\mathcal{S}$ are identical before they reach their residual
capacities $Q^r_i$ for $i\in\mathcal{S}$. 
%\footnote{To emphasize that $\bar{S}^*_i$ and $\bar{S}$ hold for all market clearing price $p_s^*(t)$ with $t\in\mathcal{T}$, we use a generic variable $p$ instead of $p_s^*(t)$ to suppress the time dependence.}
%\hl{Every price in $\bar{S}$ is a market clearing price. So it may be better to use $p_s^*$ instead of $p$, as it is easier to explain the relationship between $p_s^*$ and $p_s^*(t)$ than $p$ and $p_s^*(t)$}
%the market clearing price $p_{s}^*(t)$ is time dependent, as $p_{s}^*(t; D(t))$ essentially corresponds to the demand $D(t)$ at time $t\in\mathcal{T}$. We suppress this dependence in the notation to ease the expression.
%Next, we construct the SFE by showing the existence of continuous, increasing function $\bar{S}$ defined in Lemma \ref{RT:Symmetry}. 
% According to the increasing order of

Next, we characterize the function $\bar{S}$ that defines the SFE in the
RT market.
{It follows from Assumption~\ref{Ass:RT} (all residual capacities are attainable) and Lemma \ref{RT:Symmetry} ($\bar{S}$ is a continuous increasing function) that there exists a RT market clearing price $p_s^*(t)$ that satisfies $Q^r_{k-1} < \bar{S}(p^*_s(t)) < Q^r_k$ for some $t \in \bigT$ and $k \in \{1,...,N_S-1\}$. } 
In this case, the first $k-1$ suppliers produce at their residual capacities, while the remaining $N_S - k + 1$ suppliers supply according to $\bar{S}$.\footnote{
This configuration arises because suppliers are ordered by increasing residual capacities $Q^r_i$.}
% To this end, let $D_r$ be the corresponding realized residual demand and
% denote as $\sigma_{k}:\mathbb{R}^+\rightarrow\mathbb{R}^+$ the $k$-th
% piece of $\bar{S}$, i.e., $\sigma_{k}$ is the equilibrium supply
% function and its function value starts from $Q_{k-1}^r$ and ends at
% $Q_{k}^r$. 
% Consider the payoff \eqref{RT:RS_Obj} of the $k$-th renewable supplier. We
% can rewrite it as follows
Then using the market clearing condition \eqref{RT:MarketClearing} and % the
% symmetry in
Lemma~\ref{RT:Symmetry}, the payoff \eqref{RT:RS_Obj} of the $k$-th
renewable supplier may be rewritten as follows:
\begin{align}
\label{RT:Payoff}
%\max_{p_s^*(t)\in [p_{s}^{k-1},p_{s}^k]}&\ 
\bigg(D_r(t) - \big(\bar{C}(p_s^*(t)) - C(p_f^*(t))\big) -(N_S - k)\bar{S}(p_s^*(t))-\sum_{i=0}^{k-1}Q_{i}^r\bigg)p_s^*(t),
\end{align}
%where $\sum_{i=0}^{k-1}Q_{i}^r$ indicates that the first $k-1$ suppliers
%are bounded by their residual capacities. 
The equilibrium supply function $\bar{S}$ must satisfy the first order
optimality condition~(FOC)
% Note that we suppress the time dependence in $D_r$, $p_f^*$, and
% $p_s^*$, respectively, because \eqref{RT:Payoff} may hold at different
% times where $D_r(t)=D_r(t')$ with $t\neq t'$ and $t, t'\in\mathcal{T}$. 
%\hl{Here the market clearing price $p_s^*$ implies that the price corresponds to $D$. So I use $p_s^*$ instead of $p_D$. I want to simplify the notation, because it's confusing to use $p$, $p_s^*(t)$, and $p_D$ simultaneously.}
%Note that \eqref{RT:Payoff} holds for all $p_s^*(t)\in[p^s_{i-1},p^s_i]$ and $t\in\mathcal{T}_i$. 
%The next lemma shows that \eqref{RT:Payoff} is concave in $p_s^*$.
%\begin{lemma}
%\label{RT:Concavity}
%The expression~\eqref{RT:Payoff} is strictly concave in $p_s^*$.
%\end{lemma}
%At equilibrium, the supply function $\bar{S}_{(i)}$ and the corresponding market clearing price $p_{s}^*(t)\in [p^s_{i-1},p^s_i]$ maximizes the payoff \eqref{RT:Payoff}. Based on Lemma \eqref{RT:Concavity}, this means the following first order condition (FOC) of \eqref{RT:Payoff} holds for all $p_s^*(t)\in[p^{s}_{i-1},p^{s}_i]$ and $t\in\mathcal{T}_i$
%Lemma \eqref{RT:Concavity} suggests that $\sigma_k$ and $p_s^*$ should satisfy the FOC of \eqref{RT:Payoff} 
%To maximize the supplier's payoff, $\sigma_k$ and $p_s^*$ should satisfy the FOC of \eqref{RT:Payoff}:
% The FOC of \eqref{RT:Payoff} is 
\begin{equation*}
% D_r - \bar{C}(p_s^*(t)) - (N_S -
% k)\sigma_{k}(p_s^*(t))-\sum_{i=0}^{k-1}Q_{i}^r +
% p_s^*\bigg(-\bar{C}'(p_s^*(t))-(N_S-k)\bar{S}^{\prime}(p_s^*)\bigg)=0.
  D_r(t) - \big(\bar{C}(p_s^*(t)) - C(p_f^*(t))\big) - (N_S - k)\bar{S}(p_s^*(t))-\sum_{i=0}^{k-1}Q_{i}^r + p_s^*(t)\bigg(-\bar{C}'(p_s^*(t))-(N_S-k)\bar{S}^{\prime}(p_s^*(t))\bigg)=0.
\end{equation*}
%\hl{GARUD: We do not know that $\sigma_k$ is the maximizer -- only that it
%  satisfies the FOC. See my alternative formulation in draft-GI.tex --
%  that makes no reference to optimality. Please rewrite the motivation
%  using the motivation in that draft.}
Using again the market clearing condition \eqref{RT:MarketClearing} and
rearranging, % we can rewrite the above
the 
FOC can be rewritten as
\begin{equation}
\label{RT:ODE}
\big(p_s^*(t)(N_S-k)\big)\bar{S}^{\prime}(p_s^*(t))-\bar{S}(p_s^*(t))=-p_s^*(t)\bar{C}'(p_s^*(t)).
\end{equation}
We can view \eqref{RT:ODE} % can be viewed 
as an ordinary differential equation (ODE) for % the function $\sigma_k$
% and
$\bar{S}$ in terms of the 
independent variable $p_s^*(t)$ that satisfies
$Q^{r}_{k-1} < \bar{S}(p_s^*(t)) < Q^r_{k}$ for $t\in\mathcal{T}$. % We slightly abuse our notation in \eqref{RT:ODE} because it not only holds for a particular $D_r$ but any possible $D_r$ and the corresponding $p_s^*$ that satisfy the FOC and market clearing condition. 
% Next, we solve the corresponding ODE for
% $k\in\mathcal{S}\setminus\{N_S\}$, and specify the optimal supply function
% $\sigma_k$ within the obtained ODE solutions. 
%Second, the ODE is a necessary but not sufficient condition for the equilibrium supply function $\sigma_k$. 
%This function maximizes the payoff of renewable suppliers over the union of price intervals $\bigcup_{i=1}^{N_S-1}[p_{s}^{i-1},p_{s}^i]$. 
%It is important to note that the upper bound $\overline{D}_r$ of the residual demand does not impact the supply function form as long as it ensures an active BA period. This is due to the fact that a valid supply function reflects the merit order of the supplier's production levels.
The entire supply function is then constructed by sequentially solving \eqref{RT:ODE} for $k=1,...,N_S-1$, and ``stitching" these solutions together.
%We next determine the supply function $\bar{S}^{(k)}$ and the price $p_{s}^k$.
Specifically, for $k=1$, the solution to \eqref{RT:ODE} is 
\begin{equation}
\label{RT:Sol_0}
\bar{S}(p;\bar{c}_1)=\begin{cases}
0, & \text{if}\ p=0, \\
p^{\frac{1}{N_S-1}}\bigg(\bar{c}_1-\frac{1}{N_S-1}F_{1}(p)\bigg),
& \text{if}\ p\in(0,\gamma_1(\bar{c}_1)],\text{\footnotemark} 
\end{cases}
\end{equation}
\footnotetext{Since there is no need to emphasize the market clearing price $p_s^*(t)$ for time $t\in\mathcal{T}$ in the ODE solutions, we replace it with a generic variable $p\in\mathbb{R}^+$ to ease the notation.}
where $\bar{c}_1$ is a constant, $F_1$ is an antiderivative of
$p^{-1/(N_S-1)}\bar{C}'(p)$ (see detailed derivations in Online
Appendix \ref{sc:AppSolODE}), and  % $\gamma_1(\bar{c}_1)$ is defined as follows:
\begin{equation*}
\gamma_1(\bar{c}_1):=\min\{p:\bar{S}(p;\bar{c}_1)=Q_1^r\}.
\end{equation*}
For $k\in\mathcal{S}\setminus\{1, N_S\}$, the solutions to \eqref{RT:ODE}
% an be collectively expressed as
can be written as
\begin{equation}
\label{RT:Sol_1}
\bar{S}(p;\bar{c}_k)=
p^{\frac{1}{N_S-k}}\bigg(\bar{c}_k-\frac{1}{N_S-k}F_{k}(p)\bigg),\ \ p\in\big(\gamma_{k-1}(\bar{c}_{k-1}), \gamma_{k}(\bar{c}_{k})\big],
\end{equation}
where $\bar{c}_k$ is a constant, $F_{k}(p)$ an antiderivative of $\bar{C}'(p)\big(p\big)^{-1/(N_S-k)}$, and % $\gamma_k$ defined as
\begin{equation*}
\gamma_k(\bar{c}_k):=\min\{p:S(p;\bar{c}_k)=Q_k^r\}.
\end{equation*}
% With different values of
For any value of 
$\bar{c}_k$, equations~\eqref{RT:Sol_0}--\eqref{RT:Sol_1} define a class of SFE 
candidates that satisfy the FOC % of the renewable suppliers' payoff.
over a corresponding interval.
However, % as mentioned earlier, an equilibrium supply
we also need to ensure that the 
function $\bar{S}$ is continuous and monotonically increasing. 
% hould be increasing and continuous. 
% Hence, one should choose the
% constants $(\bar{c}_1,..., \bar{c}_{N_S-1})$ in
% \eqref{RT:Sol_0}--\eqref{RT:Sol_1} to ``stitch'' together
% $\{\sigma_k\}_{k=1}^{N_S-1}$ into a single continuous and increasing
% supply function.
The following proposition establishes the existence of constants that
ensure continuity and monotonicity of $\bar{S}$.

\begin{restatable}[\textbf{Structure of RT market SFEs}]{proposition}{RTexistence}
\label{RT:Existence}
%There exists constants $(\bar{c}_1,..., \bar{c}_{N_S})$ that make the
%function $\bar{S}$ increasing and continuous on
%$\bigcup_{i=1}^{N_S-1}[p^{s}_{i-1},p^{s}_i]$. That is, for
%$i\in\mathcal{S}\setminus\{N_S\}$ and $\bar{S}_{(i)}$ in form of
%\eqref{RT:Sol_0}-\eqref{RT:Sol_1}, the constants $(\bar{c}_1,...,
%\bar{c}_{N_S})$ ensure the following conditions:
A function $\bar{S}(p)$ defines a RT market SFE candidate (as characterized in~\eqref{eq:RT-symmetric-SFE}) if, and only if, there exist 
constants
$\bar{\bm{c}} = (\bar{c}_1,\ldots, \bar{c}_{N_S})$ % that ensure the
% following conditions hold for $k\in\mathcal{S}\setminus\{N_S\}$ and each
% $\sigma_{k}$ in form of \eqref{RT:Sol_0}-\eqref{RT:Sol_1}:
such that the function $\bar{S}(p;\bm{c})$ defined sequentially  via
\eqref{RT:Sol_0}--\eqref{RT:Sol_1}, for $k = 1, \ldots, 
N_S-1$, satisfy the following conditions: 
\begin{enumerate}[(i)]
\item The prices
  $\gamma_k(\bar{c}_k):=\min\{p:\bar{S}(p;\bar{\bm{c}})=Q_k^r\}$ are in an
  increasing order, i.e., $0:=\gamma_0 < \gamma_1(\bar{c}_1) <\ldots
  <\gamma_{N_S}(\bar{c}_{N_S})$.
\item $\bar{S}(p;\bar{\bm{c}})$ is % increasing in $p$.
% \item $\bar{S}(p;\bar{\bm{c}})$ is 
  continuous and monotonically increasing for  $p > 0$, and satisfies
  $\lim_{p\to 0^+}\bar{S}(p;\bar{\bm{c}}) = 0$ and $\lim_{p\to
    0^+}\bar{S}'(p;\bar{\bm{c}}) = 0$.
  % and $\sigma_{k+1}(\gamma_k(\bar{c}_k);\bar{c}_{k+1}) =
  % \sigma_{k}(\gamma_k(\bar{c}_k);\bar{c}_k)$ for
  % $k\in\mathcal{S}\setminus\{1,N_S\}$.
\end{enumerate}
Moreover, the set of constants $\bar{\bm{c}}$ satisfying $(i)$ and $(ii)$
is non-empty, and the continuity requirement implies that all the constants
$\bar{c}_k$, $k \geq 2$, are increasing functions of $\bar{c}_1$, and therefore,
$\bar{S}(p;\bar{\bm{c}})$ reduces to a $1$-dimensional manifold. 
% Moreover, $\bar{S}^*_i(p)=\min\{\bar{S}(p), Q_r^i\}$ for $i\in\mathcal{S}$ is an SFE for the RT market.
\end{restatable}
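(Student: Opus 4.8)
The plan is to establish the proposition's three assertions in turn: the ``if and only if'' characterization of RT market SFE candidates, the reduction of the candidate set to a one-parameter family, and non-emptiness of the set of admissible constants.

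\emph{The characterization.} For necessity, let $\bar{\mathbb{S}}^{*}$ be any SFE in the RT market. Lemma~\ref{RT:Symmetry} gives $\bar S_i^{*}(p)=\min\{\bar S(p),Q_i^{r}\}$ with $\bar S$ continuous and increasing, so the prices $\gamma_k:=\min\{p:\bar S(p)=Q_k^{r}\}$ partition $(0,\infty)$ into intervals on whose interior exactly the first $k-1$ suppliers are at capacity while suppliers $k,\dots,N_S$ follow $\bar S$, forcing $0=\gamma_0<\gamma_1\le\dots\le\gamma_{N_S}$; this is condition~$(i)$. On the interior of the $k$-th interval, the derivation leading to~\eqref{RT:ODE} shows that the equilibrium inequality in Definition~\ref{Def:RT_SFE}$(ii)$ implies that $\bar S$ solves the linear ODE~\eqref{RT:ODE}, and integrating it with the factor $p^{-1/(N_S-k)}$ yields precisely the one-parameter family~\eqref{RT:Sol_0}--\eqref{RT:Sol_1} with some constant $\bar c_k$. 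The remaining parts of Definition~\ref{Def:RT_SFE}$(i)$ --- continuity, monotonicity, $\bar S(0)=0$, and the tangential behavior $\bar S'(0^{+})=0$ forced by differentiability at the origin --- are exactly condition~$(ii)$. For sufficiency, one checks that any $\bar S(\cdot;\bar{\bm{c}})$ assembled under $(i)$--$(ii)$ is a feasible RT supply function and, more importantly, that the first-order condition it satisfies is \emph{globally} optimal for each supplier: rewriting the $k$-th supplier's payoff~\eqref{RT:Payoff} against a clearing price $p$ that it induces as $\pi_k(p)=\left(D_r(t)-(\bar C(p)-C(p_f^{*}(t)))-(N_S-k)\bar S(p)-\sum_{i=0}^{k-1}Q_i^{r}\right)p$, the parenthesized factor is strictly decreasing in $p$ (since $\bar C$ and $\bar S$ are increasing) and $\bar C$ is concave, so $\pi_k$ is ``price times downward-sloping residual demand'' with a unique interior stationary point --- the solution of~\eqref{RT:ODE} --- which maximizes it over the feasible price range, the capacity-flat pieces (where $\bar S\equiv Q_i^{r}$) being handled by the monotone structure of $\pi_k$ there.

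\emph{One-parameter reduction.} Under $(i)$--$(ii)$ the assembly is sequential: $\bar c_1$ fixes $\bar S(\cdot;\bar c_1)$ on $(0,\gamma_1]$ and hence $\gamma_1$; continuity at $\gamma_1$ forces $\bar S(\gamma_1;\bar c_2)=Q_1^{r}$, which by~\eqref{RT:Sol_1} has the unique solution $\bar c_2=Q_1^{r}\gamma_1^{-1/(N_S-2)}+\tfrac{1}{N_S-2}F_2(\gamma_1)$, and iterating expresses each $\bar c_k$ explicitly through $\gamma_{k-1}$ and hence through $\bar c_1$ --- this is the one-dimensional manifold. For the monotonicity claim: on the first interval $\bar S(p;\bar c_1)=p^{1/(N_S-1)}\bar c_1-(\cdots)$ is pointwise increasing in $\bar c_1$, so $\gamma_1$ is decreasing in $\bar c_1$; and a direct differentiation shows that $\partial\bar S(p;\bar c_k)/\partial\gamma_{k-1}$ and $\partial\bar c_k/\partial\gamma_{k-1}$ are both negative, having the sign of $\bar C'(\gamma_{k-1})-Q_{k-1}^{r}/\gamma_{k-1}$, which is $\le 0$ because evaluating~\eqref{RT:ODE} on interval $k-1$ at $p=\gamma_{k-1}$ (where $\bar S=Q_{k-1}^{r}$) gives $\gamma_{k-1}\bar C'(\gamma_{k-1})=Q_{k-1}^{r}-\gamma_{k-1}(N_S-k+1)\bar S'(\gamma_{k-1})\le Q_{k-1}^{r}$. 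Composing these signs along the recursion shows that every $\gamma_k$ decreases, hence every $\bar c_k$ with $k\ge 2$ increases, in $\bar c_1$.

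\emph{Non-emptiness, and the main obstacle.} It remains to exhibit one admissible $\bar c_1$; by the monotone dependence just established it suffices to run the construction at a single well-chosen value (for instance, the one compatible with the endpoint requirement $\bar S'(0^{+})=0$) and verify inductively that $\bar S$ stays non-negative and increasing on each interval. From~\eqref{RT:ODE}, $\bar S'(p)\ge 0$ iff $g(p):=\bar S(p)-p\bar C'(p)\ge 0$, and $g(\gamma_{k-1})\ge 0$ by the display above; on the interval $g$ obeys $g'(p)=\tfrac{g(p)}{p(N_S-k)}-(p\bar C'(p))'$, and concavity of $\bar C$ together with the structural assumptions on the conventional cost curve keeps $g\ge0$, so $\bar S$ remains increasing and --- since $Q_{k-1}^{r}<Q_k^{r}$ and each residual capacity is reached at some $t$ by Assumption~\ref{Ass:RT}$(iii)$ --- the thresholds $\gamma_k$ are finite and strictly ordered. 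I expect the genuinely delicate step to be exactly this non-emptiness argument: the invariant-region control of the sign of $g$, together with the correct handling of the limit $p\to0^{+}$, where the exponent $1/(N_S-k)$ interacts with the vanishing of $\bar C'$ on $[0,\underline{p}_c]$. By contrast, the sufficiency of the first-order condition and the one-parameter reduction should be comparatively routine given concavity of $\bar C$ and the two sign computations recorded above.
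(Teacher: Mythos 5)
Your necessity argument and the one-parameter reduction parallel the paper's, and your explicit computation of $\partial\bar{c}_k/\partial\gamma_{k-1}$ via the sign of $\bar{C}'(\gamma_{k-1}) - Q^r_{k-1}/\gamma_{k-1}$ is a correct (and slightly more concrete) variant of the paper's monotonicity-of-$\gamma_1(\bar{c}_1)$ and $F_k(\gamma_1)$ argument. The two substantive divergences are the sufficiency direction and the non-emptiness argument, and both currently have gaps.

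For sufficiency, you appeal to unimodality of $\pi_k(p)=p\,R(p)$, where $R(p)$ is the residual demand facing supplier $k$, arguing that $R$ decreasing and $\bar{C}$ concave give a unique interior stationary point. This is not justified: $R(p) = D_r - \bar{C}(p) + C(p_f^*) - (N_S-k)\bar{S}(p) - \sum_{i<k}Q^r_i$ involves $\bar{S}$, which is not concave (indeed the solution forms~\eqref{RT:Sol_0}--\eqref{RT:Sol_1} are convex near $0$ when $N_S-k\geq 2$), so $R$ decreasing alone does not make $p R(p)$ quasiconcave, and multiple stationary points are not a priori excluded. The paper does not appeal to unimodality at all; instead it computes the payoff derivative $g'(p_s^*)$ in two cases (supplier $i$ strictly below capacity, $p_s^*\in(\gamma_0,\gamma_i]$; supplier $i$ at capacity, $p_s^*\in(\gamma_i,\gamma_{N_S}]$), substitutes the ODE~\eqref{RT:ODE} for the other suppliers, and derives a contradiction with the optimality of any deviation $\Psi$ that differs from $\min\{\bar{S},Q^r_i\}$. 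You would need either to carry out this case split or supply an independent quasiconcavity argument; as written, the ``capacity-flat pieces handled by monotone structure'' clause is where the real work is hiding.

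For non-emptiness, you flag the issue yourself, and the gap is real: your invariant-region claim that $g(p):=\bar{S}(p)-p\bar{C}'(p)\geq0$ is preserved along the flow $g'(p)=\tfrac{g(p)}{p(N_S-k)}-(p\bar{C}'(p))'$ does not follow from concavity of $\bar{C}$ alone---the forcing term $(p\bar{C}'(p))' = \bar{C}'(p)+p\bar{C}''(p)$ can be large enough to drive $g$ negative, and no inequality you have established rules this out. The paper avoids this entirely by exploiting the free constant: it shows in~\eqref{App:P1_c1} that $\sigma_k'(p;\bar{c}_k)>0$ on $[0,p_k]$ as soon as $\bar{c}_k$ exceeds an explicit finite threshold $\underline{c}_k(p_k)$, and then ``stitches'' the pieces by a forward induction (choosing each $\bar{c}_k$ large enough that $\sigma_k$ reaches $Q^r_{k-1}$ before $\gamma_{k-1}$) followed by a backward induction (resetting $\bar{c}_k$ so the pieces match exactly at the breakpoints). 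The existential freedom in $\bar{c}_1$ is what makes the construction go through, which is precisely what your fixed-$\bar{c}_1$ sketch gives up; if you want an invariant-region proof, you would at minimum need to restrict $\bar{C}$ (e.g. bound $p\bar{C}''$) beyond the paper's assumptions.
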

%\hl{GARUD: The functions $\sigma_k$ and $\bar{S}_i$ are simply functions. There
%  is no need to introduce to refer to $p_s^*$ here. The notation $p_s^*$
%  is reserved for denoting the market clearning price.}
%\hl{Please do NOT use $p_k$ for the thresholds -- the same symbol is used
%  too many times.}
{The detailed proof of Proposition \ref{RT:Existence} is in the Appendix; we now describe its main structure. The sufficiency (``if") direction follows from Lemma~\ref{RT:Symmetry}. That is, a supply function equilibrium (SFE) should be of the form \eqref{RT:Sol_0}-\eqref{RT:Sol_1}, with constants $\bar{\bm{c}}$ chosen to ensure continuity and monotonicity. For the necessity (``only if"), we first show that
$\bar{S}$ is continuous and increasing within each interval
$(\gamma_{k-1},\gamma_k)$ if $\bar{c}_k$ exceeds a certain threshold. We then prove that the
continuity at each breakpoint $\gamma_k$ implies that $\bar{c}_k$, for each $k\in\mathcal{S}\setminus\{1,N_S\}$, is an increasing function of $\bar{c}_1$. %Hence, if we increase $\bar{c}_1$, all other constants are increased.
%Thus, the family $\bar{S}(p;\bar{\bm{c}})$ is effectively a 1-dimensional family defined by $\bar{c}_1$. 
The monotonicity is achieved by sufficiently increasing $\bar{c}_1$ to ensure all constants exceed their respective thresholds. 
It is worth noting that larger constants imply ``steeper" supply functions
and therefore, lower RT prices, leading to uniqueness, as formalized in the  following proposition.}
%Due to the monotonicity and continuity of the supply function, there is an one to one mapping between the constants $(\bar{c}_1,..., \bar{c}_{N_S-1})$ and the prices $(p_1,...,p_s^{N_S-1})$. 
\begin{restatable}[\textbf{Uniqueness of the RT market SFE}]{proposition}{RTUniqueness}
%\begin{proposition}[\textbf{Uniqueness of the RT market SFE}]
\label{RT:Uniqueness}
Within the class of SFE candidates % presented in
identified in 
Proposition
\ref{RT:Existence}, there exists a unique SFE for the RT market.
%\end{proposition}
\end{restatable}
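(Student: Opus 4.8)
The plan is to prove uniqueness by exploiting the one-dimensional parametrization of SFE candidates established in Proposition~\ref{RT:Existence}: every candidate equilibrium supply function is $\bar{S}(p;\bar{\bm{c}})$ with $\bar{c}_2,\ldots,\bar{c}_{N_S}$ determined as increasing functions of $\bar{c}_1$, so the whole family is indexed by the single scalar $\bar{c}_1$ ranging over those values that satisfy conditions (i)--(ii). I would therefore show that, among this monotone one-parameter family, exactly one value of $\bar{c}_1$ yields a genuine equilibrium (as opposed to merely a FOC-satisfying, monotone candidate). The natural selection criterion is the boundary/tie-breaking behavior already baked into the model: the RT supply function must satisfy $\bar{S}(0)=0$ together with $\lim_{p\to 0^+}\bar{S}'(p)=0$ and must stay consistent with the requirement that strategic suppliers actually reach their residual-capacity caps at equilibrium prices (Assumption~\ref{Ass:RT}(iii)); I expect the argument to pin down $\bar{c}_1$ as the \emph{smallest} admissible constant, i.e.\ the one on the boundary of the feasible set identified in Proposition~\ref{RT:Existence}.

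Concretely, the key steps, in order, are: (1) recall from Proposition~\ref{RT:Existence} that feasibility forces $\bar{c}_1 \ge \underline{c}_1$ for a threshold $\underline{c}_1$ below which monotonicity or the ordering $\gamma_0<\gamma_1<\cdots$ fails, and that the map $\bar{c}_1 \mapsto (\bar{c}_2(\bar{c}_1),\ldots,\bar{c}_{N_S}(\bar{c}_1))$ is continuous and strictly increasing; (2) establish the comparative-statics fact flagged in the paragraph preceding the proposition — namely that increasing $\bar{c}_1$ makes $\bar{S}(\cdot;\bar{\bm{c}})$ pointwise larger (steeper), hence for any fixed realized residual demand $D_r(t)>0$ the market-clearing price $p_s^*(t)$ solving \eqref{RT:MarketClearing} is strictly decreasing in $\bar{c}_1$; (3) write the $k$-th supplier's equilibrium payoff \eqref{RT:Payoff} along the family, i.e.\ as a function of $\bar{c}_1$ alone through $\bar{S}$ and $p_s^*$, and show that the residual, off-equilibrium best-response problem of a single supplier against the others using $\bar{S}(\cdot;\bar{\bm{c}})$ has a solution lying in the same family only for one value of $\bar{c}_1$; (4) conclude that any two SFE candidates that are both equilibria must share this $\bar{c}_1$, hence coincide.

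The cleanest route for step~(3)--(4) is probably a deviation argument: fix a candidate indexed by $\bar{c}_1$ and suppose supplier $i$ contemplates a deviation $\bar{S}_i$. Because the others are using $\min\{\bar{S}(\cdot;\bar{\bm{c}}),Q_j^r\}$, the residual demand curve faced by $i$ is explicit, and maximizing $\bar{S}_i(p_s^*)p_s^*$ over feasible increasing $\bar{S}_i$ reproduces exactly the ODE \eqref{RT:ODE} with the \emph{same} inhomogeneous term but possibly a different integration constant; the constraints $\bar{S}_i(0)=0$, $\bar{S}_i'(0^+)=0$, and the capacity cap then force $i$'s best response to be $\min\{\bar{S}(\cdot;\bar{c}_1^{\mathrm{BR}}),Q_i^r\}$ for a uniquely determined $\bar{c}_1^{\mathrm{BR}}$, and the fixed-point (symmetry) requirement $\bar{c}_1^{\mathrm{BR}}=\bar{c}_1$ singles out one value. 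Here one must be careful that the deviation stays feasible for every $t\in\bigT$ simultaneously (the outer minimization over $t$ in \eqref{RT:ResidualCapacity}), and that corner solutions where $i$ deviates to a constant (its cap) or to a flatter curve are strictly worse — this monotone-comparative-statics bookkeeping across the stitched pieces $k=1,\ldots,N_S-1$ is where the real work lies.

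The main obstacle I anticipate is step~(3): verifying that a unilateral deviation cannot do better by being \emph{non-smooth} or by exploiting the kinks at the breakpoints $\gamma_k$, and ruling out ``mixed'' deviations that track $\bar{S}$ on some price sub-intervals and the cap $Q_i^r$ on others in a way not captured by a single constant $\bar{c}_1^{\mathrm{BR}}$. Handling this requires showing the payoff \eqref{RT:Payoff}, viewed as a functional of the deviation, is strictly quasiconcave along the relevant direction, so that the FOC \eqref{RT:ODE} together with the two boundary conditions at $p\to 0^+$ is not just necessary but sufficient for the global best response; the strict monotonicity of the $\bar{c}_1$-family in step~(2) is what ultimately converts ``the best response lies in the family'' into ``there is a unique equilibrium.'' I would also need to check that the unique selected $\bar{c}_1$ indeed satisfies Proposition~\ref{RT:Existence}(i)--(ii) (it does, being the boundary point), so that the equilibrium is not vacuous.
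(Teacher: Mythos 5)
Your step~(2) — increasing $\bar{c}_1$ makes the stitched $\bar{S}(\cdot;\bar{\bm{c}})$ pointwise steeper and hence lowers the market-clearing price $p_s^*$ — matches the paper exactly and is the right launching point. But your steps~(3)--(4) diverge from the paper's argument in a way that, as proposed, would not deliver the result.

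The route you describe is a best-response fixed-point argument: compute supplier $i$'s best response against the others playing $\min\{\bar{S}(\cdot;\bar{\bm{c}}),Q_j^r\}$, observe that it solves the same ODE up to an integration constant $\bar{c}_1^{\mathrm{BR}}$, and then close by requiring $\bar{c}_1^{\mathrm{BR}}=\bar{c}_1$ and claiming this singles out one value. The obstacle is not the one you flag (non-smooth or ``mixed'' deviations exploiting kinks at $\gamma_k$). It is earlier: the ``if'' direction of Proposition~\ref{RT:Existence} already establishes that, for \emph{any} admissible $\bar{c}_1$, the best response of supplier $i$ against the profile $\min\{\bar{S}(\cdot;\bar{\bm{c}}),Q_j^r\}$ is $\min\{\bar{S}(\cdot;\bar{\bm{c}}),Q_i^r\}$ with the \emph{same} constants. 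In other words, the fixed-point equation $\bar{c}_1^{\mathrm{BR}}=\bar{c}_1$ holds identically along the whole one-parameter family, so your selection criterion is vacuous and would leave you with a continuum of equilibria, not a unique one. Your step~(3) as written cannot hold; a single supplier's deviation problem does not discriminate among the $\bar{c}_1$'s.

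What the paper actually does is an equilibrium-\emph{selection} argument based on payoff monotonicity along the family, not a deviation argument. It treats $\bar{c}_1$ as the coordinate on the equilibrium manifold, writes the $i$-th supplier's on-equilibrium RT payoff as $\Pi^{\mathrm{RT}}_i(\bar{c}_1)$ (accounting for the induced change in $p_s^*(\bar{c}_1)$ and the shift in $\bar{S}^*_{-i}$ through the downstream constants $\bar{c}_k(\bar{c}_1)$), differentiates in $\bar{c}_1$, and shows the derivative is strictly negative: one term is the (non-negative, by FOC/complementary slackness) factor multiplying $\mathrm{d}p_s^*/\mathrm{d}\bar{c}_1<0$, and the other is $-p_s^*$ times a strictly positive term $\partial \bar{S}^*_{-i}/\partial\bar{c}_k\cdot \mathrm{d}\bar{c}_k/\mathrm{d}\bar{c}_1$. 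Since every supplier's payoff is strictly decreasing in $\bar{c}_1$, the unique SFE is the candidate with the smallest admissible $\bar{c}_1$ — the Pareto-dominant (and, economically, the ``flattest'' / highest-markup) member of the family. So the paper's ``uniqueness'' is a refinement claim resolved by a comparative-statics computation, whereas your proposal is built around a deviation/fixed-point closure that the structure of the candidate set does not support. If you want to retain your framing, you would need to replace step~(3) with this payoff-monotonicity-in-$\bar{c}_1$ computation and argue selection rather than elimination-by-deviation.
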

The intuition behind Proposition~\ref{RT:Uniqueness} is as follows. A ``flatter'' supply function means that the quantity offered changes little as the price increases.  Renewable suppliers prefer such flatter supply functions because they yield a higher market-clearing price for the same quantity of electricity supplied. Consequently, the SFE corresponds to the smallest value of the constant $\bar{c}_1$ that still produces a valid $\bar{S}$ within the candidate set of SFEs.

% than the latter. 
% While Proposition \ref{RT:Existence} requires
% sufficiently large constants for increasing and continuous supply
% functions, Proposition \ref{RT:Uniqueness} indicates that renewable
% suppliers also seek to reduce the constants to maximize their
% profits. Consequently, they select the smallest constants that validate
% their supply functions, which leads to the uniqueness of SFE.  

Appendix~\ref{sec:robustmodelass} analyzes variants of Case 2 by relaxing
Assumption \ref{Ass:RT} and shows that the main results and insights are
robust against those assumptions. Specifically, for a (strictly) positive
minimum residual demand $\underline{D}_r$, the $k$-th {renewable supplier adopts one of the two bidding strategies: (i) symmetric supply curves originating from a unique price–quantity pair $(p_k, \bar{S}(p_k))$, with $p_k>0$ and $\bar{S}(p_k)$ strictly below the residual capacity; or
(ii) full utilization of residual capacity throughout the RT market when $\bar{S}(p_k)$ exceeds the residual capacity.}
%it is shown that renewable suppliers either bid symmetric supply curves, starting from a unique price quantity pair $(p_k, \bar{S}(p_k))$ given $p_k>0$ and $\bar{S}(p_k)$ less than their residual capacities, or just their residual
%capacities for the entire RT market. 
Moreover, if the maximum residual demand $\overline{D}_r$ can
be met without reaching $Q_{N_S}^r$, renewable suppliers would stop
increasing $\bar{c}_1$ when the total supply satisfies $\overline{D}_r$.\looseness = -1 

\subsection{DA Market}
\label{ssc:DAAnalysis}
In the DA market, the strategic participants include renewable suppliers, the LSE, and the virtual traders. In this section, we analyze a baseline model featuring  strategic renewable suppliers and the LSE only. The impacts of virtual trading are considered in the next section.

Renewable suppliers bid symmetric supply functions as stated in \eqref{RT:Symmetry}. For the $i$-th renewable supplier, we can rewrite its payoff \eqref{DA:RS_Obj} using the DA market clearing condition \eqref{DA:MarketClearing} as follows:
\begin{equation}
\label{DA:RS_OptObj}
\bigg(D_{l}-C(p_{f}^*)-S_{-i}(p_{f}^*)\bigg)p_{f}^*+\mathbb{E}\bigg[\bar{S}^*_i(p_{s}^*)p_{s}^*+ \big(Q_i-S_i(p_f^*)\big)^{-}p_s^*\bigg],
\end{equation}
where 
\begin{equation}
S_{-i}(x):=\sum_{j\in\mathcal{S}\setminus\{i\}}S_j(x)
\label{eq:supplmini}
\end{equation}
represents the total supply at price $x$ from all renewable suppliers except for the $i$-th one. As with the RT market, we suppress the time dependence in $D_l$, $p_f^*$, and $p_s^*$. %(the renewable supplier's objective function \eqref{DA:RS_Obj} holds at different times in $\mathcal{T}$.
%; $\bar{S}^*_i$ is the supplier's equilibrium supply function in the BA period; $p_s^*(t)$ is the spot price. 
%The next lemma shows the concavity of the above objective function.
%\begin{lemma}
%\label{DA:Concavity_RS}
%\eqref{DA:RS_OptObj} is strictly concave in $p_{f}^*(t)$.
%\end{lemma}
Similarly, the LSE's payoff \eqref{DA:LSE_Obj} can be rewritten as follows using the DA market clearing condition \eqref{DA:MarketClearing}:
\begin{equation}
\label{DA:LSE_OptObj}
\bigg(\sum_{i\in\mathcal{S}}S_i(p_f^*)+C(p_f^*)\bigg)p_f^*+\mathbb{E}\bigg[\bigg(D-\sum_{i\in\mathcal{S}}S_i(p_f^*)-C(p_f^*)\bigg)^{+}p_s^*\bigg].
\end{equation}
% The corresponding FOC is
% \begin{align}
% \bigg(\sum_{i=1}^{N_S}S'_i(p_f^*)+C'(p_f)\bigg)p_f + \bigg(\sum_{i=1}^{N_S}S_i(p_f)+C(p_f)\bigg) +\mathbb{E}\bigg[\bigg(-\sum_{i=1}^{N_S}S'_i(p_f)-C'(p_f)\bigg)p_s^*|D>D_l\bigg]\mathbb{P}(D>D_l)\notag\\
% +\mathbb{E}\bigg[\bigg(D-\sum_{i=1}^{N_S}S_i(p_f)-C(p_f)\bigg)^{+}(p_s^*)'\bigg] =0,\label{DA:LSE_OptObj}
% \end{align}
% where $(p_s^*)'$ is the derivative of $p_s^*$ w.r.t $p_f$.
%For virtual trading, we can rewrite the payoff \eqref{DA:VB_Obj} of the $v$-th virtual trader using the symmetry and the market clearing condition \eqref{DA:MarketClearing} as follows:
%\begin{equation}
%\label{DA:VT_OptObj}
%\bigg(\mathbb{E}\big[p_{s}^*\big]-p_{f}^*\bigg)\bigg(D_{l}+B_{-v}(p_{f}^*)-\sum_{i\in\mathcal{S}}S_i(p_{f}^*)-C(p_{f}^*)\bigg),
%\end{equation}
%where $B_{-v}(p_{f}^*):=\sum_{j\in\mathcal{V}\setminus\{v\}}B_j(p_{f}^*)$ is the total virtual load net the demand from the $v$-th virtual trader. 
%The next lemma shows that there exists a unique forward price that maximizes \eqref{DA:VT_OptObj}.\looseness = -1
%\begin{lemma}
%\label{DA:Concavity_VT}
%\eqref{DA:VT_OptObj} has a unique maximizer $p_{f}^*(t)$.
%\end{lemma}
%Based on Lemma \ref{DA:Concavity_RS} and \ref{DA:Concavity_VT}, 
The equilibrium supply function in the DA market is expected to satisfy the FOCs for \eqref{DA:RS_OptObj} and
\eqref{DA:LSE_OptObj} with respect to $p^*_f$, while also being continuous and monotonically
increasing. These FOCs result in ODEs for $S_i$ with respect to the independent variable $p^*_f$. The following proposition establishes the existence and
uniqueness of the SFE.\looseness = -1 
%\begin{proposition}[\textbf{Existence and uniqueness of SFE in the DA market}]
\begin{restatable}[\textbf{Existence and uniqueness of SFE in the DA
    market without virtual trading}]{proposition}{DAExistence} 
  \label{DA:Existence}
  Consider a market consisting of renewable and conventional suppliers and the LSE. Then there exists a unique SFE for the renewable suppliers in the DA market.
\end{restatable}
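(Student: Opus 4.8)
The plan is to run the backward induction one step further. Propositions~\ref{RT:Existence}--\ref{RT:Uniqueness} already determine the RT profile $\bar{\mathbb{S}}^*$, and hence the RT clearing price $p_s^*(t)$, as a measurable functional of the DA supply function and the realizations $(D,\mathbb{Q})$; I substitute this into the DA payoffs \eqref{DA:RS_OptObj}--\eqref{DA:LSE_OptObj} and use the symmetry reduction \eqref{DA:Symmetry}, so that the unknowns become a single increasing function $S$ with $S(0)=0$ together with the LSE demand curve $D_l$. The key structural fact is that, with inelastic demand, for fixed supply functions and fixed $t$ the clearing identity $N_S S(p_f^*(t)) + C(p_f^*(t)) = D_l(t)$ makes $D_l(t)\mapsto p_f^*(t)$ a strictly increasing bijection, so choosing $D_l(t)$ is equivalent to the LSE choosing the clearing price $p_f^*(t)$ directly; as $t$ sweeps $\mathcal{T}$ these prices trace out an interval, and both equilibrium conditions of Definition~\ref{Def:DA_Equilibrium} become pointwise conditions in the clearing price on that interval.

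Next I derive the two first-order conditions. Treating supplier $i$'s decision as selecting, at each attainable price, the quantity on its residual demand curve $D_l-C(\cdot)-S_{-i}(\cdot)$, I differentiate \eqref{DA:RS_OptObj} in $p_f^*$ and then impose symmetry to obtain a first-order ODE for $S$ in the price variable, structurally analogous to \eqref{RT:ODE}, with a one-parameter family of candidate solutions. The subtle point is the expectation term: the RT profit and the shortfall term $\big(Q_i-S_i(p_f^*)\big)^- p_s^*$ depend on $S$ both explicitly and through the residual capacities $Q_i^r=\min_t\max\{Q_i(t)-S(p_f^*(t)),0\}$, which is a functional of the entire path of $S$. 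I would invoke RT-stage optimality of $\bar S_i^*$ (an envelope argument) to kill the dependence channeled through $\bar S_i^*$, and argue that the remaining dependence enters monotonically, so that differentiation under the expectation is legitimate and the objective is concave in the supplier's own decision, validating the FOC as a genuine best response. In parallel, differentiating the reduced LSE cost \eqref{DA:LSE_OptObj} in $D_l$ (equivalently in $p_f^*$ via the monotone clearing map, accounting for the induced change in $p_s^*$) yields a second relation linking $p_f^*$, $\mathbb{E}[p_s^*]$ and the aggregate-supply slope; this is the mechanism behind strategic underbidding ($p_f^*<\mathbb{E}[p_s^*]$), and it selects the boundary condition for the supplier ODE.

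Existence and uniqueness then follow by assembling the two conditions into a single characterizing problem for $S$ on the price interval. Adapting the stitching-and-threshold construction behind Proposition~\ref{RT:Existence}, I would show that for a suitable range of the free constant the ODE admits a continuous, strictly increasing solution satisfying $S(0)=0$ and the required monotonicity; and adapting the monotonicity argument behind Proposition~\ref{RT:Uniqueness}, I would show that the LSE optimality relation holds for exactly one value of that constant, because the relevant residual is strictly monotone in the constant (a steeper $S$ lowers $p_f^*$ and $\mathbb{E}[p_s^*]$ at different rates), so an intermediate-value argument gives a unique crossing. Finally I verify the second-order conditions for both the supplier and the LSE so that the stationary solution is the unique global optimum on the relevant domain, and recover $D_l^*$ as the LSE's unique best response to $S^*$, completing the proof.

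I expect the main obstacle to be the non-local coupling between the DA supply function and the RT price distribution: since $Q_i^r$ — and through it the RT clearing price and the shortfall penalty — depends on $S$ evaluated along the entire trajectory $\{p_f^*(t):t\in\mathcal{T}\}$, the supplier's first-order condition is genuinely an integro-differential relation rather than a clean ODE, and making differentiation under the expectation rigorous while simultaneously establishing concavity of the objective is where the real work lies; by contrast, the candidate construction and the intermediate-value selection of the constant are controlled adaptations of the RT-market analysis already carried out in Propositions~\ref{RT:Existence}--\ref{RT:Uniqueness}.
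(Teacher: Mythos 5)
Your high-level architecture matches the paper's: backward induction into the RT solution, reduction to a single symmetric supply function $S$ via \eqref{DA:Symmetry}, the bijection $D_l(t)\leftrightarrow p_f^*(t)$ through the clearing identity, and two first-order conditions (one from \eqref{DA:RS_OptObj}, one from \eqref{DA:LSE_OptObj}) yielding ODEs in the price variable. The existence step you sketch, however, is not what the paper does: rather than the capacity-indexed ``stitching'' of Proposition~\ref{RT:Existence}, the paper (i) proves local existence/uniqueness of trajectories through any interior point via Lipschitz estimates on the two ODE vector fields (following Lemma~EC.1 of \citealt{sunar2019strategic}) and then (ii) multiplies the LSE FOC by $(N_S-1)(p_f-\mathbb{E}[p_s^*\mid Q_i<S_i]\mathbb{P}(Q_i<S_i))$, multiplies the supplier FOC by $(p_f-\mathbb{E}[p_s^*])$, adds them to get a single scalar relation \eqref{App:Prop3_Cond}, and runs an intermediate-value argument on \emph{that} to exhibit a point where both FOCs hold with matching slopes. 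That is a different, and more delicate, construction than the stitching you propose.

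The genuine gap is in your uniqueness mechanism. You claim the LSE optimality relation selects the free constant in the supplier ODE via a unique crossing. That is not the paper's argument, and it is in fact at odds with the paper's own text: at the end of \S\ref{ssc:DAAnalysis} the paper states explicitly that, absent virtual trading, ``the optimal demand curve $D_l^*$ may not be unique.'' If the LSE FOC could pin down a unique constant, $D_l^*$ would be unique too. What the paper actually does is solve the supplier ODE near $p_f\to 0^+$ (where the shortfall terms are higher-order), obtain the one-parameter family \eqref{App:Sol_DA} indexed by $c_s$, then compute $\mathrm{d}\Pi^{\mathrm{DA}}_i/\mathrm{d}c_s$ and show it is strictly negative by reusing the envelope-style cancellation (the bracketed FOC terms vanish, and the residual $-p_f^*\sum_{j\neq i}\partial S_j^*/\partial c_s<0$). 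Hence each renewable supplier strictly prefers the smallest admissible $c_s$, exactly mirroring Proposition~\ref{RT:Uniqueness}, and \emph{that} is what pins the SFE down. Your ``LSE selects the constant'' step should be replaced by this supplier-side monotonicity-in-$c_s$ argument; as written, your proposal would not close because the LSE's problem has no bite on $c_s$.

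Your diagnosis of the non-local coupling through $Q_i^r$ and the RT price distribution is fair — but note the paper sidesteps it by suppressing the dependence of $\bar{\mathbb{S}}^*$ on $(\mathbb{S},\mathbb{Q},D)$ and by working in the $p_f\to 0^+$ asymptotic regime where the coupling terms are $o(p_f)$, rather than by the envelope/concavity program you outline; so that part of your ``main obstacle'' discussion is more rigorous in intent than the paper's actual resolution, but it is orthogonal to the missing uniqueness mechanism.
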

% hl{Do you mean unique \emph{optimal} SFE -- as in Proposition~2? Or, do
%   you mean that there is a unique SFE in the DA market?}
% {\color{blue}The SFE refers to the optimal equilibrium where the payoffs
%   are maximized, as in Prop. 2}
% \hl{I am still confused here. In Prop 2 we define a family of SFEs --
%   there isn't a unique SFE. Why not use the same terminology here? I have
%   made and edit. Let me know if this works.}
% {\color{blue} We have a family of SFEs because the constant in the
%   solution is not determined. In Prop 2, we specify the uniqueness of the
%   SFE by choosing the smallest feasible constant. That is, we pick up the
%   SFE that maximizes the supplier's payoff from a family of SFEs. For Prop
%   3, the analysis is more complex than Prop 2 because (i) we need to
%   consider not only renewable suppliers but also the LSE, and (ii) the
%   related ODEs are no longer linear.}

The existence result follows from identifying a common solution to the FOCs for both the renewable suppliers and the LSE. The proof of uniqueness parallels that of Proposition~\ref{RT:Uniqueness}.
To ensure the supply function is continuous and increasing, the constant in the solution must be sufficiently large. However, as in the RT market, renewable suppliers maximize their payoffs by adopting relatively flat supply functions—corresponding to small constants. The equilibrium therefore reflects a trade-off, with suppliers selecting the smallest constant that still guarantees continuity and monotonicity.

%Virtual traders need sufficiently small constants for decreasing bidding functions but prefer large constants to acquire more electricity at the same price. Unlike the RT market, it is challenging to derive explicit formulas for supply and bidding functions in the DA market. Nevertheless, the existence and uniqueness of the equilibrium can still be established.

Based on the equilibrium decisions of % suppliers and virtual traders
renewable suppliers, 
% in
% Proposition \ref{DA:Existence}, 
the LSE can anticipate its procurement
cost for a given demand curve $D_l$ in the DA market. Therefore, the LSE chooses
% choose the optimal
a
DA demand curve $D_l^*$ that minimizes its total cost. 
%Combining the SFE established in Proposition \ref{DA:Existence} with the optimal demand $D^*_l$ gives a DA equilibrium in Definition \ref{Def:DA_Equilibrium}.
However, the % DA equilibrium may not be unique due to the potential
% existence of
% multiple
optimal demand curve $D^*_l$ may not be unique, leading to multiple DA equilibria. Note that the above
results are established in markets where virtual trading is absent. In the
next section, we will see that the optimal demand curve $D^*_l$ % becomes
is
unique when % virtual traders
% are introduced and the
there is perfect 
competition among virtual traders.  % is perfect. 
\section{Impact of Virtual Trading on the Two Settlement Market}
\label{sc:Eff_VT}
%In this section, we investigate the effects of virtual trading on the two settlement electricity market. 
We examine two scenarios: with and without renewable suppliers.\footnote{Virtual trading was introduced into the wholesale electricity market before the widespread adoption of renewable power.} We first show that the LSE can exercise market power when the conventional supplier is nonstrategic, and that virtual trading mitigates this power. We then introduce strategic renewable suppliers and analyze how their presence impacts the equilibrium strategy of both the LSE and virtual traders.

In the % first
scenario % the market does not contain
without 
renewable
suppliers, the demand from the LSE is solely fulfilled by the nonstrategic
conventional supplier, i.e. the  model % for this scenario 
is identical to the one
in \S\ref{sc:Model} % except that
with 
all renewable supply functions % are 
set to
zero. % The following proposition shows that, in such situation, the LSE
% will exert its market power in the absence of virtual
% trading.
\looseness=-1  
%\begin{proposition}
\begin{restatable}{proposition}{DAPriceGapVanilla}
\label{DA:PriceGap_Vanilla}
In a two-settlement electricity market comprising only the LSE and a conventional supplier, the equilibrium features strategic underbidding by the LSE, yielding
\[
p_f^* < \mathbb{E}[p_s^*].
\]\looseness = -1
\end{restatable}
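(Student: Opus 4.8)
The plan is to reduce the LSE's decision to a one-dimensional convex minimization, characterize the equilibrium reservation by a first-order condition, and read the price gap directly off that condition. With no renewable suppliers and no virtual traders, the day-ahead clearing condition~\eqref{DA:MarketClearing} collapses to $C(p_f^*(t)) = D_l(t)$, so $p_f^*(t) = C^{-1}(D_l(t))$, where $C^{-1}$ is well defined, continuous, strictly increasing, and \emph{convex} on $[0,\infty)$ (because $C$ is strictly increasing and weakly concave above the floor price $\underline p_c$, and the conventional supplier is uncapacitated). In the real-time market, on the event $\{D(t) > D_l(t)\}$ (Case~2 of \S\ref{sc:Model}) the clearing condition~\eqref{RT:MarketClearing} reads $\bar C(p_s^*(t)) = D(t)$ with $\bar C(p_s^*(t)) = \max\{C(p_s^*(t)),C(p_f^*(t))\}$; since $D(t) > D_l(t) = C(p_f^*(t))$, this forces $p_s^*(t) = C^{-1}(D(t)) > p_f^*(t)$. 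On the complementary event the LSE buys nothing in real time, and there I only use $p_s^*(t)\ge 0$ (the model excludes negative prices). The crucial structural fact is that on $\{D(t)>D_l(t)\}$ the realized RT price $C^{-1}(D(t))$ does \emph{not} depend on $D_l(t)$ --- this is the channel through which the LSE exercises market power.

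Fixing a delivery time $t$ and suppressing it, I would write the LSE's cost from~\eqref{DA:LSE_OptObj} as $g(D_l) := D_l\,C^{-1}(D_l) + \mathbb{E}\!\left[(D-D_l)^+ p_s^*\right]$. The map $D_l\mapsto D_l\,C^{-1}(D_l)$ is a product of nonnegative, nondecreasing, convex functions, hence convex, and every realization of $(D-D_l)^+ p_s^*$ is (piecewise linear and) convex in $D_l$; so $g$ is convex, and since $g(D_l)\to\infty$ as $D_l\to\infty$ its minimum is attained. I would then argue that any minimizer $D_l^*$ is strictly positive: bidding $D_l=0$ forces all procurement into the RT market at prices strictly above the floor $\underline p_c$, which the LSE strictly improves upon by reserving a small positive quantity, under the model's mild non-degeneracy of demand. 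Hence the optimum is interior (the minimizer need not be unique, but the argument below applies to any of them) and satisfies the first-order condition
\[
  C^{-1}(D_l^*) + D_l^*\,(C^{-1})'(D_l^*) \;=\; \mathbb{E}\!\left[\mathbbm{1}\{D > D_l^*\}\,C^{-1}(D)\right],
\]
where on the right I have used that $p_s^* = C^{-1}(D)$ on the event $\{D > D_l^*\}$, which lies in Case~2.

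Splitting $\mathbb{E}[p_s^*] = \mathbb{E}\!\left[\mathbbm{1}\{D>D_l^*\}\,p_s^*\right] + \mathbb{E}\!\left[\mathbbm{1}\{D\le D_l^*\}\,p_s^*\right]$, using $p_s^* = C^{-1}(D)$ on the first event, and substituting the first-order condition, I obtain
\[
  \mathbb{E}[p_s^*] - p_f^* \;=\; D_l^*\,(C^{-1})'(D_l^*) \;+\; \mathbb{E}\!\left[\mathbbm{1}\{D\le D_l^*\}\,p_s^*\right].
\]
Both terms on the right are nonnegative, and the first is \emph{strictly} positive since $D_l^*>0$ and $(C^{-1})'(D_l^*)=1/C'(p_f^*)\in(0,\infty)$; hence $p_f^*<\mathbb{E}[p_s^*]$. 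Equivalently, the LSE clears strictly less than the quantity $C\!\left(\mathbb{E}[C^{-1}(D)]\right)$ that would equate the DA price with the expected RT price --- this is the sense in which it ``strategically underbids'' --- and in particular $D_l^*<\overline D$, so a positive-probability shortfall is always left to the RT market. Applying the increasing map $C^{-1}$ turns the quantity statement into the displayed price inequality.

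The step I expect to be most delicate is the real-time bookkeeping rather than anything deep: one must treat the two cases of \S\ref{sc:Model} carefully so as to obtain $p_s^* = C^{-1}(D)$ on $\{D>D_l^*\}$ while using only $p_s^*\ge 0$ on the complement (minding the $\max$ inside $\bar C$ and the behavior of $C^{-1}$ near $\underline p_c$), and one must rule out the corner bid $D_l^*=0$, which relies on the non-degeneracy of the demand distribution. A minor subtlety: if $D$ has an atom at $D_l^*$, the first-order condition becomes a one-sided subgradient inequality, but using the left-derivative bound $\partial_- g(D_l^*)\le 0$ the displayed identity holds as ``$\ge$'' in the direction needed, leaving the conclusion unchanged. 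Everything else is the short convexity-plus-first-order-condition computation above.
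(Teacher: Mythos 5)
Your proof is correct and follows essentially the same route as the paper's: reduce the problem to a one-dimensional optimization over the LSE's DA bid, note $p_f^* = C^{-1}(D_l)$ and $p_s^* = C^{-1}(D)$ on the positive-residual-demand event, derive the first-order condition, and read off $\mathbb{E}[p_s^*]-p_f^* = D_l^*\,(C^{-1})'(D_l^*) + (\text{nonnegative term}) > 0$. The only difference is that you add supporting rigor the paper omits — the convexity of $g$, a careful subgradient remark for atoms, and keeping $\mathbb{E}[\mathbbm{1}\{D\le D_l^*\}\,p_s^*]\ge 0$ rather than asserting it is zero as the paper does — none of which changes the substance of the argument.
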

%\end{proposition}
Proposition \ref{DA:PriceGap_Vanilla} highlights the LSE's market power in
a two settlement electricity market 
without 
renewable
suppliers and virtual traders. In this setting, the LSE can depress the DA market clearing price by bidding below its expected RT demand. This strategic underbidding, which deviates from the truthful bidding expected by the ISO, allows the LSE to secure part of its demand at prices below the marginal cost implied by the conventional supplier’s supply function. In effect, the LSE’s cost savings come directly at the expense of the conventional supplier.
\begin{figure}[h!]
\centering
\includegraphics[scale=0.9, trim=0 0 50 0]{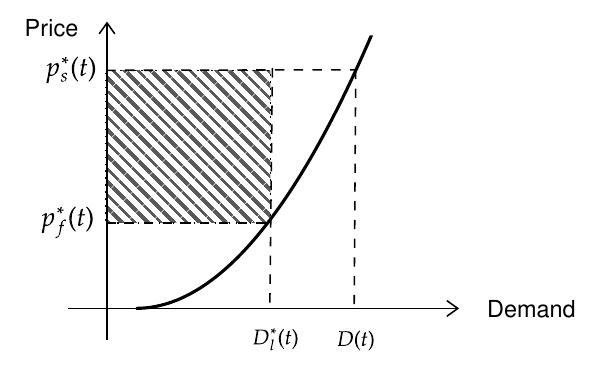}
\caption{The LSE's underbidding behavior when renewable suppliers and virtual traders are absent.}
\label{fig:MarketPowerLSE}
\end{figure}
%\begin{figure}[h!]
%\centering
%\begin{subfigure}[b]{0.45\textwidth}
%\centering
%\includegraphics[scale=0.7, trim=0 20 0 0]{Figures/Intuition.pdf}
%\caption{}
%\label{fig:EffectivenessVT_No_RS_a}
%\end{subfigure}
%\hfill
%\begin{subfigure}[b]{0.45\textwidth}
%\centering
%\includegraphics[scale= 0.7, trim=40 20 0 0]{Figures/Intuition_Aligned.pdf}
%\caption{}
%\label{fig:EffectivenessVT_No_RS_b}
%\end{subfigure}
%\caption{Market power of the LSE in the absence of virtual trading}
%\label{fig:EffectivenessVT_No_RS}
%\end{figure}

Figure \ref{fig:MarketPowerLSE} % intuitively
explains the rationale % behind
for
Proposition~\ref{DA:PriceGap_Vanilla}. It illustrates the ex-post payoff of an LSE with positive residual demand. 
The LSE’s total payment in Figure~\ref{fig:MarketPowerLSE} equals the sum of its expenditures in the DA and RT markets,  
\[
p_f^*(t) D_l^*(t) + p_s^*(t) \big(D(t) - D_l^*(t)\big),
\]
where \(D(t) > D_l^*(t)\). 
This amount is strictly less than \(p_s^*(t) D(t)\), which the LSE would pay if it procured its entire demand \(D(t)\) directly from the conventional supplier. 
The difference---shown as the shaded rectangular area---represents the LSE’s savings, or equivalently, the conventional supplier’s loss. 
Thus, the LSE profits from exercising market power. 
In practice, such underbidding causes a price gap between DA and RT markets, which is highly detrimental to the market’s long-term efficiency \citep[page~26]{pjm2015virtual}.\looseness =-1 

When $p_f^*<\mathbb{E}[p_s^*]$, virtual traders are incentivized to submit
DEC bids to take advantage of the arbitrage opportunity between the DA and
RT markets, i.e., purchase electricity at the DA price and cash settle the purchased amount
at the RT price with an expected profit margin
$\mathbb{E}[p_s^*]-p_f^* > 0$.
The DEC bids are virtual loads, and they 
% they act as
% virtual loads in the DA market to compete with the LSE. These traders  
% . However, as the volume of DEC bids increases,
% the total load, i.e., the physical load plus the virtual load, in the DA
% market rises and the residual demand in the RT market decreases. This
push up the DA market clearing price, and pull down the expected price
in the RT market. As a result, the profit margin for virtual traders,
alongside the marginal saving for the LSE, is reduced. In the limit of on
infinite number of traders,  
virtual trading % effectively
aligns these two prices, as shown by the
following % proposition
result.
%If the LSE underbids, it would increase the expected demand procured in
%the BA period, which in turn increases both $p_f^*(t)$ and
%$\mathbb{E}[p_s^*(t)]$. Consequently, the LSE ends up with a higher
%procurement cost. In such a situation, the optimal decision for LSE is to
%bid its true estimated demand at time $t$, as illustrated in Figure
%\ref{fig:EffectivenessVT_No_RS}(b). 
%\begin{proposition}
\begin{restatable}{proposition}{DAPriceAlignmentVanilla}
\label{DA:PriceAlignment_Vanilla}
In a two settlement electricity market with the LSE, the 
conventional supplier, and an infinite number of virtual traders, the DA
market clearing price equals the expected RT market clearing price, i.e.,  $p_f^*=\mathbb{E}[p_s^*]$.
\end{restatable}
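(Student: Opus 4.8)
The plan is to pin $\mathbb{E}[p_s^*(t)]$ down as a constant exogenous to the DA stage, then reduce each virtual trader's problem to a scalar optimization with a tractable first-order condition, and finally let the number $N_V$ of virtual traders grow, squeezing the DA--RT price gap to zero. \textbf{Step 1 (the RT price is exogenous).} Setting all renewable supply functions to zero in the real-time clearing rules of \S\ref{sc:Model} and \S\ref{ssc:RTAnalysis}, the curtailment rule of Case~1 and the market-clearing condition \eqref{RT:MarketClearing} of Case~2 both collapse to $C(p_s^*(t))=D(t)$; hence $p_s^*(t)=C^{-1}(D(t))$ on every realization, so that
\[
\mathbb{E}[p_s^*(t)]=\mathbb{E}\big[C^{-1}(\tilde{D}(t))\big]=:\pi_t
\]
is a constant independent of every DA-stage bid. (Concavity of $C$ gives $\pi_t\ge C^{-1}(\mathbb{E}[\tilde{D}(t)])$, consistent with the strict gap of Proposition~\ref{DA:PriceGap_Vanilla}; here the gap is instead closed from above.)

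\textbf{Step 2 (reduced problem of a virtual trader).} Fix a time $t$, the LSE bid $D_l^*(t)$, and the other traders' bids $\mathbb{B}_{-v}$. With zero renewable supply, \eqref{DA:MarketClearing} reads $C(p_f^*(t))=D_l^*(t)+B_v(p_f^*(t))+\sum_{u\neq v}B_u(p_f^*(t))$, so, exactly as in the supply-function reduction used throughout \S\ref{sc:Analysis}, submitting a feasible bid function $B_v$ amounts to choosing the clearing price $p$ in an interval, with the cleared quantity pinned to the residual slot $q_v(p):=C(p)-D_l^*(t)-\sum_{u\neq v}B_u(p)$. The payoff \eqref{DA:VB_Obj} then becomes $\max_p\,(\pi_t-p)\,q_v(p)$; since $q_v$ is increasing while $\pi_t-p$ is decreasing and changes sign at $p=\pi_t$, the optimum is interior and solves the first-order condition $(\pi_t-p)\,q_v'(p)=q_v(p)$.

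\textbf{Step 3 (symmetric equilibrium and the limit).} Imposing symmetry $B_u\equiv B$, market clearing gives $N_V B(p_f^*(t))=C(p_f^*(t))-D_l^*(t)$, so at the equilibrium price the slot equals $q_v(p_f^*(t))=B(p_f^*(t))$ and $q_v'(p_f^*(t))=C'(p_f^*(t))-(N_V-1)B'(p_f^*(t))$. Substituting into the first-order condition,
\[
\pi_t-p_f^*(t)=\frac{B(p_f^*(t))}{C'(p_f^*(t))-(N_V-1)B'(p_f^*(t))}.
\]
Individual rationality of each trader (the constant bid $B_v\equiv0$ is feasible and earns zero) together with nonnegativity of the aggregate virtual demand $C(p_f^*(t))-D_l^*(t)$ forces $p_f^*(t)\le\pi_t$, hence $C(p_f^*(t))\le C(\pi_t)\le\overline{D}$ and $B(p_f^*(t))\le\overline{D}/N_V$. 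Moreover $p_f^*(t)\in(\underline{p}_c,\,C^{-1}(\overline{D})]$ (the LSE would not clear at $p_f^*(t)\le\underline{p}_c$ when $\pi_t>\underline{p}_c$), so concavity of $C$ yields $C'(p_f^*(t))\ge C'\big(C^{-1}(\overline{D})\big)=:c_0>0$, and since $B'\le0$ the denominator is at least $c_0$. Therefore
\[
0\le\pi_t-p_f^*(t)\le\frac{\overline{D}}{N_V\,c_0},
\]
and letting $N_V\to\infty$ gives $p_f^*(t)=\pi_t=\mathbb{E}[p_s^*(t)]$, as claimed.

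\textbf{Expected main obstacle.} The delicate part is Step~2 --- making the supply-function reduction for the virtual trader rigorous: that the residual-slot map covers an interval containing the equilibrium price, that the nonnegativity and monotonicity constraints on $B_v$ are slack at the optimum, and, above all, that a symmetric DA equilibrium with $N_V$ virtual traders and the required differentiability exists at all. The bounds in Step~3 --- $p_f^*(t)$ bounded away from $\underline{p}_c$, and aggregate virtual demand below $\overline{D}$ --- are likewise not free: they rely on the traders' individual rationality and on the LSE's incentive not to over-reserve in the DA market, arguments parallel to those behind Proposition~\ref{DA:PriceGap_Vanilla} but that must be re-established with virtual bidders present.
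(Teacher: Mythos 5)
Your plan follows the paper's own line of attack---write the virtual trader's first-order condition, impose symmetry among traders, bound the per-trader cleared quantity and let $N_V\to\infty$---and Steps~2--3 match the paper's proof (including the bound the paper proves separately as Lemma~\ref{DA:BoundedVB}, which you re-derive in Step~3). However, Step~1 contains a genuine error: $\mathbb{E}[p_s^*(t)]$ is \emph{not} exogenous to the DA stage. When the realized demand satisfies $D(t)\le C(p_f^*(t))$ you are in Case~1 of the model, the RT market is inactive, and the RT price is $p_s^*(t)=0$, not $C^{-1}(D(t))$; the paper makes this explicit in its proof of Proposition~\ref{DA:PriceGap_Vanilla}, writing
\[
\mathbb{E}[p_s^*(t)]=\mathbb{E}\big[\bar{C}^{-1}(D(t))\,\big|\,D(t)>q(t)\big]\,\mathbb{P}\big(D(t)>q(t)\big).
\]
The threshold of the indicator moves with $p_f^*(t)$, so $\mathbb{E}[p_s^*(t)]$ is a function of the DA price (decreasing in it). Your ``$\pi_t$ is a constant independent of every DA-stage bid'' is therefore false, and the aside about the gap closing ``from above'' rests on this same misidentification.

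The downstream consequence is that your Step~2 FOC, $(\pi_t-p)\,q_v'(p)=q_v(p)$, silently drops the sensitivity term. Differentiating $(\mathbb{E}[p_s^*]-p)\,B_v(p)$ correctly gives, after the usual market-clearing substitution, exactly the relation the paper works with:
\[
\big(\mathbb{E}[(p_s^*)']-1\big)\,B_v(p_f^*)
=\big(\mathbb{E}[p_s^*]-p_f^*\big)\big(B_{-v}'(p_f^*)-C'(p_f^*)\big),
\]
where the extra factor $\mathbb{E}[(p_s^*)']$ is what the paper bounds via Lemma~\ref{App:p_s_derivative}. Fortunately the gap is repairable without changing your plan: once you establish that $\big|\mathbb{E}[(p_s^*)']-1\big|$ is bounded (the paper's Lemma~\ref{App:p_s_derivative}), your Step~3 squeeze $0\le\mathbb{E}[p_s^*]-p_f^*\le M\,\overline{D}/(N_V\,c_0)$ still closes, with $M$ that bound. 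So the conclusion is correct and the approach is essentially the paper's, but Step~1 as written would not survive scrutiny and must be replaced by the boundedness argument rather than the exogeneity claim.
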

%\end{proposition}
The price convergence is driven by perfect competition among an infinite number of virtual traders.
With only finitely many traders, the price gap would persist, as each trader balances the size of virtual positions against the price differential to maximize the product of the two.
Once DA and RT prices align, both the marginal profit of virtual traders and the marginal savings of the LSE fall to zero.

Next, we introduce
renewable suppliers % nto considerations to
% % analyze
% understand
% their influence on the equilibrium behavior of the LSE and virtual
% traders.
into the market.
The following % proposition
result
shows that the LSE maintains its market
power % in the new scenario if
when 
virtual trading is not allowed.\looseness =-1 
%\begin{proposition}
\begin{restatable}{proposition}{DAPriceGap}
\label{DA:PriceGap}
In a two-settlement electricity market with the LSE, renewable suppliers, and the conventional supplier---but no virtual bidders---the LSE strategically underbids in the DA market at equilibrium, driving the DA clearing price below the expected RT clearing price, i.e., \( p_f^* < \mathbb{E}[p_s^*] \).\end{restatable}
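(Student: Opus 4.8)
The plan is to follow the template of Proposition~\ref{DA:PriceGap_Vanilla}, the additional difficulty being that once renewable suppliers are present the RT clearing price reacts to the LSE's DA bid.

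\emph{Step 1 (reduce the LSE's problem to a one‑dimensional optimization).} By Proposition~\ref{DA:Existence} the renewable suppliers' DA supply functions form a unique SFE $(S_i^*)_{i\in\mathcal{S}}$; write $R(p):=\sum_{i\in\mathcal{S}}S_i^*(p)+C(p)$ for the aggregate DA supply curve, which is continuous and strictly increasing. Since no virtual bidders are present, the DA market‑clearing condition~\eqref{DA:MarketClearing} reads $R(p_f^*)=D_l$, so the LSE's choice of $D_l$ is equivalent to choosing the DA clearing price $p_f^*=R^{-1}(D_l)$. Substituting $D_l=R(p_f^*)$ into~\eqref{DA:LSE_OptObj} and using Propositions~\ref{RT:Existence}--\ref{RT:Uniqueness} to express the RT equilibrium price as a measurable function $p_s^*=p_s^*(p_f^*;D,\mathbb{Q})$ of the DA price and the realized demand and capacities, the LSE minimizes
\[
\Phi(p_f^*):=p_f^*\,R(p_f^*)+\mathbb{E}\!\left[\bigl(D-R(p_f^*)\bigr)^{+}p_s^*\bigl(p_f^*;D,\mathbb{Q}\bigr)\right],
\]
where the expectation is over $(D,\mathbb{Q})$. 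Standard arguments (Case~2 occurs with positive probability, and $\Phi$ is continuous with the appropriate coercivity on the admissible price range) give an interior minimizer, and the conclusion will be shown to hold at every such minimizer, which is enough since, as noted in \S\ref{ssc:DAAnalysis}, $D_l^*$ need not be unique.

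\emph{Step 2 (first‑order condition).} Differentiating $\Phi$ --- applying the envelope theorem to each renewable supplier's RT problem~\eqref{RT:RS_Obj} so that the induced change in $\bar S_i^*$ may be ignored and the only relevant dependence of $p_s^*$ on $p_f^*$ is the direct one through the residual demand $D_r=D-R(p_f^*)$, the residual capacities $Q_i^r$ of~\eqref{RT:ResidualCapacity}, and the conventional floor $\bar C(\cdot)=\max\{C(\cdot),C(p_f^*)\}$ --- and rearranging the stationarity condition $\Phi'(p_f^*)=0$ using $R(p_f^*)=D_l$ and $R'>0$ yields
\[
p_f^*=\mathbb{E}\!\left[\mathbf{1}\{D>D_l\}\,p_s^*\right]-\frac{D_l}{R'(p_f^*)}-\frac{1}{R'(p_f^*)}\,\mathbb{E}\!\left[(D-D_l)^{+}\,\partial_{p_f^*}p_s^*\right].
\]
The first term is at most $\mathbb{E}[p_s^*]$ because $p_s^*\ge 0$; the second term is strictly negative because $D_l>0$ and $R'>0$, and it is exactly the inframarginal ``market‑power'' effect depicted in Figure~\ref{fig:MarketPowerLSE}: by shaving its DA bid the LSE lowers $p_f^*$ and hence the price paid on \emph{all} of its DA units. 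Hence it remains only to control the third term, the pass‑through of the DA bid onto the RT price.

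\emph{Step 3 (comparative statics of the RT price --- the crux).} This is where the argument genuinely extends Proposition~\ref{DA:PriceGap_Vanilla}, in which renewable supply is absent, $p_s^*$ does not depend on $p_f^*$, and the third term vanishes identically. Here I would use the explicit structure of the RT SFE from Proposition~\ref{RT:Existence} (the piecewise ODE solutions~\eqref{RT:Sol_0}--\eqref{RT:Sol_1} stitched at the breakpoints $\gamma_k$) together with the equilibrium selection in Proposition~\ref{RT:Uniqueness} to show that a unit increase in $p_f^*$ lowers the residual demand by $R'(p_f^*)$ while lowering the \emph{total} residual capacity $\sum_i Q_i^r$ by only $R'(p_f^*)-C'(p_f^*)<R'(p_f^*)$, so that a larger DA reservation (weakly) lowers the RT price on the events where no renewable RT cap binds, $\partial_{p_f^*}p_s^*\le 0$, with the quantitative bound $\mathbb{E}\!\left[(D-D_l)^{+}\,\bigl|\partial_{p_f^*}p_s^*\bigr|\right]<D_l$ holding at the LSE optimum --- intuitively, the LSE's DA bid is levered by its \emph{entire} inframarginal quantity $D_l$, whereas its pass‑through onto the RT price is diluted across the $N_S\ge 2$ renewable suppliers and the (infinitely elastic in the aggregate) conventional supplier. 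On any event where a renewable RT cap binds and instead $\partial_{p_f^*}p_s^*\ge 0$, that contribution only reinforces the inequality, so no bound is needed there. Plugging this into the displayed identity makes the sum of the last two terms strictly negative, giving $p_f^*<\mathbb{E}[p_s^*]$. The main obstacle is precisely this quantitative comparative‑statics estimate on $\partial_{p_f^*}p_s^*$ through the RT SFE; once it is in hand, the remainder mirrors the no‑renewables case.
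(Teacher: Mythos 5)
Your setup (Steps 1--2) is fine: with no virtual bidders, $R(p_f^*)=D_l$ indeed reduces the LSE's problem to a one-dimensional choice, and differentiating gives exactly the stationarity condition
\[
p_f^* = \mathbb{E}[p_s^*] - \frac{1}{R'(p_f^*)}\Big(D_l + \mathbb{E}\big[(D-D_l)^+\,\partial_{p_f}p_s^*\big]\Big),
\]
where I have used $p_s^*=0$ on $\{D\le D_l\}$ so your first term is actually $\mathbb{E}[p_s^*]$ exactly, not just ``at most.'' So everything hinges, as you say, on the sign of $D_l + \mathbb{E}[(D-D_l)^+\partial_{p_f}p_s^*]$. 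That is where the proposal stops being a proof: the ``quantitative comparative-statics estimate'' $\mathbb{E}[(D-D_l)^+|\partial_{p_f}p_s^*|] < D_l$ in Step 3 is asserted with a heuristic dilution argument (pass-through spread over $N_S\ge 2$ renewables and the conventional supplier) but is never derived from the RT SFE. Lemma~\ref{App:p_s_derivative} only gives you $-\infty<\partial_{p_f}p_s^*<0$ with no magnitude bound, and the explicit RT SFE~\eqref{RT:Sol_0}--\eqref{RT:Sol_1} does not make such a bound transparent, especially across the capacity breakpoints $\gamma_k$ and across realizations of $(D,\mathbb{Q})$; the events where renewable caps bind also reverse the sign, which you wave off. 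So there is a genuine gap, and it is not a routine one.

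The paper's own proof takes a structurally different route that avoids having to estimate $\partial_{p_f}p_s^*$ at all. It writes both the LSE's FOC~\eqref{App:Prop6_LSE_FOC} \emph{and} the sum of the renewable suppliers' FOCs, then substitutes the latter into the former. The substitution eliminates exactly the term you are struggling to bound: in the combined identity~\eqref{App:Lemma5_Cond} every term multiplying $(p_s^*)'$ pairs off so that its overall contribution is nonnegative once $p_f^*\ge\mathbb{E}[p_s^*]$ is hypothesized, leaving only the conventional-supplier term~\eqref{DA:CS_Cond} to control, and that is handled by a payoff-monotonicity argument rather than a derivative bound. Your proposal never uses the renewable suppliers' equilibrium FOC --- that is the missing idea. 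Without it you are left trying to close a quantitative inequality that the paper deliberately engineers out of the problem. If you want to pursue your decomposition, you should replace Step~3 by the suppliers' FOC substitution: it converts the sign of $D_l + \mathbb{E}[(D-D_l)^+\partial_{p_f}p_s^*]$ into the sign of expression~\eqref{DA:CS_Cond}, which is what the paper then shows is positive.
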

%\end{proposition}
Proposition \ref{DA:PriceGap} indicates that strategic renewable suppliers
cannot mitigate the LSE's market power that stems from the two settlement
structure. % Like the first scenario,
As in the case without renewable suppliers, 
the LSE % suppresses
is able to push down
the DA market
clearing price through underbidding, which results in  cost savings at the
expense of the renewable and conventional suppliers. 
%While renewable suppliers still earn positive payoffs due to their zero marginal costs, the conventional supplier is unable to cover its total cost because a portion of its supply is paid below the marginal production cost. 
The following % proposition suggests
result establishes
that, % an
in the limit of an
infinite number of virtual
bidders, % leads to a
% convergence of 
%
the
DA market clearing price converges to the expected RT market clearing, % as
% it does in the benchmark scenario that does not include
as in the case without
renewable suppliers. 
%Its savings are  from both the renewable and conventional suppliers. 
%\begin{proposition}
\begin{restatable}{proposition}{DAPriceAlignmentOne}
\label{DA:PriceAlignment_1}
In a two settlement electricity market that includes the LSE, the
renewable and conventional suppliers, and an infinite number of identical virtual
traders, the DA market clearing price equals the expected RT market
clearing, i.e., $p_f^*=\mathbb{E}[p_s^*]$. 
\end{restatable}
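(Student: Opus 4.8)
The plan is to follow the same route as the proof of Proposition~\ref{DA:PriceAlignment_Vanilla}, with the extra bookkeeping needed because the expected real-time price now also responds to the day-ahead price through the renewable suppliers' committed quantities and residual capacities. I would fix the renewable suppliers' day-ahead supply function (the unique one from Proposition~\ref{DA:Existence}) and the LSE's cost-minimizing response, and regard the day-ahead clearing price $p_f^*$ as the free variable, writing $\mathbb{E}[p_s^*]=:h(p_f^*)$, where for each realization of demand and capacities $p_s^*$ is the unique RT SFE price from Propositions~\ref{RT:Existence}--\ref{RT:Uniqueness}. Using \eqref{RT:MarketClearing}, the RT price clears $\sum_{i\in\mathcal{S}_r}\bar S_i(p_s^*)+\bar C(p_s^*)=D(t)-\sum_{i\in\mathcal{S}}\min\{S(p_f^*(t)),Q_i(t)\}$, whose right-hand side and whose residual capacities \eqref{RT:ResidualCapacity} depend on $p_f^*$ continuously and with bounded slope (the renewable supply function has bounded derivative on the relevant price interval, and $\tilde D,\tilde Q_i$ are uniformly bounded). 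Combining this with $C$ concave and strictly increasing, one shows that $h$ is continuous and $|h'|$ is bounded on the relevant price range; writing $\beta(p_f^*):=h(p_f^*)-p_f^*$ for the per-unit arbitrage margin, it then suffices to show that as the number of virtual traders grows the equilibrium forces $\beta\to 0$.

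Next I would take a sequence of day-ahead equilibria indexed by the number $N_V=n$ of identical virtual traders (whose existence is guaranteed by the DA equilibrium existence result) and, by symmetry, let $b^{(n)}$ denote the common cleared quantity at price $p_f^{*(n)}$, so that \eqref{DA:MarketClearing} gives $\sum_{i\in\mathcal{S}}S(p_f^{*(n)})+C(p_f^{*(n)})=D_l^{(n)}+n\,b^{(n)}$. A virtual trader can always secure essentially zero profit by bidding a vanishing volume, so at equilibrium the margin is nonnegative, $p_f^{*(n)}\le h(p_f^{*(n)})$; and by Proposition~\ref{DA:PriceGap} the all-zero profile is not an equilibrium, because there $\beta>0$ and a single trader gains by submitting a small DEC bid, hence $b^{(n)}>0$. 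Since $h$ is bounded above by the largest feasible RT price (itself bounded because RT demand is at most $\overline D$ and must be cleared by the conventional supplier), $p_f^{*(n)}$ is uniformly bounded by some $\bar p$, and therefore $n\,b^{(n)}=\sum_{i}S(p_f^{*(n)})+C(p_f^{*(n)})-D_l^{(n)}\le N_S\overline Q+C(\bar p)$, giving $b^{(n)}\to 0$.

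Then I would write the individual trader's first-order condition. Holding the other $n-1$ (symmetric) bids fixed, trader $v$'s cleared quantity $b$ pins down $p_f^*=\bigl(\sum_i S+C-D_l^{(n)}-(n-1)B^{(n)}\bigr)^{-1}(b)$, with derivative $1/G_n'(p_f^*)$ bounded above uniformly in $n$ because $G_n'\ge C'(p_f^*)$ is bounded away from zero on the relevant range; composing with $h$, the reduced margin $\phi(b):=h(p_f^*(b))-p_f^*(b)$ has $|\phi'|$ bounded uniformly in $n$. Maximizing the payoff $\phi(b)\,b$ yields $\beta^{(n)}=\phi(b^{(n)})=-\,b^{(n)}\,\phi'(b^{(n)})$, so $b^{(n)}\to 0$ together with the uniform bound on $|\phi'|$ forces $\beta^{(n)}\to 0$; passing to the limit and using continuity of $h$ gives $p_f^*=\mathbb{E}[p_s^*]$. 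Equivalently, in a continuum formulation each trader has zero price impact, $\phi$ is then constant in $b$, and a positive margin would make the trader's problem unbounded, forcing $\beta=0$ directly.

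The main obstacle is the uniform regularity of $h$ used in the last two paragraphs. Unlike in Proposition~\ref{DA:PriceAlignment_Vanilla}, where $\mathbb{E}[p_s^*]$ is an explicit function of $p_f^*$ through the conventional supplier alone, here it is an integral over the capacity distribution of the piecewise-defined RT SFE price of Proposition~\ref{RT:Uniqueness}, whose breakpoints $\gamma_k$ themselves move with $p_f^*$ through the residual capacities $Q_i^r$ in \eqref{RT:ResidualCapacity}. Showing that this composite map is continuous with a derivative that does not blow up as $n\to\infty$ is the technical heart of the proof; it rests on the smooth dependence of the RT SFE on its data established in Propositions~\ref{RT:Existence}--\ref{RT:Uniqueness}, together with the concavity and strict monotonicity of $C$ and the uniform bounds $\tilde D\in[\underline D,\overline D]$ and $\tilde Q_i\in[0,\overline Q]$.
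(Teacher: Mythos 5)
Your proposal is correct and follows essentially the same route as the paper, though with different bookkeeping. The paper writes the $v$-th virtual trader's first-order condition directly in terms of $p_f^*$, sums it over all $v$, divides by $N_V$, and observes that $\frac{1}{N_V}\sum_v B_v(p_f^*)$ vanishes because the aggregate DEC volume is bounded (their Lemma~\ref{DA:BoundedVB}) while $\mathbb{E}[(p_s^*)']$ is bounded (Lemma~\ref{App:p_s_derivative}); the margin then must vanish because the remaining factor $\frac{N_V-1}{N_V}\sum_v B_v'(p_f^*)-\sum_i S_i'(p_f^*)-C'(p_f^*)$ is bounded away from zero (strictly negative). You instead parameterize by a trader's cleared quantity $b$, first show explicitly that symmetry plus the same aggregate-volume bound forces $b^{(n)}\to 0$, and then read the FOC as $\beta^{(n)}=-b^{(n)}\phi'(b^{(n)})$ with $\phi'$ uniformly bounded. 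These are the same argument; yours is slightly more explicit about where the individual quantities go and why $|\phi'|$ stays bounded, while the paper's sum-and-divide move sidesteps the need to bound $\phi'$ because a possibly growing $\sum_v B_v'$ only helps the limit. The ``main obstacle'' you flag—the regularity of $h(p_f^*)=\mathbb{E}[p_s^*]$—is exactly what the paper's Lemma~\ref{App:p_s_derivative} supplies ($-\infty<\partial p_s^*/\partial p_f<0$), so the technical gap you were worried about is already closed in the paper's machinery.
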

The next result shows that, in the limit of infinitely many identical virtual traders, the DA market admits a unique SFE.
% conventional supplier, and perfect competitions among virtual traders,
% there exists a unique SFE for a given demand curve, as shown by the
% following proposition. 
%The next proposition shows  that under the price alignment condition, there exists a unique SFE for a given demand curve in the DA market.
\begin{restatable}[\textbf{Existence and uniqueness of SFE in the DA market with virtual trading}]{proposition}{DAExistenceTwo}
\label{DA:Existence2}
  Consider a market consisting of renewable and conventional suppliers and the LSE. Then there exists a unique SFE for the renewable suppliers in the DA market.  Moreover, the
form of the equilibrium supply function for renewable suppliers is % the
% same as
identical to that 
in the case without virtual trading. 
\end{restatable}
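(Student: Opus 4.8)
The plan is to reduce the day-ahead problem with infinitely many virtual traders to the one already solved in Proposition~\ref{DA:Existence}, and then re-run its existence and uniqueness argument. The backward-induction structure is unchanged: the RT equilibrium is fixed by Propositions~\ref{RT:Existence}--\ref{RT:Uniqueness}, and I would take as given the limiting price-alignment identity $p_f^*(t)=\mathbb{E}[p_s^*(t)]$ for every $t\in\bigT$ supplied by Proposition~\ref{DA:PriceAlignment_1}. The only new ingredient in the DA market is the aggregate virtual demand $B(\cdot):=\sum_{v\in\bigV}B_v(\cdot)$ appearing on the right-hand side of the clearing condition~\eqref{DA:MarketClearing}, so everything hinges on showing that, in the limit, $B$ enters the renewable supplier's optimality condition only through this price-alignment identity and not through any additional market-power channel.

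Concretely, I would substitute the DA clearing condition~\eqref{DA:MarketClearing} (now including the aggregate virtual demand) into the renewable supplier's payoff~\eqref{DA:RS_Obj}, so that a deviation by supplier $i$ is parametrized by the clearing price $p_f^*$ it induces, with the quantity it delivers equal to $S_i(p_f^*)=D_l+B(p_f^*)-S_{-i}(p_f^*)-C(p_f^*)$ along its residual demand. Differentiating in $p_f^*$ produces a first-order condition identical to the one in the proof of Proposition~\ref{DA:Existence}, except for one extra term, $p_f^*\,B'(p_f^*)$, measuring the local price-sensitivity of the aggregate virtual demand. The crux is to show that this term is negligible in the infinite-trader limit. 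I would establish this by passing to the limit in the virtual traders' own optimality conditions: with $N_V$ identical DEC traders, each trader's best response ties the slope of its bid to the arbitrage wedge $\mathbb{E}[p_s^*]-p_f^*$, which collapses to zero as $N_V\to\infty$, while the symmetric aggregate $B$ converges to a schedule that clears the DA market at precisely $p_f^*=\mathbb{E}[p_s^*]$ but contributes nothing to the supplier's marginal DA revenue. Intuitively, once there is perfect competition among virtual traders the DA price is pinned down by the RT market, so the renewable supplier's marginal incentive to shade its DA bid is governed solely by the DA--RT trade-off, exactly as in the market without virtual trading; the conclusion of this step is that $S$ solves the same ODE, with the same boundary behaviour $S(0)=0$ and $S'(0)=0$, as in Proposition~\ref{DA:Existence}.

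With the governing ODE identified, the rest repeats the arguments behind Propositions~\ref{DA:Existence} and~\ref{RT:Uniqueness}. The ODE has a one-parameter family of candidate solutions indexed by an integration constant; continuity and monotonicity of $S$ require the constant to exceed a certain threshold; and, since a flatter supply function raises the clearing price for any fixed delivered quantity, the supplier's payoff is decreasing in the steepness of its curve, so it selects the smallest admissible constant. This yields a unique equilibrium supply function whose functional form coincides with that in the no-virtual-trading case. Existence of the full DA equilibrium then follows by checking Definition~\ref{Def:DA_Equilibrium}: the renewable suppliers play this $S^*$; the virtual traders play the symmetric limiting profile consistent with $p_f^*=\mathbb{E}[p_s^*]$; and the LSE's cost-minimizing $D_l^*$ is now pinned uniquely by~\eqref{DA:MarketClearing} evaluated at that price. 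Finally, I would rerun the reduction under the relaxations of Assumption~\ref{Ass:RT} treated in Appendix~\ref{sec:robustmodelass}, since the RT equilibrium and the map $p_f^*\mapsto\mathbb{E}[p_s^*]$ enter only through qualitative properties that survive those relaxations.

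The main obstacle is the middle step: making rigorous the claim that the aggregate virtual-demand slope term drops out of the renewable supplier's first-order condition in the infinite-trader limit. The difficulty is that $\mathbb{E}[p_s^*]$ itself depends on the DA quantities through the residual demand, so the renewable supplier is not literally a price taker even after the price is pinned; one has to show that its two price-sensitivity channels—through the DA balance, now neutralized by virtual demand, and through the RT market—combine to reproduce exactly the no-virtual-trading condition, and to control the order of the vanishing terms uniformly in $t\in\bigT$.
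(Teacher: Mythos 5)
Your skeleton matches the paper's proof: fix the RT equilibrium, invoke the price-alignment identity $p_f^* = \mathbb{E}[p_s^*]$ from Proposition~\ref{DA:PriceAlignment_1}, substitute the DA clearing condition (now carrying the aggregate virtual demand) into the renewable supplier's payoff, and show that the resulting FOC collapses to the same ODE as in the proof of Proposition~\ref{DA:Existence}, after which the existence/steepness/uniqueness machinery applies unchanged. You also correctly identify that the whole argument hinges on killing the slope term $\sum_{v} B_v'(p_f^*)$ in the supplier's FOC.

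Where you go wrong is in the route you propose for that step. You claim that ``each trader's best response ties the slope of its bid to the arbitrage wedge $\mathbb{E}[p_s^*]-p_f^*$, which collapses to zero as $N_V\to\infty$.'' That is not what the virtual trader's optimality condition says. Parametrizing by the DA clearing price $p$ and using the clearing identity, trader $v$'s FOC reads
\[
\bigl(\mathbb{E}[(p_s^*)'] - 1\bigr)\, B_v(p_f^*) \;=\; \bigl(\mathbb{E}[p_s^*]-p_f^*\bigr)\bigl(B_{-v}'(p_f^*)-\textstyle\sum_{i} S_i'(p_f^*)-C'(p_f^*)\bigr),
\]
which ties the \emph{level} $B_v(p_f^*)$ (not its slope) to the wedge. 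As the wedge vanishes, each individual level $B_v(p_f^*)\to 0$, but the slopes $B_v'$ appear only on the right multiplied by the vanishing wedge and so are left entirely undetermined by the limit. Once the prices align, the virtual traders' payoff is identically zero in $B_v$, so they are indifferent among bid functions, and nothing in the limit of their FOCs forces the aggregate slope to collapse. Your ``main obstacle'' paragraph is a candid acknowledgement of exactly this, but the fix you sketch does not close it.

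The paper takes a different route at precisely this point. It observes that $-\sum_{v} B_v'(p_f)\geq 0$ (each $B_v$ is decreasing), that a larger $-\sum_v B_v'$ makes the solution of the supplier's ODE steeper, and that a steeper supply function strictly lowers the renewable supplier's payoff by the same reasoning as in Proposition~\ref{RT:Uniqueness}. Because the virtual traders are indifferent once prices are aligned, the equilibrium is then selected at $\sum_v B_v'(p_f^*)=0$, which reduces the ODE exactly to \eqref{App:Prop3_ODE}. That is an equilibrium-selection/optimality argument on the supplier side, not a convergence argument on the trader side; if you want your proof to go through, you should replace the ``wedge kills the slope'' step with this steepness argument (or with some other explicit selection principle among the indeterminate limiting virtual-bid slopes).

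Two smaller points: first, uniqueness of the LSE's $D_l^*$ is not an automatic consequence of the clearing condition at the aligned price, as you suggest in passing—it is the content of Proposition~\ref{DA:Uniqueness_Demand} and requires its own argument; second, the extension through the relaxations of Assumption~\ref{Ass:RT} is not part of the paper's proof of this proposition and is not needed for the stated claim.
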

%\hl{Is the equilibrium in Proposition~9 the same as that in Proposition~3? If so, one should explicitly state it in the proposition. If not, the following statement is not clear.}
Proposition~\ref{DA:Existence2} % establishes the existence and
% uniqueness of the SFE in a DA market that includes virtual trading.
% serving as a
is the 
counterpart to Proposition~\ref{DA:Existence} 
for a market with
virtual trading. 
Interestingly, despite the inclusion of virtual trading, 
the renewable supplier's equilibrium supply function remains identical
in both scenarios % . In other words, virtual trading has no impact on
% the supply function form once the two prices are aligned. 
%\hl{Just by comparing the two results, the reader cannot come to the conclusion you describe.}
This arises from the fact that, when prices are aligned, the influence of virtual trading on the equilibrium supply function cancels out entirely.

As mentioned in \S\ref{ssc:DAAnalysis}, there may be multiple optimal
bidding quantities for the LSE in the DA market without price
alignment. However, the following result establishes that LSE's
optimal demand in the DA market is unique under the price
alignment condition. % the following proposition suggests that the 
%\begin{proposition}
\begin{restatable}{proposition}{DAUniquenessDemand}
\label{DA:Uniqueness_Demand}
Suppose there are both renewable and conventional suppliers and an
infinite number of virtual traders in the electricity market, then the LSE has a unique optimal DA demand bid. \looseness = -1
\end{restatable}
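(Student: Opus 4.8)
The plan is to hold fixed the equilibrium behaviour of all participants other than the LSE and reduce the LSE's residual problem over its demand curve $D_l=\{D_l(t):t\in\mathcal{T}\}$ to a scalar optimization at each delivery time. First I would collect the upstream results: Propositions~\ref{RT:Existence}–\ref{RT:Uniqueness} give a unique RT-market SFE as a function of the DA outcome; Proposition~\ref{DA:Existence2} gives a unique renewable DA supply function, unchanged by the presence of virtual trading; and Proposition~\ref{DA:PriceAlignment_1} gives $p_f^*(t)=\mathbb{E}[p_s^*(t)]$ for every $t\in\mathcal{T}$. The structural fact I would stress is that the RT clearing condition~\eqref{RT:MarketClearing} is a purely physical balance, in which the LSE's RT top-up purchase and the virtual traders' RT cash settlements cancel and never appear: neither $D_l$ nor the virtual positions enter~\eqref{RT:MarketClearing}. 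Hence $p_f^*(t)$ and the random variable $p_s^*(t)$ (a function of the realizations $(D,\mathbb{Q})$ only) are determined independently of $D_l$; the only object that reacts to the LSE's bid is the cleared DEC volume $\sum_{v}B_v(p_f^*)=\sum_{i}S(p_f^*)+C(p_f^*)-D_l$.

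Next I would substitute $p_f^*(t)=\mathbb{E}[p_s^*(t)]$ into~\eqref{DA:LSE_Obj}. Because $D_l(t)$ is deterministic,
\[
\Pi^{\mathrm L}_t(D_l)=p_f^*(t)D_l(t)+\mathbb{E}\big[(D(t)-D_l(t))^+p_s^*(t)\big]=\mathbb{E}\big[p_s^*(t)\max\{D_l(t),D(t)\}\big],
\]
which separates across $t$, so it suffices to minimize the scalar function $g(x)=\mathbb{E}[p_s^*(t)\max\{x,D(t)\}]$, which is convex (an expectation of a convex map). Its right derivative $g'_+(x)=\mathbb{E}[p_s^*(t)\,\mathbf{1}\{D(t)\le x\}]$ is strictly positive for every $x>\underline D(t):=\operatorname{ess\,inf}D(t)$, since by definition of the essential infimum $\mathbb{P}(D(t)\le x)>0$ and $p_s^*(t)>0$ a.s.; hence $g$ is strictly increasing on $[\underline D(t),\infty)$ and every minimizer satisfies $D_l^*(t)\le\underline D(t)$. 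This is precisely the statement that over-reserving is strictly costly, and it already yields the qualitative conclusion that the DA-cleared demand is strictly below true expected demand.

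The remaining, delicate step is to show the minimizer equals $\underline D(t)$ exactly, i.e., to rule out the interval $[0,\underline D(t))$, on which $g$ is flat once prices are perfectly aligned — so uniqueness cannot come from a first-order condition in the limiting market. I would obtain it by a limiting argument from the finite-trader markets: for finite $N_V$ the price gap $\mathbb{E}[p_s^*]-p_f^*>0$ persists and the aggregate equilibrium DEC curve keeps a strictly positive slope, so $p_f^*$ is a strictly increasing function of $D_l$ and the term $p_f^*(D_l)\,D_l$ is strictly convex; the LSE's cost is then strictly convex in $D_l$, with a unique minimizer $D_l^*(N_V)$. Using the monotone/continuous dependence of the equilibrium objects on $N_V$ already developed for Propositions~\ref{DA:PriceAlignment_1} and~\ref{DA:Existence2}, I would show $D_l^*(N_V)$ is monotone and converges, with limit the unique point of the flat region consistent with the limiting best responses, namely $D_l^*(t)=\underline D(t)$; continuity of the demand process then makes $\{\underline D(t):t\in\mathcal{T}\}$ an admissible continuous demand curve and the unique optimal bid. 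I expect this final step — reconciling the flatness of the limiting objective with the asserted uniqueness, whether via the strictly convex finite-$N_V$ problems or by invoking the omitted equilibrium conditions on the virtual traders to exclude the interior of $[0,\underline D(t))$ — to be the main obstacle; the reduction and the strict-monotonicity argument above $\underline D(t)$ are routine.
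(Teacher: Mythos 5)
Your reduction rests on the claim that, in the infinite-trader limit, $p_f^*$ and $p_s^*$ do not respond to $D_l$, with only the cleared DEC volume adjusting. That is not what the model's equilibrium says. When the LSE contemplates a deviation, it holds the other players' submitted bid \emph{functions} fixed, and the DA clearing condition~\eqref{DA:MarketClearing} then forces $p_f^*$ to move: $\partial p_f^*/\partial D_l = 1\big/\big(\sum_i S_i'(p_f^*) + C'(p_f^*) - \sum_v B_v'(p_f^*)\big) > 0$. Crucially, the proof of Proposition~\ref{DA:Existence2} shows that in the limit the aggregate virtual curve is \emph{inelastic}, $\sum_v B_v'(p_f)=0$, so virtual traders do \emph{not} absorb the LSE's price impact; the LSE retains DA market power, and through $p_f^*$ it also moves $D_r$ and hence $p_s^*$ (with $\partial p_s^*/\partial p_f^*<0$ by Lemma~\ref{App:p_s_derivative}). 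Once this is restored, your rewriting $\Pi^{\mathrm L}_t=\mathbb{E}[p_s^*\max\{D_l,D\}]$ is not a function of $D_l$ at fixed prices, and the flat region on $[0,\underline D(t))$ that worried you never materializes: the LSE's cost carries the extra strategic terms $D_l\cdot\partial p_f^*/\partial D_l$ (it pays more on inframarginal DA units as it pushes up $p_f^*$) and $\mathbb{E}[(D-D_l)^+(p_s^*)']\cdot\partial p_f^*/\partial D_l$ (it pays less in RT as $p_s^*$ falls), and these opposing forces give a strict first-order condition even when $p_f^*=\mathbb{E}[p_s^*]$. That FOC is exactly the one the paper records in the proof of Proposition~\ref{DA:Existence2}, $D_l^* = \mathbb{E}[D(p_s^*)']/(\mathbb{E}[(p_s^*)']-1)$, which is uniquely determined and is generally \emph{not} $\underline D(t)$ — so the endpoint your limiting argument aims for is also wrong.

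For the comparison: the paper's proof of this proposition is a short perturbation/contradiction argument (take $D_l>D_l^*$ both optimal, let the DEC bidders pick up the slack so that $p_f^*$, $p_s^*$, and all supply functions are unchanged, and argue the LSE strictly saves by switching to $D_l^*$), not a convexity-plus-limit argument. Your instinct that there is a delicate issue here is sound — the paper's statement "expected RT payment remains the same" is asserted rather than derived and, as you implicitly notice, the LSE's own RT shortfall $(D-D_l)^+$ does move with $D_l$ — but the correct mechanism that closes the gap is the residual DA market power of the LSE against an inelastic virtual block, not price invariance, and not a limit of finite-$N_V$ minimizers converging to $\underline D(t)$.
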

\section{Empirical Evidence on the Impact of Virtual Trading}
\label{sc:NumStudy}
%In this section, we provide empirical support for our findings using real data from wholesale
%electricity markets in the U.S. 
We provide empirical support for the two main
model implications: (i) virtual trading narrows the price gap between the
DA and RT markets, and (ii) virtual trading  leads to a reduction in the
LSE's demand in the DA market. 

Using data from the California ISO (CAISO) and New York ISO (NYISO)—both of which offer observations from periods before and after the introduction of virtual trading—we are able to directly assess its market impact.\footnote{Other wholesale markets, such as the Electric Reliability
  Council of Texas (ERCOT), ISO New England (ISO-NE), Midcontinent
  Independent System Operator (MISO), and Pennsylvania-New Jersey-Maryland
  Interconnection (PJM), had virtual trading integrated since their
  opening (see, e.g., \citealt{parsons2015financial}), and thus do not
  support this comparative analysis.} 

We begin by examining the impact of virtual trading on  the difference
between the RT and DA prices in the California and New York electricity
markets. For CAISO, which implemented virtual bidding on February 1, 2011, we use daily data from April 2009 to March 2013, comparing the average hourly price gap across two periods: April 2009–January 2011 (pre–virtual trading) and February 2011–March 2013 (post–virtual trading). For NYISO, we analyze data from September 2001 (the earliest available on the NYISO website) through January 2002, capturing the period immediately before and after virtual trading was introduced on November 1, 2001. 
%\begin{figure}[h!]
%\centering
%\subfigure[]{\includegraphics[width=0.45\textwidth]{Figures/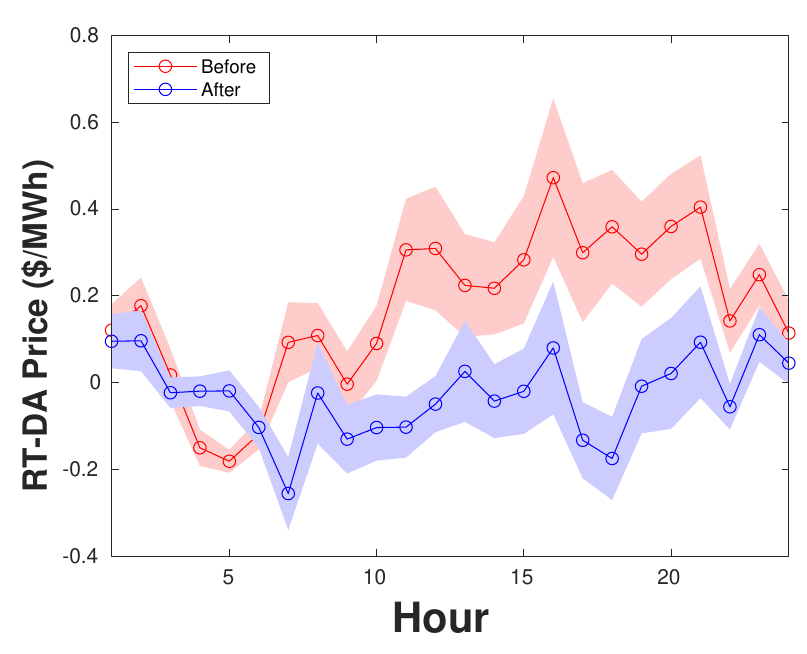}}
%\hfill
%\subfigure[]{\includegraphics[width=0.45\textwidth]{Figures/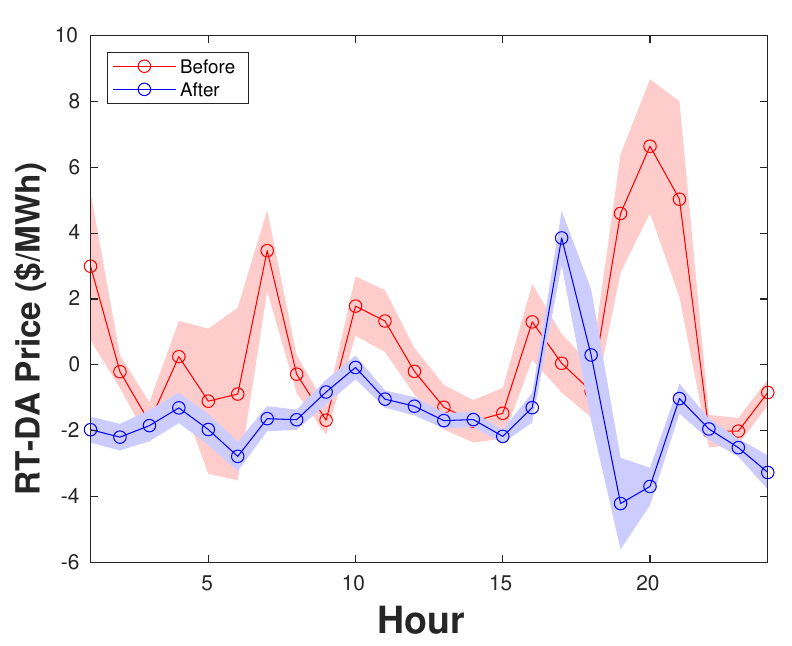}}
%\caption{Average hourly price gaps between the DA and RT prices before and after the initiation of virtual bidding.}
%\label{fig:AverageSpreadHourly}
%\end{figure}
\begin{figure}[h!]


  \centering
  \begin{subfigure}[b]{0.44\textwidth}
    \includegraphics[width=\textwidth]{PriceSpread_CAISO.pdf}
    \caption{Price spreads for CAISO}
  \end{subfigure}
  \hfill
  \begin{subfigure}[b]{0.44\textwidth}
    \includegraphics[width=\textwidth]{PriceSpread_NYISO.pdf}
    \caption{Price spreads for NYISO}
  \end{subfigure}
\caption{Average hourly price spreads, computed as the difference between RT and DA prices, before and after the introduction of virtual bidding.}
\label{fig:AverageSpreadHourly}
%\captionsetup[]{subrefformat=}
\end{figure}

Figure~\ref{fig:AverageSpreadHourly} plots average price gaps for each
hour of the day in both the CAISO and NYISO markets. While NYISO and CAISO
calculate the DA 
price on an hourly basis, they clear the RT market every five minutes. We
set the RT hourly price to the average of the twelve RT prices recorded
within that hour. 
% to compute the mean value. 
In CAISO, before the introduction of virtual
trading, the average price gaps are positive throughout the day, except at
4 am and 5 am, indicating that the DA market clearing price is lower than
the RT price. This observation is largely consistent with our
model prediction of LSE's underbidding behavior in the DA
market. Implementing virtual trading reduces price gaps in most hours of
the day. In NYISO, DA prices were lower than RT prices prior to the introduction of virtual trading, but became higher afterward.\footnote{
A plausible explanation is that virtual bidders take virtual load positions to profit from extreme right-tail events in RT markets, such as price spikes caused by transmission congestion or reserve scarcity. By purchasing electricity in the DA market, they increase DA demand and prices; by closing these positions in the RT market, they increase RT supply and lower RT prices. These trading actions, on average, produce a negative price spread.} However, the average absolute price spread between the two markets decreased following the implementation of virtual bidding. 
%One possible explanation for this phenomenon is that virtual bidders prefer targeting price surges, which are unbounded, over price declines, which is bounded from below, in the RT market.

Next, we compute the ratio of demand cleared in the DA market to that cleared in the RT market. We use this ratio % eflects the
as a surrogate for the 
extent to which the LSE
underbids. We compute average demand ratios for each hour in both
markets. % Similar to the
As in the case for the 
market clearing price, we set the hourly demand in the % twelve
RT market to the average demand over 12 
values recorded within each hour.\looseness = -1 % and compute the mean value to represent
% the hourly demand. 
%\begin{figure}[h!]
%\centering
%\includegraphics[scale = 0.1]{Figures/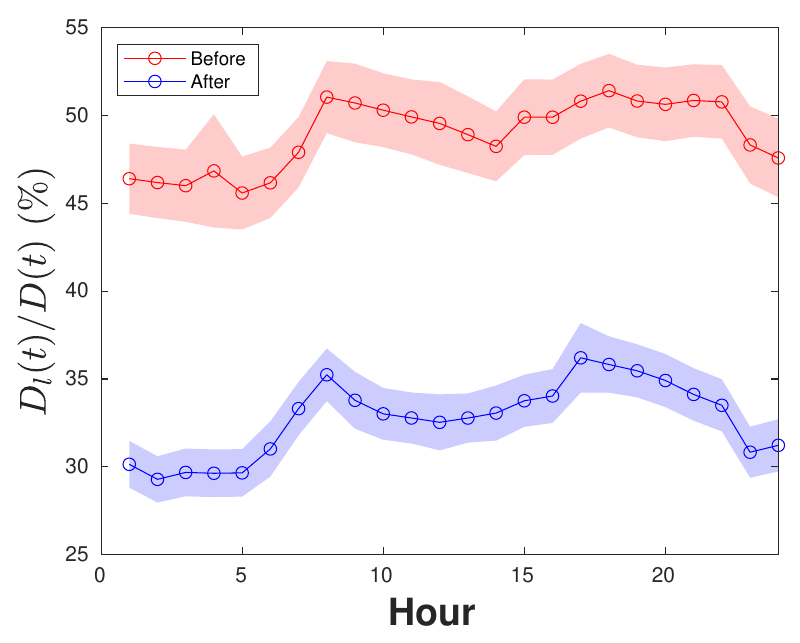}
%\includegraphics[scale = 0.1]{Figures/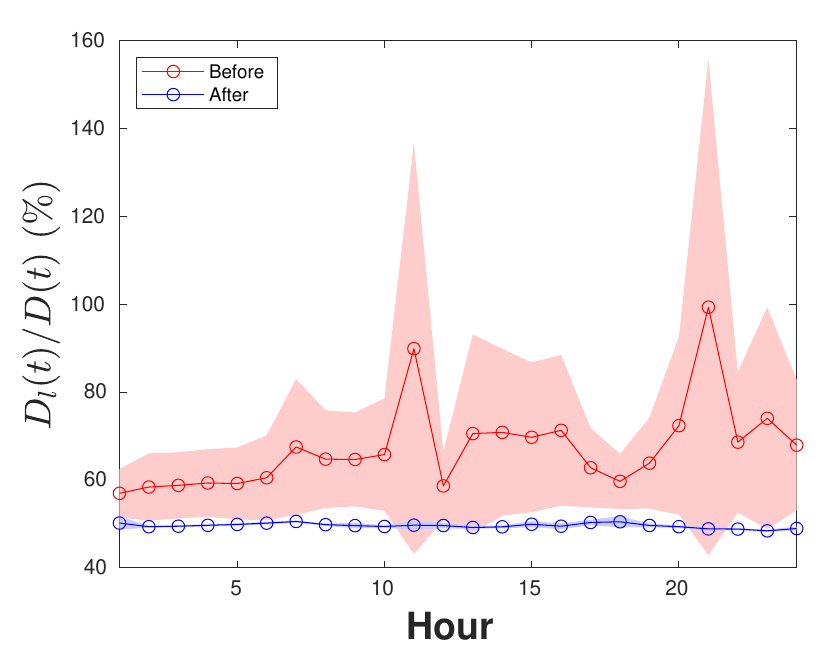}
%\caption{Average hourly ratios of DA demand to RT demand before and after the initiation of virtual bidding. Left: CAISO %data. Right: NYISO data.}
%\label{fig:AverageDemandHourly}
%\end{figure}
\begin{figure}[h!]


  \centering
  \begin{subfigure}[b]{0.43\textwidth}
    \includegraphics[width=\textwidth]{DemandRatio_NYISO.pdf}
    \caption{Demand Ratios for CAISO}
  \end{subfigure}
  \hfill
  \begin{subfigure}[b]{0.45\textwidth}
    \includegraphics[width=\textwidth]{DemandRatio_CAISO.pdf}
    \caption{Demand Ratios for NYISO}
  \end{subfigure}
\caption{Average hourly ratios of DA demand to RT demand before and after the implementation of virtual bidding.}
\label{fig:AverageDemandHourly}
%\captionsetup[]{subrefformat=}
\end{figure}

Figure \ref{fig:AverageDemandHourly} reports the results for the demand
ratio. In CAISO, demand ratios before implementing virtual trading vary
between 45\% and 53\%, which suggests that nearly half of the demand would
be cleared in the DA market. However, this ratio declines by 10\% to 15\%
after the implementation of virtual trading. In NYISO, the demand ratios range from 59\% to 95\% before the introduction of virtual trading and from 53\% to 55\% afterward. This marked decline confirms our model’s prediction that virtual bidding significantly reduces demand ratios.\looseness = -1 

%Finally, we provide empirical evidence for the statement that the increasing penetration of renewable power decreases the LSE's DA demand in the presence of virtual trading. We compute the correlation coefficient between the ratio of the DA demand to the RT demand and the actual renewable supply using data from CAISO. The data set covers the period from January, 2016 to December, 2021, during which the market share of renewable power increase from $20\%$ to $34\%$.
%\begin{table}[h!]
%\centering
%\caption{Correlation coefficient between the demand ratio and the actual renewable supply}
%\label{tab:Reg_Results}
%\begin{threeparttable}
%\begin{tabular}{ccc}
%\hline
%& Estimate  & P Value  \\ \hline
%Pearson's Corr.   & -0.149 & 0.0074\tnote{*} \\
%Spearman's Corr.  & -0.174 & 0.0018\tnote{*} \\
%Kendall's Corr.  & -0.128 & 0.0006\tnote{*} \\ \hline
%Intercept   & -1.39e-04 & 5.37e-05   & 2.11e-03 \\ \hline
%\end{tabular}
%\begin{tablenotes}
%\item {\small*Statistical significance ($p<0.05$)}
%\end{tablenotes}
%\end{threeparttable}
%\end{table}
%Table \ref{tab:Reg_Results} reports the correlation coefficients calculated using three typical approaches and their corresponding $p$-values. The last two approaches measure rank based correlations that do not assume a linear relationship between the demand ratio and the renewable supply. These results suggest that there is a (weak) negative correlation between the volume of renewable supply and the demand ratio. That is, increasing renewable supply leads to a decrease in the LSE's DA demand, which is consistent with our theoretical result. 

\section{Conclusion}
\label{sc:Conclusion}
In this study, we investigate the impact of virtual trading % within
in two-settlement 
wholesale electricity markets. %  which are comprised of DA and RT
% markets.
While this % dual-market
two-settlement 
structure provides hedging opportunities
for market participants, it also introduces potential
inefficiencies. We have built a game-theoretical framework which accounts for strategic interactions between renewable suppliers, LSEs, and 
virtual traders. We have analyzed how virtual trades, purely
financial electricity transactions without any physical deliveries, can mitigate price inefficiencies by enabling financial institutions to arbitrage between DA and RT markets. 

The supply function equilibrium of our framework reveals that, in the
absence of virtual trading, LSEs % might
can
exploit their market power to underbid in the DA market, leading to lower DA prices compared to those
expected in the RT market. Virtual trading  narrows, and
potentially eliminate, the price spread between the two markets as the
presence of virtual traders grows. Nevertheless, this convergence in
prices does not necessarily imply a convergence in traded
quantities. LSEs may continue to bid below
their true demand estimates. Additionally, the inclusion of renewable
energy suppliers does not offset the strategic bidding behavior of LSEs.  
%but rather increases their bidding volume in the DA market. 
We have provided empirical support for our model predictions, using data from
 CAISO and NYISO.\looseness = -1 

\section*{Acknowledgments}{
This research was developed as a part of the grant “Risk-Aware Power System Control, Market and Market
Incentives”. The authors gratefully acknowledge the support of the Department of Energy through an ARPA-E
PERFORM program.}

\newpage
% Reference
\bibliography{reference}
\bibliographystyle{informs2014}

% \begin{btSect}[informs2014]{reference}
% \section*{References}
%  \btPrintCited
% \end{btSect}

\newpage
% Online Appendix
\appendix
\setcounter{page}{1}
\renewcommand{\thepage}{OA-\arabic{page}}
\newpage

\begin{center}
{\LARGE Online Appendix}
\end{center}
\section{Robustness with respect to Assumption
  \ref{Ass:RT}}\label{sec:robustmodelass} 
In the following discussion, we relax Assumption \ref{Ass:RT} to extend
our analyses to more general scenarios in Case 2. We first relax Assumption
\ref{Ass:RT}-(i) by % considering a
allowing for a
residual demand range
$[\underline{D}_r,\overline{D}_r]$ where $\overline{D}_r>\underline{D}_r >
0$. In such case, the initial market clearing price $\gamma_0$,
at which $\sum_{i\in\mathcal{S}}\bar{S}^*_i(\gamma_0)+\bar{C}(\gamma_0)
-C(p_f^*) = \underline{D}_r$, may not % start from
be
zero. %  as the FOCs and the
% associated 
% ODEs only define supply functions over $[\underline{D}_r, \overline{D}_r]$.  
%In fact, the initial price $p_s^0$ can vary between zero and a upper bound, at which the conventional supplier alone meets $\underline{D}_r$.
%For instance, Proposition \ref{RT:Existence} and \ref{RT:Uniqueness} determine $\bar{c}_1$ and $p_1$ for $k=1$ by considering $N_S-1$ competitors in the payoff function of the $i$-th renewable supplier. This implicitly assumes that all renewable suppliers have sufficient capacities for the minimal residual demand. The assumption may not hold when $\underline{D}_r$ is positive as renewable suppliers may hit their capacity constraints when generating the electricity marketed to them at $\underline{D}_r$. 
%Mathematically, this means that the number of competitors in the payoff function is less than $N_S-1$.
%\hl{What does it mean to have no binding capacity constraints at $\underline{D}_r$? The capacity constraints are imposed on the supply function and not on demand. Do you mean that that at an SFE $\{\bar{S}^*_i\}_{i=1}^{N_S}$ such that $\bar{S}^*_i(\gamma_0) < Q^r_i$ where $\gamma_0$ is the market clearing price corresponding to the minimum demand $\underline{D}_r$? If this is the case, doesn't this impose a constraint on the SFEs for which this result it true?} 
To deal with this issue, we first establish the following result.
\looseness
= -1 
\begin{restatable}[\textbf{Relaxation of Assumption 1-(i)}]{lemma}{RTInitialPrice}
%\begin{lemma}[\textbf{Relaxation of Assumption 1-(i)}]
\label{RT:InitialPrice_Exd}
Suppose % that
the minimal residual demand % level
$\underline{D}_r>0$.
Let $\{\bar{S}^\ast_i: i \in \bigS\}$ denote any SFE for this market, and
let $\gamma_0$ denote the market clearing price at the lowest residual
demand, i.e. $\sum_{i\in\mathcal{S}}\bar{S}^*_i(\gamma_0)+\bar{C}(\gamma_0) -C(p_f^*) =
\underline{D}_r$.
% and
% that no renewable supplier reaches its residual capacity at
% $\underline{D}_r$.
Then the following conclusions hold.
\begin{enumerate}[(i)]
% \item The supply at $\gamma_0$ is strictly below capacity for all suppliers,
%   i.e. $\bar{S}^\ast_i(\gamma_0) < Q^r_i$ for all suppliers~$i$.  
\item All the renewable suppliers submit symmetric equilibrium supply
functions until their residual capacities are reached, i.e. % Specifically, 
the 
equilibrium supply 
function for the $i$-th renewable supplier is of the form
$\bar{S}_i^*(p)=\min\{\bar{S}(p),Q_{i}^r\}$, for $p \geq \gamma_0$, 
where $\bar{S}: [\gamma_0,\infty)\rightarrow\mathbb{R}^+$ is a continuous and
increasing function.
\item There is a unique SFE that maximizes the suppliers revenue.
\end{enumerate}
\end{restatable}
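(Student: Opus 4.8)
\textbf{Proof proposal for Lemma~\ref{RT:InitialPrice_Exd}.}

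The plan is to mirror the argument used for the case $\underline{D}_r = 0$ (Lemma~\ref{RT:Symmetry} and Propositions~\ref{RT:Existence}--\ref{RT:Uniqueness}), but starting the ODE analysis at the strictly positive price $\gamma_0$ rather than at $0$. First I would fix an arbitrary SFE $\{\bar{S}^*_i : i\in\bigS\}$ and observe that, since $\underline{D}_r > 0$, the market-clearing condition~\eqref{RT:MarketClearing} forces $\sum_{i\in\bigS}\bar{S}_i^*(\gamma_0) + \bar{C}(\gamma_0) - C(p_f^*) = \underline{D}_r > 0$ at the lowest residual demand, so at least one strategic supplier produces a positive incremental amount at $\gamma_0$. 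For part~(i), the key step is the pointwise revenue-maximization condition~(ii) of Definition~\ref{Def:RT_SFE}: for any realized demand curve $D$ with $D_r(t)$ taking the value corresponding to clearing price $p \ge \gamma_0$, the payoff~\eqref{RT:RS_Obj} of supplier $i$ is $\bar{S}_i^*(p)\,p$, and differentiating the best-response problem (after substituting the market-clearing condition to express $p$ as a function of $\bar{S}_i$, holding $\bar{\mathbb{S}}_{-i}$ fixed) yields the same first-order ODE~\eqref{RT:ODE} on every interval where two or more suppliers are simultaneously below their residual caps. Since all suppliers below their caps solve the \emph{same} ODE and the solution of~\eqref{RT:ODE} on an interval is pinned down once its value at one point is known (continuity of $\bar{S}$), any two such suppliers must coincide there; hence $\bar{S}_i^*(p) = \min\{\bar{S}(p), Q_i^r\}$ with a common continuous increasing $\bar{S}$ defined on $[\gamma_0,\infty)$. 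The ordering $0 \le \gamma_0 < \gamma_1(\bar c_1) < \cdots$ then follows exactly as in the paragraph preceding Lemma~\ref{RT:Symmetry}, using the strict ordering of the residual capacities $Q_1^r < \cdots < Q_{N_S}^r$.

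For part~(ii), I would reuse the structure of Propositions~\ref{RT:Existence} and~\ref{RT:Uniqueness} verbatim, only with the initial segment~\eqref{RT:Sol_0} replaced by the general-$k$ form~\eqref{RT:Sol_1} anchored at $\gamma_0$: on $(\gamma_0,\gamma_1(\bar c_1)]$ the function is $\bar{S}(p;\bar c_1) = p^{1/(N_S-1)}\big(\bar c_1 - \tfrac{1}{N_S-1}F_1(p)\big)$, now with the boundary/initial condition coming from the market-clearing value $\bar{S}(\gamma_0) = \big(\underline{D}_r + C(p_f^*) - \bar C(\gamma_0) - \sum_{i}Q_i^r\mathbf{1}[\cdot]\big)/(\text{number of uncapped suppliers})$ instead of $\bar{S}(0^+)=0$. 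The stitching argument (each $\bar c_k$, $k\ge 2$, forced by continuity to be an increasing function of $\bar c_1$; monotonicity achieved by taking $\bar c_1$ large enough that every constant exceeds its threshold; the feasible set of $\bar c_1$ being a half-line) goes through unchanged, and the revenue-comparison logic of Proposition~\ref{RT:Uniqueness}—flatter supply curves raise the clearing price for the same quantity, so suppliers prefer the smallest admissible $\bar c_1$—selects a unique SFE among the candidates. I would also note the dichotomy flagged in the main text: if, at $\gamma_0$, the market-clearing quantity per supplier already meets or exceeds some $Q_k^r$, the corresponding supplier is capped from the outset and the symmetric curve $\bar{S}$ simply starts at a higher index; the two ``bidding strategies'' described after Proposition~\ref{RT:Uniqueness} are exactly these two sub-cases.

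The main obstacle is handling the initial boundary condition at $\gamma_0 > 0$: unlike the $\underline{D}_r = 0$ case, we lose the clean anchoring $\bar{S}(0^+) = \bar{S}'(0^+) = 0$ that made the first ODE solution~\eqref{RT:Sol_0} unambiguous, and instead must argue that $\gamma_0$ and the value $\bar{S}(\gamma_0)$ are themselves determined consistently by the market-clearing identity together with which suppliers are capped at that price. Concretely, I would need to show that the map sending $\bar c_1$ to (the implied $\gamma_0$, the implied per-supplier quantity, the stitched-together $\bar S$) is well-defined and monotone, so that a smallest valid $\bar c_1$ exists; this requires a short fixed-point/monotonicity argument rather than the direct construction available when the curve emanates from the origin. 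A secondary technical point is verifying that at $\gamma_0$ the non-strategic suppliers (those in $\bigS\setminus\bigS_r$, with zero residual capacity) contribute only their DA commitments and possibly a shortfall purchase, which enters~\eqref{RT:MarketClearing} additively and does not disturb the ODE satisfied by the strategic suppliers—this is exactly the bookkeeping already set up in~\S\ref{sc:Model}, so it should be a remark rather than a new computation.
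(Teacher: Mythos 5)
Your high-level plan---mirror the $\underline{D}_r=0$ argument starting the ODE at $\gamma_0$ instead of $0$---matches the paper's structure, but your part~(i) argument has a genuine gap. You claim that because all uncapped suppliers solve the same first-order ODE~\eqref{RT:ODE} on each interval, and ``the solution $\ldots$ is pinned down once its value at one point is known,'' they must coincide. This is circular: the ODE has a one-parameter family of solutions, so two suppliers could solve it with different integration constants, and $\bar S$ is the very object whose existence you are trying to establish. In the $\underline{D}_r=0$ case the constant is forced by evaluating $p\,\bar S_i^*(p)=\int_0^p[\,\cdot\,]\,dx+c_i$ at $p=0$, which gives $c_i=0$ for every~$i$; when $\gamma_0>0$ that evaluation gives $c_i=\gamma_0\,\bar S_i^*(\gamma_0)$, and nothing in the ODE forces these to agree across~$i$. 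Your boundary-value formula divides the renewable quantity evenly among the uncapped suppliers, which simply assumes the symmetry you are trying to prove. The paper's missing ingredient is precisely here: since the additive constant does not enter the FOC, the values $\{\bar S_i^*(\gamma_0):i\in\bigU\}$ must themselves constitute a Nash equilibrium of a static Cournot game at price~$\gamma_0$, i.e.\ $\bar S_i^*(\gamma_0)\in\argmax_q \, q\,\bar C^{-1}\bigl(\underline{D}_r-\sum_{j\notin\bigU}Q_j^r-\sum_{j\in\bigU,\,j\neq i}\bar S_j^*(\gamma_0)-q\bigr)$, whose first-order condition has the same form for every $i\in\bigU$ and therefore pins down a common value. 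Only with this symmetric anchor can the ODE propagation you describe take over.

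For part~(ii), you propose to rerun the $\bar c_1$-stitching construction anchored at $\gamma_0$ and concede you would need a ``short fixed-point/monotonicity argument'' to show the map $\bar c_1\mapsto(\gamma_0,\bar S(\gamma_0),\bar S)$ is well-defined and monotone. That gap is real, and the paper sidesteps it entirely: since flatter supply curves maximize revenue (the logic of Proposition~\ref{RT:Uniqueness}), the revenue-maximizing SFE pushes $\gamma_0$ as high as possible, forcing $\bar S(\gamma_0)=0$ with the conventional supplier absorbing all of $\underline{D}_r$; the unique SFE is then the price shift $\bar S_i^*(p)=\bar S_0^*(\max\{p-\gamma_0,0\})$ of the $\underline{D}_r=0$ solution from Proposition~\ref{RT:Uniqueness}, and no new construction is required. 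Your formula for $\bar S(\gamma_0)$ does evaluate to zero at this $\gamma_0$, but you neither identify nor exploit this, so your part~(ii) argument remains incomplete as written.
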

Lemma~\ref{RT:InitialPrice_Exd} % extends the symmetry result in
% Lemma~\ref{RT:Symmetry} by allowing the supply function to start from (potentially)
% a strictly positive 
% market clearing price~$\gamma_0$. 
% Lemma~\ref{RT:Extened_Uniqueness} 
extends the existence and uniqueness
results in Proposition \ref{RT:Existence} and \ref{RT:Uniqueness} by
relaxing Assumption 1-(i). Next, we relax Assumption 1-(ii) and consider a
scenario where the highest residual capacity $Q_{N_S}^r$ is never
reached in the RT market. Let $M_S \leq N_S$ denote the index such that the 
% Suppose
the residual capacity $Q_{M_S-1}^r$ is reached % at some $t \in \bigT$,
% but
reached at $\overline{D}_r$ but 
% still attainable, but 
$Q_{M_S}$ is not, i.e. $\sum_{i=1}^{M_S-1} Q_i^r \leq \overline{D}_r <
\sum_{i=1}^{M_S} Q_i^r $. % , when the maximum residual demand 
% $\overline{D}_r$ is met.
In this case, the symmetry established in Lemma
\ref{RT:Symmetry} and \ref{RT:InitialPrice_Exd} still holds because we
used forward induction in establishing the results; however, the 
% in those analyses. That is, 
the forward induction
% can be
is
terminated at $M_S-1$ instead of $N_S-1$.  % since $M_S<N_S$.  
Similarly, the solution \eqref{RT:Sol_0} of the ODE \eqref{RT:ODE} can be
derived % on
for price interval $[\gamma_{i-1}(\bar{c}_{i-1}),
\gamma_i(\bar{c}_i)]$ for $i\in\{1,...,M_S-1\}$. Then we can ensure the
existence and uniqueness of $\{\sigma_i(p;\bar{c}_i)\}_{i=1}^{M_S-1}$ by
following the same methods in Proposition \ref{RT:Existence} and
\ref{RT:Uniqueness}. \looseness = -1 
  
Finally, we relax Assumption 1-(iii) to address scenarios where renewable
suppliers may not meet their DA commitments due to the capacity
constraints. Let $N_S^r  = \max\{i: Q^r_i = 0\} (\geq 1)$. Then 
% Suppose there are $N_S^r(\geq 1)$ such renewable
% suppliers. According to the definition of the residual capacity, these
% suppliers have zero available capacities in the RT market and, therefore,
% are not strategic. These 
suppliers $i \leq N_S^r$ % would
are not strategic -- they fulfill their DA commitments
whenever feasible to avoid underproduction penalties; otherwise, they
would produce up to the residual capacity. The set of 
% Next, we focus on 
strategic renewable suppliers % in the RT market,
% represented by the set 
$\mathcal{S}_r=\mathcal{S}\setminus\{1,...,N_S^r\}$. 
The only difference % in this scenario
here
compared to the analysis in
\S\ref{ssc:RTAnalysis} is that the number of strategic renewable
suppliers % Here, the number of strategic renewable suppliers
is
$N_S-N_S^r$ instead of $N_S$. By adjusting the initial index
from $k=1$ to $k=N_S^r+1$, we can apply the method used to derive 
\eqref{RT:Sol_0}-\eqref{RT:Sol_1} for suppliers indexed from $(N_S^r+1)$
to $N_S$.   
To determine the constants in the obtained supply functions
and ensure the uniqueness of the SFE, we replicate the SFE analysis
outlined in Proposition \ref{RT:Existence} and \ref{RT:Uniqueness}.  
 
\section{Solution of the ODE \eqref{RT:ODE}}
\label{sc:AppSolODE}
We first replace $p_s^*(t)$ in \eqref{RT:ODE} with a generic variable
$p\in\mathbb{R}^+$ to ease our notation, as there is no need to emphasize
the market clearing price $p_s^*(t)$ for time $t\in\mathcal{T}$ when
deriving the ODE solutions.  

Suppose we denote as $\sigma_1$ the solution to the ODE \eqref{RT:ODE}
when $k=1$, then \eqref{RT:ODE} becomes 
\begin{equation*}
\big(p(N_S-1)\big)\sigma'_1(p)-\sigma_1(p)=-p\bar{C}(p).
\end{equation*}
At $p=0$, the solution of the above ODE is $\sigma_1(p)=0$.
Define $\gamma_1:= \min\{p: \sigma_1(p)=Q_1^r\}$. For $p\in (0,\gamma_1]$,
dividing both sides by $p(N_S-1)$ we get 
\begin{equation*}
\sigma'_1(p)-\frac{1}{p(N_S-1)}\sigma_1(p)=\frac{1}{p(N_S-1)}\bigg(-p\bar{C}(p)\bigg).
\end{equation*}
This is a first order (nonhomogeneous) linear differential equation. Its solution is given by (see, e.g., page 47, \citealt{adkins2012ordinary}):
\begin{equation*}
\sigma_1(p)=
p^{\frac{1}{N_S-1}}\bigg(\frac{b_1}{N_S}-\frac{1}{N_S-1}\int_{a_1}^{p}\bar{C}'(x)x^{-1/(N_S-1)}dx\bigg),  
\end{equation*}
where $p\in (0,\gamma_1]$ and $b_1\in\mathbb{R}$ and $a_1\in(0,\gamma_1]$
are two constants. The integral exists due to the continuity of the
integrand. Define $F_1(x)$ as the anti-derivative of $\int
\bar{C}'(x)x^{-1/(N_S-1)} dx$, and $\bar{c}_1:=b_1/N_S+F_1(a_1)/(N_S-1)$. % Then the integral
% $\int_{a_1}^{p}\bar{C}'(x)x^{-1/(N_S-1)}dx$ can be rewritten as
% $F_1(p)-F_1(a_1)$. Suppose we define a new constant
%. The above solution becomes:
Then
\begin{align*}
\sigma_1(p;\bar{c}_1)=\begin{cases}
0, & \text{if}\ p=0; \\
p^{\frac{1}{N_S-1}}\Big(\bar{c}_1-\frac{1}{N_S-1}F_1(p)\Big), & \text{if}\
p\in(0,\gamma_1(\bar{c}_1)], 
\end{cases}
\end{align*}
where
%we specify $\sigma_1(p)$ and $\gamma_1$ as $\sigma_1(p;\bar{c}_1)$ and 
$\gamma_1(\bar{c}_1):=\min\{p:\sigma_1(p;\bar{c}_1)=Q_1^r\}$,
and emphasize the dependence of $\sigma_1$ and $\gamma_1$on the constant $\bar{c}_1$. Using
L'Hopital's rule, we can check that the solution is right continuous at
$p=0$, i.e., 
\begin{equation}
\label{RT:RContinuous}
\lim_{p\rightarrow 0^+}p^{\frac{1}{N_S-1}}\Big(\bar{c}_1-\frac{1}{N_S-1}F_1(p)\Big) =0.
\end{equation}
The solutions for $k=2,\ldots,N_S-1$ can be derived in a similar way. % Let
% $\gamma_k:=\argmin\{p:\sigma_k(p)=Q_k^r\}$ for $1<k<N_S$.
The ODE~\eqref{RT:ODE} can be written as
\begin{equation*}
\sigma'_k(p)-\frac{1}{p(N_S-k)}\sigma_{k}(p)=\frac{1}{p(N_S-k)}\bigg(-p\bar{C}(p)\bigg).
\end{equation*}
The solution to the ODE is 
% \begin{equation*}
% \sigma_k(p)=
% p^{\frac{1}{N_S-k}}\Big(\frac{b_k}{N_S}-\frac{1}{N_S-1}\int_{a_k}^{p}\bar{C}'(x)x^{-1/(N_S-k)}dx\Big),
% \end{equation*}
% where $b_k\in\mathbb{R}$ and $a_k\in(\gamma_{k-1},\gamma_{k}]$ are two
% constants. The integral exists due to the continuity of the integrand. Let
% $F_k$ be an anti-derivative of $\bar{C}'(x)x^{-1/(N_S-k)}$. The integral
% in the solution can be rewritten as $F_{k}(p)-F_k(a_k)$. Define
% $\bar{c}_k:=b_k/N_S+F_k(a_k)/(N_S-k)$ and
% $\gamma_k(\bar{c}_k):=\min\{p:\sigma_k(p;\bar{c}_k)=Q_k^r\}$. The solution
% becomes 
\begin{equation*}
\sigma_k(p;\bar{c}_k)= p^{\frac{1}{N_S-k}}\Big(\bar{c}_k-\frac{1}{N_S-k}F_k(p)\Big),
\end{equation*}
where $F_k$ is the anti-derivative of $\bar{C}'(x)x^{-1/(N_S-k)}$, 
where $p\in (\gamma_{k-1}(\bar{c}_{k-1}), \gamma_k(\bar{c}_k)]$, and
$\gamma_k(c_k):=\argmin\{p:\sigma_k(p;c_k)=Q_k^r\}$ for $1<k<N_S$.
We obtain \eqref{RT:Sol_0}--\eqref{RT:Sol_1} by combining $\sigma_1$ and
$\{\sigma_k\}_{k=2}^{N_S-1}$ as $\bar{S}$. 
%\section{Deciding the Initial Price}
%When the smallest residual demand is zero, the initial price $p_s^0$
%corresponding to that residual demand starts from zero. So the renewable
%supplier's supply function starts from the origin $(\gamma_0,
%\underline{D})=(0,0)$, as illustrated in \S\ref{ssc:RTAnalysis}. This
%assumption simplifies the analysis and is commonly used in the
%literature. However, in practice, this is not always true, in particular
%when the actual demand is higher than expected. In such case, the
%residual demand starts from a strictly positive value $\underline{D}$ and
%there is no restriction on the supply function that covers the demand
%range $[0,\underline{D}]$. Technically, if without considering capacity
%constraints, any continuous and increasing function is a valid supply
%function on that demand range, resulting in various initial prices
%$p_s^0$ that corresponds to the demand $\underline{D}$. Thus, in this
%material, we discuss how to select $p_s^0$. 
%\begin{equation}
%\bigg(D(t)-\bar{C}(p_{s,t}(p_s^0))\bigg)p_{s,t}'(\gamma_0)+p_{s,t}(p_s^0)\bigg(-\bar{C}'(p_{s,t}(p_s^0))p'_{s,t}
%(p_s^0)\bigg).
%\end{equation}
%Meanwhile, 
\section{Proofs}
\label{sc:AppProof}
\RTSymmetry*
\begin{proof}
% \paragraph{Proof of Lemma \ref{RT:Symmetry}}
Let $\{\bar{S}^\ast_i(p): i = 1, \ldots, N_S\}$ denote an SFE for the RT
market. We establish the result by iterating over the number of renewable
suppliers at their capacity limits.%  In the following discussion,  we
% replace $p_s^*(t)$ that satisfies $Q_{k-1}^r\leq \bar{S}_i^*(p_s^*(t)) <
% Q_k^r$ for $t\in\mathcal{T}$ and all $i\in\mathcal{S}$ with
% $p\in\mathbb{R}^+$ to ease our notation.\looseness =-1  
\begin{enumerate}[(i)]
\item Let $\gamma_1:=\min\{p: \exists i\in\mathcal{S}, \text{ s.t. }
  \bar{S}_i^*(p)=Q_1^r\}$. Then $\bar{S}_i^*(p)<Q_1^r$ for
  $p\in[0,\gamma_1)$ and $i\in\mathcal{S}$, i.e. no renewable supplier is
  at its residual capacity for $p \in [0,\gamma_1)$. Using the market
  clearing condition~\eqref{RT:MarketClearing}, we can rewrite the
  payoff~\eqref{RT:RS_Obj} of the $i$-th renewable supplier as  
  \begin{equation*}
    \big(D_r-\bar{C}(p) + C(p_f^*) -\bar{S}_{-i}^*(p)\big)p,
  \end{equation*}
  where
  $\bar{S}_{-i}^*(p):=\sum_{j\in\mathcal{S}\setminus\{i\}}\bar{S}^*_{j}(p)$.
  Note that $C(p_f^*)$ is a known constant as it is the conventional
  supplier's DA commitment. Given $D_r$ and $\bar{S}_{-i}^*$,
    the $i$-th renewable supplier maximizes its payoff by selecting an
    equilibrium supply function $\bar{S}_i^*$ that satisfies the FOC of
    the payoff:
    \begin{equation*}
      D_r- \bar{C}(p) + C(p_f^*) -\bar{S}_{-i}^*(p)+p\big(-\bar{C}'(p)-(\bar{S}_{-i}^*(p))'\big)=0,
    \end{equation*}
    which is equivalent to 
    \begin{equation}
      \label{App:L1_FOC}
      \bar{S}^*_i(p)+p\big(-\bar{C}'(p)-(\bar{S}_{-i}^*(p))'\big)=0
    \end{equation}
    because of \eqref{RT:MarketClearing}. 
%\eqref{App:L1_FOC} holds for $p(t)\in[0,p_1)$ and the corresponding demand $D(t)\in [0, \bar{C}(p_1) + \sum_{i=1}^{N_S}\bar{S}_{i}(p_1))$. 
%Note that, for the $i$-th renewable supplier, its equilibrium supply function $\bar{S}_i$ makes every price on $[0,p_1)$ satisfy \eqref{App:L1_FOC}. In other words, its equilibrium supply function can be viewed as a solution to the ODE \eqref{App:L1_FOC}. 
Substituting 
\begin{equation*}
(\bar{S}_{-i}^*(p))'=\sum_{j\in\mathcal{S}}(\bar{S}_j^*(p))'-(\bar{S}_i^*(p))'
\end{equation*}
% Using the above formula to substitute $(\bar{S}_{-i}^*(p))'$ 
in \eqref{App:L1_FOC}, we obtain
\begin{equation*}
\bar{S}_i^*(p)+p\bigg(-\bar{C}'(p)-\sum_{i\in\mathcal{S}}(\bar{S}_i^*(p))'+(\bar{S}_i^*(p))'\bigg)=0,
\end{equation*}
which implies 
% \begingroup 
% \allowdisplaybreaks
\begin{eqnarray}
  p\big(\bar{S}_i^*(p)\big)'+\bar{S}_i^*(p) & = & p\left(\bar{C}'(p) + \sum_{i\in\mathcal{S}}\big(\bar{S}_i^*(p)\big)'\right);  \label{App:ResidualDemand_Relation} \\
  \left(p\bar{S}_i^*(p)\right)' & = & p\bar{C}'(p) + p\sum_{i\in\mathcal{S}}\big(\bar{S}_i^*(p)\big)';\notag\\ 
p\bar{S}_i^*(p) & = & \int_{0}^{p}[x\bar{C}'(x)+x\sum_{i\in\mathcal{S}}\big(\bar{S}_i^*(x)\big)']dx+c_i,\label{App:L1_Symmetry} 
\end{eqnarray}
where $c_i\in\mathbb{R}$ is a heterogeneous constant for the $i$-th renewable supplier. 
%Note that solutions to the ODE \eqref{App:L1_FOC} should also satisfy \eqref{App:L1_Symmetry}.
%Note that $p^*(t)\sum_{i=1}^{N_S}\bar{S}'_{i}(p^*(t))+p^*(t)\bar{C}'(p^*(t))$ is continuous for $p^*(t)\geq 0$ because using \eqref{App:L1_SumODE_Sol}, we can derive
%\begin{equation*}
%\sum_{i=1}^{N_S}\bar{S}'_{i}(p^*(t))= \frac{1}{N_S-1}p^*(t)^{\frac{1}{N_S-1}-1}\bigg(c_0-\frac{N_{S}^r}{N_S-1}\int_{a_0}^{p^*(t)}\bar{C}'(x)x^{-\frac{1}{N_S-1}}dx\bigg)-\frac{N_S}{N_S-1}\bar{C}'(p).
%\end{equation*}
%Due to this continuity, the integral in \eqref{App:L1_Symmetry} exists. 
%Suppose $G_1(\cdot)$ is an anti-derivative of $\bar{C}'(x)x^{-\frac{1}{N_S-1}}+x\sum_{i\in\mathcal{S}}\bar{S}_i'(x)$, then \eqref{App:L1_Symmetry} can be rewritten as
%\begin{equation*}
%\bar{S}_i(p^*)p^* = G_1(p^*) -\bigg(G_1(a_i) - b_i\bigg).
%\end{equation*}
Evaluating the above expression at $p=0$ implies that $c_i=0$ for $i\in\mathcal{S}$ in \eqref{App:L1_Symmetry} and that
the renewable suppliers have an identical equilibrium supply function, i.e., 
\begin{equation*}
\bar{S}_1^*(p) = \ldots  = \bar{S}_{N_S}^*(p) = \bar{S}(p), \qquad p \in [0,\gamma_1).
\end{equation*}
% on $[0,\gamma_1)$. 
Note that, by continuity,  % at $p=\gamma_1$, 
$\bar{S}(\gamma_1) = Q_1^r$, and
\eqref{App:L1_FOC} implies that 
\[
p(N_S-1) \bar{S}'(p) - \bar{S}(p) = - p\bar{C}'(p), \qquad p \in [0,\gamma_1).
\]
Since $\bar{S}^*_1$ is assumed to be nondecreasing 
%\hl{In Definition~1, you require $\bar{S}^*_i$ to be \textbf{increasing} -- but here you only show that it is non-decreasing. What is it?} 
and continuous, it follows that $\bar{S}_1^*(p)=Q_1^r$ for
$p\geq\gamma_1$. Moreover, $\bar{S}^*_i(\gamma_1) = \bar{S}(\gamma_1) <
Q^r_i$ for $i \in\mathcal{S}\setminus\{1\}$.

\item  Let $\gamma_2:=\min\{p: \exists i \in\mathcal{S}\setminus\{1\}, \text{ s.t. } \bar{S}^*_i(p)=Q_i^r\}$. Since $\bar{S}^*_i$ is continuous and nondecreasing and $\bar{S}^*_i(\gamma_1) < Q^r_i$ for all $i \in\mathcal{S}\setminus\{1\}$, it follows that $\gamma_2 > \gamma_1$. By repeating the analysis in (i), we obtain
\begin{equation}
\label{App:L1_Symmetry_2}
\bar{S}_i^*(p)p=\int_{\gamma_1}^{p}
\Big[x\bar{C}'(x)+x\sum_{i=2}^{N_S}\big(\bar{S}^*_i(x)\big)' \Big]dx +c_i, \quad \forall p \in [\gamma_1, \gamma_2),
\end{equation}
for all $i \in\mathcal{S}\setminus\{1\}$. By letting $p=\gamma_1$, and using the fact that $\bar{S}_i^*(\gamma_1) = Q^r_1$ for $i \in\mathcal{S}\setminus\{1\}$, we obtain\looseness =-1
\begin{align*}
c_i = \bar{S}_i^*(\gamma_1)\gamma_1 = Q_1^r\gamma_1, \quad \forall i \in\mathcal{S}\setminus\{1\}.
\end{align*}
Thus, it follows that for $p \in [\gamma_1, \gamma_2)$, we have
\begin{equation*}
\bar{S}_2^* = \ldots = \bar{S}_{N_S}^*=\bar{S}(p), 
\end{equation*}
where $\bar{S}$ is a solution to
\[
p(N_S-2) \bar{S}'(p) - \bar{S}(p) = - p\bar{C}'(p), \quad p \in [\gamma_1, \gamma_2).
\]
By continuity, we have that $\bar{S}(\gamma_2) = Q_2^r$. Since $\bar{S}^*_2$ is assumed to be nondecreasing and continuous, it follows that $\bar{S}_2^*(p)=Q_2^r$ for $p\geq\gamma_2$. Moreover, $\bar{S}^*_i(\gamma_2) =
Q^r_2 < Q^r_i$ for $i \in\mathcal{S}\setminus\{1,2\}$.

\item More generally, for $k = 3, \ldots, N_S - 1$, let $\gamma_k:= \min\{p: \exists i\in\mathcal{S}\setminus\{1,...,k-1\}, \text{ s.t. }  S^*_i(p) = Q^r_i\}$. Since $\bar{S}^*_i$ is continuous and $\bar{S}^*_i(\gamma_{k-1}) < Q^r_i$ for all $i \in\mathcal{S}\setminus\{1,...,k-1\}$, it follows that $\gamma_k > \gamma_{k-1}$. Repeating the analysis in (i) and (ii), we get that for $p \in [\gamma_{k-1}, \gamma_k)$, 
%%%%%%%%%%%%%%%%%%%%%%%%%%%%%%%%%%%%%%
\begin{equation*}
\bar{S}_2^* = \ldots = \bar{S}_{N_S}^*=\bar{S}(p), 
\end{equation*}
where $\bar{S}$ satisfies
\begin{equation*}
p(N_S-k) \bar{S}'(p) - \bar{S}(p) = - p\bar{C}'(p), \quad p \in [\gamma_{k-1},\gamma_k).
\end{equation*}
%%%%%%%%%%%%%%%%%%%%%%%%%%%%%%%%%%%%%%%% 
\end{enumerate}
Thus, we have established the symmetry result for the equilibrium supply function. Besides, $\bar{S}$ is increasing and continuous, with
$\bar{S}(\gamma_0) = 0$ at $\gamma_0:=0$, and $\bar{S}(\gamma_k) = Q^r_k$
% at $\gamma_k$ with $k\in\mathcal{S}\setminus\{N_S\}$.
for $1 \leq k \leq N_S-1$.
\looseness=-1
%It is important to note that the symmetry holds for any supply functions satisfying the corresponding FOC and ODE on each price interval, which includes the equilibrium supply functions $\bar{S}_i^*$ with $i\in\mathcal{S}$. Thus, we complete our proof by denoting as $\bar{S}$ the identical part of the equilibrium supply functions.
\end{proof}

\RTexistence*
% \paragraph{Proof of Proposition \ref{RT:Existence}}
\begin{proof}
  In the proof of Lemma~\ref{RT:Symmetry}, we implicitly show that if
    $\bar{S}(p)$ is a SFE, there exist constants~$\bar{\bm{c}}$ that
    satisfy $(i)$ and $(ii)$ above.
  
  Next, suppose the function $\bar{S}(p;\bar{\bm{c}})$ satisfies $(i)$ and
  $(ii)$.  We show that $\{\bar{S}^{*}_i(p) =
  \min\{\bar{S}(p;\bar{\bm{c}}), Q^r_i: i \in \bigS\}$ is an SFE, i.e.
  the $i$-th renewable supplier will not deviate from $\bar{S}^{*}_i(p)$ when suppliers $j \neq i$ bid
  $S^{*}_j$.   
  Consider the best response for the $i$-th supplier when all other
  renewable suppliers $j \neq i$ stick to $\bar{S}^*_j$.  % with
  % $j\in\mathcal{S}\setminus\{i\}$.
  We consider the best response for supplier~$i$ in a expanded set of supply
  functions, namely continuous functions that satisfy capacity constraints,
  i.e. $\bar{S}_i(p) \leq Q^r_i$.  
  % The feasible set of supply functions for the $i$-th supplier % contains
  % consists of 
  % all continuous functions that satisfy capacity constraints,
  % i.e. $\bar{S}_i \leq Q^r_i$. 
  Suppose the best response strategy for
  supplier~$i$ in this feasible set is $\Psi_i(p_s^*)$, where $p_s^*$ is
  the market clearing price for residual demand $D_r$. Note that since the
  supply functions of all suppliers (both renewable and conventional) are
  fixed, supplier~$i$ can control the market clearing price, and impact
  its payoff. The payoff to supplier~$i$ at the market clearing price
  $p_s^*$ is given by  
  \begin{equation*}
    g_i(p_s^*) = \big(D_r-\bar{S}^*_{-i}(p_s^*) - \bar{C}(p_s^*) + C(p_f^*) \big)p_s^*,   
  \end{equation*}
 Note that $\bar{S}_{-i}$
  is smooth except at points $\gamma_j$ for $j\in\mathcal{S}$. Then
  derivative of the above payoff function is  
  \begin{eqnarray*}
    g'(p_s^*) & = & \bigg(D_r-\bar{S}^*_{-i}(p_s^*) -
                    \bar{C}(p_s^*) + C(p_f^*) \bigg) -
                    p_s^*\bigg(\big(\bar{S}^*_{-i}(p_s^*)\big)^{\prime} +
                    \bar{C}'(p_s^*)\bigg),\\ 
              & = & \Psi(p_s^*) -
                    p_s^*\bigg(\big(\bar{S}^*_{-i}(p_s^*)\big)^{\prime} +
                    \bar{C}'(p_s^*)\bigg), 
  \end{eqnarray*}
  We slightly abuse
  our notation by using $(\bar{S}^*_{-i})^{'}$ to represent both the left hand
  derivative of $\bar{S}_{-i}$ at $\gamma_j$ and its full derivative at the other
  smooth points. 
  % where the second equality is due to the market clearing condition.
  Then  the ODE~\eqref{RT:ODE} % for the $N_S$-th renewable
  implies that 
  \begin{align*}
    p_s^*\bigg(\big(\bar{S}^*_{-i}(p_s^*)\big)^{\prime} + \bar{C}'(p_s^*)\bigg)=\begin{cases}
      \bar{S}(p_s^*), & \text{if}\ \ p_s^* \in (\gamma_{0},\gamma_i],\\
      % \vspace{0.1cm}\\
      \bar{S}(p_s^*) + p_s^*\bar{S}'(p_s^*), & \text{if}\ \ p_s^* \in (\gamma_i,\gamma_{N_S}].
\end{cases}
\end{align*}
Thus, it follows that
\begin{align*}
g'(p_s^*)  = 
  \begin{cases}
    \Psi(p_s^*) - \bar{S}(p_s^*), & \text{if}\ \ p_s^* \in (\gamma_{0},\gamma_i],\\
    \Psi(p_s^*) - \bar{S}(p_s^*) - p_s^*\bar{S}'(p^*), & \text{if}\ \ p_s^* \in (\gamma_i,\gamma_{N_S}].
  \end{cases}
\end{align*}
Next, we consider the following two cases:
\begin{enumerate}[(a)]
\item $p_s^* \in (\gamma_{0},\gamma_i]$: Suppose $\Psi(p_s^*) = Q^r_i$,
  that is, the $i$-th supplier produces at its full capacity. In this
  case, the supplier can only reduce the supply locally, or equivalently
  increase the market clearing price (because the slack has to be picked
  up by the other renewable suppliers and the conventional
  supplier). %Since $\Psi$ is an optimal supply function, it follows that
             %$g_i'(p^*) \leq 0$, i.e there is no incentive for $i$ to
             %increase the market clearning price. For $p^* < \gamma_i$.  
  Since $\Psi$ is the optimal supply function for the supplier, it follow
  that 
  $0 \leq g_i'(p_s^*) = \Psi(p_s^*) - \bar{S}(p_s^*) = Q^r_i - \bar{S}(p_s^*)
> 0$, since $\bar{S}(p_s^*) < Q^r_i$ for $p^*\in
(\gamma_{0},\gamma_i]$. A contradiction. Therefore, $\Psi(p_s^*) < Q^r_i$. 
  
Since $\Psi(p_s^*) < Q^r_i$, it is possible for the supplier~$i$ to both
increase and decrease~$p_s^*$. Thus, it follows that $g'(p_s^*) =
\Psi(p_s^*) - \bar{S}(p_s^*) = 0$, i.e., $\Psi(p_s^*) = \bar{S}(p_s^*)$,
for $p^* \in (\gamma_{0},\gamma_i]$. 
 
\item $p_s^* \in (\gamma_i,\gamma_{N_S}]$. Suppose $\Psi(p_s^*) <
  Q^r_i$. Then following the argument above, we must have $g_i'(p_s^*) =
  0$, i.e., $\Psi_i(p_s^*) = \bar{S}(p_s^*)  + p_s^* \bar{S}'(p_s^*) >
  \bar{S}(p_s^*) \geq Q^r_i$, where the strict inequality is due to the
  fact that $\bar{S}'(p^*) > 0$. A contradiction. Thus, we must have that
  $\Psi(p_s^*) = Q^r_i$. 
\end{enumerate}
Since $\bar{S}$ is continuous and increasing, with $\bar{S}(\gamma_i) = Q^r_i$, it
follows that $\Psi(p_s^*) = \min\{\bar{S}(p_s^*),Q^r_i\}$.

Next, we show that the constants $(\bar{c}_2,\ldots,\bar{c}_{N_S-1})$ are
increasing functions of $\bar{c}_1$ when the continuity conditions 
%the supply functions $\{\sigma(p;\bar{c}_k)\}_{k=1}^{N_S-1}$ 
%defined in Proposition \ref{RT:Existence} % are continuous at $\gamma_k(\bar{c}_k)$
% where $k\in\mathcal{S}\setminus\{N_S\}$
%satisfy
\begin{equation}
\label{eq:IncreasingConst}
\sigma_{k}(\gamma_k(\bar{c}_k);\bar{c}_k) =
\sigma_{k+1}(\gamma_k(\bar{c}_k);\bar{c}_{k+1}),
\end{equation}
hold for $k = 1, \ldots, N_S-2$. % Consider
For
$k=1$,  since
$\sigma_{1}(\gamma_1(\bar{c}_1);\bar{c}_1) = \sigma_{2}(\gamma_1(\bar{c}_1);\bar{c}_{2})$,
% Using
\eqref{RT:Sol_0}--\eqref{RT:Sol_1} imply that 
  \begin{equation*}
    \gamma_1(\bar{c}_1)^{\frac{1}{N_S-1}}\left(\bar{c}_1
      -\frac{1}{N_S-1}F_1\big(\gamma_1(\bar{c}_1)\big)\right)  
    = 
    \gamma_1(\bar{c}_1)^{\frac{1}{N_S-2}}
    \left(\bar{c}_2+\frac{1}{N_S-2}F_{2}\big(\gamma_1(\bar{c}_1)\big)\right). 
\end{equation*}
Rearranging, we obtain
\begin{equation}
\label{Prop1:c_1}
\bar{c}_2 = \gamma_1(\bar{c}_1)^{-\frac{1}{(N_S-1)(N_S-2)}} \left(\bar{c}_1-\frac{1}{N_S-1}F_1\big(\gamma_1(\bar{c}_1)\big)\right)
-\frac{1}{N_S-2}F_{2}\big(\gamma_1(\bar{c}_1)\big). 
\end{equation}
% The above equation suggests that to guarantee the continuity of
% $\{\sigma_i(p;\bar{c}_i)\}_{i=1}^2$ at $\gamma_1(\bar{c}_1)$, there is
% only one degree of freedom for the two constants $\bar{c}_1$ and
% $\bar{c}_2$. If we vary $\bar{c}_1$,
Thus, \eqref{eq:IncreasingConst} implies that  
$\bar{c}_2$ % is automatically determined. 
% Furthermore, $\bar{c}_2$
is a function of $\bar{c}_1$. Furthermore, $\bar{c}_2$ is an  
increasing function of $\bar{c}_1$ because % of the facts that 
$(\gamma_1(\bar{c}_1))^{-\frac{1}{(N_S-1)(N_S-2)}}$  is increasing in
$\bar{c}_1$, % as $\gamma_1(\bar{c}_1)$ is decreasing in $\bar{c}_1$,  
and % that
$F_1(\gamma_1(\bar{c}_1))$ and
$F_{2}(\gamma_1(\bar{c}_1))$ are decreasing in $\bar{c}_1$
since $\gamma_1(\bar{c}_1)$ is decreasing in $\bar{c}_1$, as proved previously, and $F_i'(p) = p^{-1/(N_S-i)}\bar{C}'(p) > 0$ for all $p$ and $i=1, 2$. 
% By repeating the same
Repeating this 
analysis for $k=2,\ldots,N_S-1$, we establish a recursion that $\bar{c}_k$
is increasing in $\bar{c}_{k-1}$. Thus, % the constant 
$\bar{c}_k$, % with
% $k\in\mathcal{S}\setminus\{1, N_S\}$
$k = 1, \ldots, N_S-1$, 
are increasing functions of $\bar{c}_1$.
  
  % {\color{blue}The ``if" part is trivially true. If such constants exist,
  % there are continuous and increasing supply functions that satisfy the
  % ODE \eqref{RT:ODE}. Therefore, the SFE candidate set is nonempty. Next,
  % we show the ``only if" part.}
  % To prove the other direction,
Next, we show that the set of constants $\bar{\bm{c}}$ that satisfy $(i)$
and $(ii)$ is not an empty set, i.e., there always exists an SFE. 
% For $k = 1, \ldots, N_S-1$, 
Let $\sigma_k(p)$ denote the solution of the ODE
\begin{equation}
  \label{eq:SFE-ODE}
  p(N_S-k) \sigma_k'(p) - \sigma_k(p) = - p\bar{C}'(p), \quad k = 1, \ldots, N_S-1.
\end{equation}
% Then
% according to
In Appendix \ref{sc:AppSolODE}, we establish that % $\sigma_k(p)$ is given by
\begin{equation*}
  \sigma_k(p;\bar{c}_k) =
  \begin{cases}
    0, & p = 0; \\
    p^{\frac{1}{N_S-k}}\bigg(\bar{c}_k-\frac{1}{N_S-k}F_{k}(p)\bigg), &
    p > 0,
  \end{cases}
\end{equation*}
where $\bar{c}_k$ is a constant and $F_{k}(p)$ is an antiderivative of
$\big(p\big)^{-1/(N_S-k)}\bar{C}'(p)$.  
% To show results (i)--(iii), we begin with the monotonicity condition and
% establish lower bounds on $\bar{c}_k$ such that$\sigma_k'(p;\bar{c}_k)>0$ and $\sigma_k(p;\bar{c}_k) < Q^r_k$ for $k\in\mathcal{S}\setminus\{N_S\}$. 
The derivative
\begin{equation}
  \label{App:Derivative}
  \sigma_k'(p;\bar{c}_k)
  % &=\frac{1}{N_S-1}p_s^{\frac{1}{N_S-1}-1}\bigg(\frac{c_1}{N_S}-\frac{1}{N_S-1}h(p_s)\bigg)-\frac{1}{N_S-1}C'(p_s)\\   
    =\frac{1}{N_S-k}\left[p^{\frac{1}{N_S-k}-1}\left(\bar{c}_k-\frac{1}{N_S-k}F_k(p)\right)-\bar{C}'(p)\right]  
  \end{equation}
  is an increasing linear function of the constant $\bar{c}_k$. So,
  $\sigma_k'(p;\bar{c}_k) > 0$, if and only if, 
  \begin{equation*}
    \label{App:P1_Inequality}
    \bar{c}_k>\left(p^{\frac{N_S-(k+1)}{N_S-k}}\bar{C}'(p) +\frac{1}{N_S-k}F_k(p)\right).
  \end{equation*}
  Thus, $\sigma_k'(p;\bar{c}_k)>0$ for all $p \in [0,p_k]$ and $p_k > 0$, if and only if 
  \begin{align}
    \label{App:P1_c1}
    \bar{c}_k> \underline{c}_k(p_k):=\max_{p\in[0,p_k]}\left\{p^{\frac{N_S-(k+1)}{N_S-k}}\bar{C}'(p)+\frac{1}{N_S-k}F_k(p)\right\},
  \end{align} 
  where $\underline{c}_k(p_k) < \infty$ because $F_k$ is bounded as the
  limit $\lim_{p\rightarrow 0^+}p^{1/(N_S-k)-1}F_k(p)$ exists and
  $\bar{C}'$ is bounded by assumption. Note that, by definition,
  $\underline{c}_k(p_k)$ is a non-decreasing function of $p_k$. 

According to \eqref{App:P1_c1}, $\sigma_k(p;\underline{c}_k(p_k))$
increases on $[0,p_k]$ and achieves its maximum value at $p=p_k$. Choose
$p_k$ such that $\sigma_k(p_k;\underline{c}_k(p_k))=Q_k^r$. Then
the monotonicity of $\sigma_k(p;\underline{c}_k)$ holds for any $\bar{c}_k
> \underline{c}_k(p_k)$ because we have $\sigma_k'(p;\bar{c}_k) >
\sigma_k'(p,\underline{c}_k(p_k)) > 0$ for all $p \in [0,p_k]$. In other
words, increasing $\bar{c}_k$ does not change the monotonicity of
$\sigma_k(p;\bar{c}_k)$ as long as $\bar{c}_k > \underline{c}_k(p_k)$. In
addition, $\sigma_k(p;\bar{c}_k) > \sigma_k(p;\underline{c}_k(p_k))$ for
$\bar{c}_k > \underline{c}_k(p_k)$. That is, $\sigma_k(p;\bar{c}_k)$
increases with $\bar{c}_k$.

Let
\begin{equation}
  \label{eq:gammak-def}
  \gamma_k(c):=\min\{p:\sigma_k(p;c)=Q_k^r\}
\end{equation}
denote the function that defines the price $p$ at which
$\sigma_k(\cdot;c)$ reaches $Q_k^r$ as a function of $c$. Then, it is clear
that $\gamma_k(\bar{c}_k) < p_k$ for $\bar{c}_k
> \underline{c}_k(p_k)$,
% Consequently, the supply function
% $\sigma_k(p;\bar{c}_k)$ reaches $Q_k^r$ at a lower price than $\bar{p}_k$,
% denoted as 
and hence, $\sigma_k'(p;\bar{c}_k) > 0$ for all $p \in
[0,\gamma_k(\bar{c}_k)]$ and $k\in\{1,...,N_S-1\}$. In addition, by definition, $\gamma_k(\bar{c}_k)$
is continuous and decreases with $\bar{c}_k$ due to the fact that
$\sigma_k(p;\bar{c}_k)$ becomes ``steeper" with larger $\bar{c}_k$. In the limit, as
$\bar{c}_k\rightarrow\infty$, we have
$\sigma_k'(p,\bar{c}_k)\rightarrow\infty$ according to
\eqref{App:Derivative} and $\gamma_k(\bar{c}_k)\rightarrow 0$ by its
definition. 

Next, we use a forward and backward induction approach to ``stitch" the
supply functions $\{\sigma_k(p;\bar{c}_k)\}_{k=1}^{N_S-1}$ together. In
the forward induction, given a set of constants 
$(\bar{c}_1,...,\bar{c}_{k-1})$ determined by the analysis above, we
choose the constant $\bar{c}_{k}$ such 
that the supply function $\sigma_{k}(p;\bar{c}_{k})$ reaches $Q_{k-1}^r$ 
at a price lower than that of the function  $\sigma_{k-1}(p;\bar{c}_{k-1})$, i.e., 
\begin{equation}
\label{app:Prop1_p}
\chi_{k-1}(\bar{c}_{k}):=\min\{p:\sigma_{k}(p;\bar{c}_{k})=Q_{k-1}^r\} \leq \gamma_{k-1}(\bar{c}_{k-1}).
\end{equation}
for all $k\in\mathcal{S}\setminus\{1,N_S\}$. To this end, for $k=1$, we
% start from any
choose
$\bar{c}_1 >\bar{c}_1(p_1)$, where $p_1$ makes $\sigma_1(p_1;\bar{c}_1(p)_1)=Q_1^r$.  For $2\leq k\leq N_S-1$,
% we select $\bar{c}_k$ that satisfies
choose
\begin{equation}
\label{app:Prop1_c_2}
\bar{c}_k \geq \min\bigg\{c: c\geq \bar{c}_k(p_k)\ \text{and} \ \min\{p:\sigma_{k}(p;c)=Q_{k-1}^r\} \leq \gamma_{k-1}(\bar{c}_{k-1})\bigg\},
\end{equation}
% In the above formula,
where
the first condition % in the parenthesis 
ensures the monotonicity of the supply function % as discussed earlier;
and
the second condition is the requirement \eqref{app:Prop1_p}. The constant
$\bar{c}_k$ is iteratively determined because the recursion relies on
$\bar{c}_{k-1}$ and $\gamma_{k-1}(\bar{c}_{k-1})$ determined in the
previous iteration. The right hand side is always nonempty, because, as
$c\rightarrow\infty$, the first condition is certainly met; the second  condition is also satisfied due to
\begin{equation*}
\lim_{c\rightarrow\infty} \min\{p:\sigma_{k}(p;c)=Q_{k-1}^r\} = 0.
\end{equation*}
In the backward induction, we start from $k=N_S-1$ and fix
$\bar{c}_{N_S-1}$ as discussed above. Then from $k=N_S-2$ to $k=1$, we
iteratively set the constant $\bar{c}_k$ as  
\begin{equation}
\label{app:Prop1_c_3}
\bar{c}_k = \min\{c: \sigma_{k}(\chi_k(\bar{c}_{k+1});c)=Q_k^r\}, 
\end{equation}
% \eqref{app:Prop1_c_3}
i.e., we 
require that the supply function $\sigma_{k}(p;\bar{c}_k)$ reaches
$Q_k^r$ at $p=\chi_k(\bar{c}_{k+1})$. 
It is important to note that since $\chi_k(\bar{c}_{k+1})\leq
\gamma_k(\bar{c}_k)$, we need to further increase $\bar{c}_k$ determined
by \eqref{app:Prop1_c_2} to satisfy \eqref{app:Prop1_c_3}. Thus, the
monotonicity of $\sigma_k(p;\bar{c}_k)$ is preserved. Since the functions
$\{\sigma_k(\cdot;\bar{c}_k)\}_{k=1}^{N_S-1}$ are continuous and
increasing, the prices $\{\gamma_k(\bar{c}_k)\}_{k=1}^{N_S-1}$ follows an
increasing order: 
\begin{equation*}
0:=\gamma_0(\bar{c}_0) <\gamma_1(\bar{c}_1)<...<\gamma_{N_S-1}(\bar{c}_{N_S-1}).
\end{equation*}
The ``stitched'' function is defined as
\begin{equation}
  \label{eq:stitched-fn}
  \bar{S}(p) := \sigma_k(p;\bar{c}_k), \qquad p \in
  [\gamma_{k-1}(\bar{c}_{k-1}),\gamma_{k}(\bar{c}_{k})],\ k = 1, \ldots, N_S-1.
\end{equation}
Using L'Hopital's rule, we can check that
$\lim_{p\rightarrow 0^+}\bar{S}(p) = 0$, and 
$\lim_{p\rightarrow 0^+}\bar{S}'(p) = 0$. Thus, we have established that
the set of SFEs is not empty. 
\end{proof}

\RTUniqueness*
\begin{proof}
Since the derivative
\begin{equation*}
\sigma_k'(p;\bar{c}_k)=\frac{1}{N_S-1}\Big[p^{\frac{1}{N_S-k}-1}\bigg(\bar{c}_1-\frac{1}{N_S-k}F_k(p)\bigg)-\bar{C}'(p)\Big],
\end{equation*}
is a linear function of $\bar{c}_1$ and $(p)^{1/(N_S-k)-1}>0$, it follows
that the supply function $\sigma_1(p;\bar{c}_k)$ becomes ``steeper" as
$\bar{c}_k$ increases. This, in conjunction with
Proposition~\ref{RT:Existence}, implies that $\sigma_k'(p;\bar{c}_k)$ is
an increasing function of $\bar{c}_1$. That is, if we increase $\bar{c}_1$,
the entire supply function
$\bar{S}(p)$ % =\{\sigma_k(p;\bar{c}_k)\}_{k=1}^{N_S-1}
becomes ``steeper". 
%Then it follows that the derivative of the ``stitched'' function $\bar{S}'$ defined in~\eqref{eq:stitched-fn} is increasing in $\bar{c}_1$.

Let $p^\ast_s(\bar{c}_1)$ denote the market clearing price as a function of $\bar{c}_1$. For a given demand $D(t)$, the market clearing condition \eqref{RT:MarketClearing} implies that a steeper $\bar{S}$ results in a smaller market clearing price, and therefore, $p^\ast_s(\bar{c}_1)$ decreases in $\bar{c}_1$. 

Next, consider the payoff $\Pi^{\mbox{RT}}_i(\bar{c}_1)$ of the $i$-th renewable supplier $i$ as a function of $\bar{c}_1$
\begin{equation*}
\Pi^{\mbox{RT}}_i(\bar{c}_1) = \Big(D_r- \bar{C}(p_s^*(\bar{c}_1)) + C(p_f^*) % -(N_S-1)\sigma_1\big(p_s^*(\bar{c}_1);\bar{c}_1\big)\bigg)
- \bar{S}^\ast_{-i}(p_s^*(\bar{c}_1);\bar{c}_k(\bar{c}_1))\Big)
p_s^{*}(\bar{c}_1),
\end{equation*}
where we explicitly account for the fact that the equilibrium supply function $\bar{S}^\ast_j(\cdot;\bar{c}_k(\bar{c}_1))$ for $j\in\mathcal{S}\setminus\{i\}$ depends on $\bar{c}_1$.
% where we use $p_s^*(\bar{c}_1)$ to emphasize the relationship between $p_s^*$ and $\bar{c}_1$. 
Then the derivative w.r.t. $\bar{c}_1$ is
%of the above payoff w.r.t. $\bar{c}_1$ i
%\begin{align*}
%(p_s^{*}(\bar{c}_1))' +p_s^*(\bar{c}_1)\big(-\bar{C}'(p_s^*(\bar{c}_1))-(N_S-1)\sigma_1'(p_s^*(\bar{c}_1);\bar{c}_1)\big)\bigg)\\
%\hspace{5cm} - (N_S-1)\bigg(p_s^*(\bar{c}_1)\bigg)^{\frac{1}{N_S-1}+1}.
%\end{align*}
\begin{align}
&\frac{\mathrm{d}\Pi^{\text{RT}}_i(\bar{c}_1)}{\mathrm{d}\bar{c}_1}
  = \ \frac{\mathrm{d}p_s^{*}(\bar{c}_1)}{\mathrm{d}\bar{c}_1}\Big(D_r-
      \bar{C}(p_s^*(\bar{c}_1))+
      C(p_f^*)
      -\bar{S}^\ast_{-i}(p_s^*(\bar{c}_1);\bar{c}_k(\bar{c}_1))\Big)\notag\\ 
    &\hspace{2cm}- p_s^*(\bar{c}_1)\Big(\bar{C}'(p_s^*(\bar{c}_1)) +
      \Big(\bar{S}^\ast_{-i}(p_s^*(\bar{c}_1);\bar{c}_k(\bar{c}_1))\Big)'\Big)\frac{\mathrm{d}
      p_s^{*}(\bar{c}_1)}{\mathrm{d}\bar{c}_1} \notag\\
      &\hspace{2cm}- p_s^*(\bar{c}_1)
      \frac{\partial
      \bar{S}^*_{-i}(p_s^*(\bar{c}_1);\bar{c}_k(\bar{c}_1)))}{\partial
      \bar{c}_k}\frac{\mathrm{d}
      \bar{c}_k(\bar{c}_1)}{\mathrm{d}\bar{c}_1}\notag\\ 
=& \frac{\mathrm{d} p_s^{*}(\bar{c}_1)}{\mathrm{d}\bar{c}_1} \underbrace{\Big(D_r- \bar{C}(p_s^*(\bar{c}_1)) + C(p_f^*) - \bar{S}^\ast_{-i}(p_s^*(\bar{c}_1);\bar{c}_k(\bar{c}_1)) - p_s^*(\bar{c}_1)\Big(\bar{C}'(p_s^*(\bar{c}_1)) + \Big(\bar{S}^\ast_{-i}(p_s^*(\bar{c}_1);\bar{c}_k(\bar{c}_1))\Big)'\Big)\Big)}_{(a)} \notag\\
&- p_s^*(\bar{c}_1)\sum_{j\neq i}\underbrace{\frac{\partial \bar{S}^*_{j}(p_s^*(\bar{c}_1);\bar{c}_k(\bar{c}_1))}{\partial \bar{c}_k}\frac{\mathrm{d} \bar{c}_k(\bar{c}_1)}{\mathrm{d}\bar{c}_1}}_{(b)}. \label{RT:Derivative_c_1}
\end{align}
%\hl{Can you fix the alignment in the equations above? The arguments below refer to specific terms in the formula above -- can you mark them individually? I have shown you how to do this as an example. }
In the above formula, $\frac{\mathrm{d}p_s^{*}(\bar{c}_1)}{\mathrm{d}\bar{c}_1}<0$ becasue $p_s^{*}(\bar{c}_1)$ is decreasing in $\bar{c}_1$ as stated earlier. The parenthetical term $(a)$ in
the first line of \eqref{RT:Derivative_c_1} is always non-negative because
it is the FOC of the $i$-th supplier. Note that the FOC is zero before the
$i$-th supplier reaches its capacity and nonnegative when the supplier is
at the capacity (otherwise the supplier would bid less than $Q_i^r$ to
increase its payoff, which contradicts the result in Proposition
\ref{RT:Existence}). Moreover, the term $(b)$ is positive because
$\frac{\mathrm{d}\bar{c}_k(\bar{c}_1)}{\bar{c}_1} >0$ according to Proposition
\ref{RT:Existence}, and $\frac{\partial
  \bar{S}^*_{j}(p_s^*(\bar{c}_1);\bar{c}_k(\bar{c}_1))}{\partial
  \bar{c}_k}= \frac{1}{N_S-k}(p_s^*(\bar{c}_k))^{1/(N_S-k)-1}>0$, for
$p_s^*(\bar{c}_k)\in [\gamma_{k-1},\gamma_k]$ and $k=1,...,N_S-1$, due to
\eqref{RT:Sol_1}. Hence,
$\frac{\mathrm{d}\Pi^{\text{RT}}_i(\bar{c}_1)}{\mathrm{d}\bar{c}_1}<0$,
i.e., the supplier's payoff is decreasing in $\bar{c}_1$ and choosing the
smallest $\bar{c}_1$ that guarantees the increasing and continuity of the
supply function maximizes the payoff. \looseness = -1
\end{proof}

%\paragraph{Proof of Lemma \ref{RT:InitialPrice}} RTInitialPrice
\RTInitialPrice*

\begin{proof}
  % The proof consists of two parts. The first part shows the symmetry of the
  % equilibrium supply functions. The second demonstrates the existence and
  % uniqueness of the initial price. 
  % Suppose that the residual demand $D_r(t)$ lies in the range
  % $[\underline{D}_r,\overline{D}_r]$, where $\overline{D}_r>\underline{D}_r
  % > 0$, 
  % , and $\bar{S}_i$ is the $i$-th renewable supplier's supply
  % function. %corresponding to that demand range. 
  % So the smallest residual demand, denoted as $\underline{D}- 
  % \max_{t\in\mathcal{T}}\{\sum_{i=1}^{N_S}S_i(p_f^*(t))+C(p_f^*(t))\}$, is
  % positive. For simplicity, we assume that renewable suppliers have
  % symmetric capacities. We can extend the analysis to the asymmetric case
  % by setting their supply functions as their capacities if some renewable
  % suppliers are bounded by their capacities before the total supply exceeds
  % $\underline{D}$.  
  % Similar
  % to

  All the suppliers bid supply functions $S_{i}(p)$ that are continuous,
  piecewise differentiable, 
  and non-decreasing for $p \in [0,\infty)$. However, unlike in
  Lemma~\ref{RT:Symmetry} the FOC conditions are no longer valid
  for an interval of the form $[0,p_{\max}]$ for some $p_{\max} > 0$,
  since the smallest residual demand $\underline{D}_r > 0$. We show below
  that this does not impact that symmetry result. 

  Let $\bar{S}^{*}_i$, $ i \in \bigS$ denote any SFE. Then the FOC for
  optimality implies that
  \[
    \underline{S}_i^*(p)  - p \big( \sum_{j \neq i} (\bar{S}^*_j)'(p) +
    \bar{C}'(p)\big) 
    \begin{cases}
      \leq 0, & \bar{S}^*_i(p) = Q^r_i,\\
      = 0, & \bar{S}^*_i(p) < Q^r_i,
    \end{cases}
  \]
  where we abuse notation by using $(\bar{S}^*_i)'$ to denote the
  derivative, when it exists, or the correct directional derivative. 

  Let $\bigU = \{i: \bar{S}_i^*(\gamma_0) < Q^r_i\}$ denote the set of
  suppliers have residual capacity at the price $\gamma_0$. By continuity,
  it follows that $\gamma_1:=\min\{p: \bar{S}_i^*(p)=Q_i^r, \text{for some
  } i \in \bigU\} > \gamma_0$. Then, the argument used in
  Lemma~\ref{RT:InitialPrice_Exd} implies that for $p
  \in [\gamma_0,\gamma_1]$, the FOC can be rewritten as
  \[
    \big( p \bar{S}^*_i(p) \big)' = p \Big( \sum_{j \in \bigU}
    \bar{S}^*_j(p) + \bar{C}'(p)\Big).
  \]
  Thus, $p \bar{S}^*_i(p) = \gamma_0 S_i^*(\gamma_0) + \int_{\gamma_0}^p
  x\big( \sum_{j\in \bigU} (S_j^*)'(x) + \bar{C}'(x)\big) dx$. Since
  $\gamma_0 S_i^*(\gamma_0) $ can be chosen without impacting the FOC, 
  it follows that $S_i^*(\gamma_0)$ must an equilibrium at
  the price $\gamma_0$, or equivalently, 
  \[
    S_i^*(\gamma_0) \in {\argmax_q} \Big\{ q \bar{C}^{-1}\Big(\underline{D}_r -
    \sum_{j \not \in \bigU} Q^r_j - \sum_{j \in \bigU, j \neq i} \bar{S}^*_{{j}}(\gamma_0) - q\Big)\Big\},
  \]
  where we have used the market clearing condition~\eqref{RT:MarketClearing}. 
  Setting the first order derivative equal to zero, it follows that the maximum is attained at
  \[
    S_i^*(\gamma_0) = \frac{\bar{C}^{-1}\Big(\underline{D}_r
    - \sum_{j \not \in \bigU} Q^r_j - \sum_{j \in \bigU, {j \neq i}}
    \bar{S}^*_{{j}}(\gamma_0)\Big)}{\big(\bar{C}^{-1}\big)'\Big(\underline{D}_r
    - 
    \sum_{j \not \in \bigU} Q^r_j- \sum_{j \in \bigU, {j \neq i}} \bar{S}^*_{{j}}(\gamma_0)\Big)}.
  \]
  Thus, it follows that $\bar{S}^*_i$ is identical for all $i \in
  \bigU$. Now, following the argument in Lemma~\ref{RT:Symmetry} it
  follows that for for $p \geq \gamma_0$, we have $\bar{S}_i^*(p) = \min\{\bar{S}(p),Q^r_i\}$ for some
  non-decreasing, piecewise differentiable function $\bar{S}$.

  Recall that in Proposition~\ref{RT:Uniqueness}  we establish that the SFE corresponds to the lowest possible supply function. The same is true in the setting here. Thus, we would like $\gamma_0$, the price
  corresponding to $\underline{D}_r$ to be the largest possible, i.e.,
  $
  \gamma_0 = \bar{C}^{-1}(\underline{D}_r).
  $
  Note that $\gamma_0$, the supply from all the renewable suppliers is
  zero. Let $\bar{S}^{*}_0(p)$ denote the optimal supply function for when
  the residual demand $\bar{D}_r = 0$, i.e., the SFE identified in 
  the proof of Proposition~\ref{RT:Uniqueness}. Then, the SFE for
  the setting considered here is given by $\bar{S}^*_i(p)  =
  \bar{S}_{0}^*(\max\{p-\gamma_0,0\})$, $i \in \bigS$. 
\looseness=-1
\end{proof}
We need the following result to establish Proposition~\ref{DA:Existence}.  
\begin{lemma}
  \label{App:p_s_derivative}
  Suppose the RT market is active. Then the ex post RT equilibrium price
  $p_s^*$ is a decreasing function of the  DA price
  $p_f$, and $-\infty < \frac{\partial p_s^*}{\partial p_f} < 0$. 
  % if it is non zero, i.e., $0 > (p_s^*)' >-\infty$ given $p_s^*\neq
  % 0$ and $\mathcal{S}_r\neq\emptyset$. 
\end{lemma}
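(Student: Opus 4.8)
The plan is to fix a time $t$ at which the RT market is active (so $D_r(t)>0$), treat the DA clearing price $p_f:=p_f^*(t)$ as a parameter with the realized demand $D(t)$ and capacities $\{Q_i(t)\}$ held fixed, and express RT market clearing as an implicit equation relating $p_f$ and the RT clearing price $p_s:=p_s^*(t)$. Substituting the definition of $D_r(t)$ into \eqref{RT:MarketClearing} and using Lemma~\ref{RT:Symmetry} and Proposition~\ref{RT:Existence} to write $\bar{S}_i^*(p)=\min\{\bar{S}(p),Q_i^r\}$ for the stitched equilibrium function $\bar{S}$, RT clearing becomes $F(p_s,p_f)=0$, where
\[
F(p_s,p_f):=\sum_{i\in\mathcal{S}}\min\{S(p_f),Q_i(t)\}+\sum_{i\in\mathcal{S}_r}\bar{S}_i(p_s;p_f)+\max\{C(p_s),C(p_f)\}-D(t),
\]
and the notation $\bar{S}_i(\,\cdot\,;p_f)$ records that, through \eqref{RT:ResidualCapacity}, the residual caps $Q_i^r$---and hence both the truncation levels of the $\bar{S}_i$ and the breakpoints and integration constants defining $\bar{S}$---depend on $p_f$. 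I would first check that $F$ is strictly increasing in $p_s$ (so $p_s^*$ is a well-defined function of $p_f$), and then apply the implicit function theorem to get $\partial p_s^*/\partial p_f=-(\partial F/\partial p_f)/(\partial F/\partial p_s)$; the entire argument then reduces to signing and bounding the two partial derivatives.

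For the denominator, $\partial F/\partial p_s=\sum_{i\in\mathcal{S}_r}\partial_{p_s}\bar{S}_i(p_s)+C'(p_s)\,\mathbf{1}\{C(p_s)>C(p_f)\}$ is a sum of nonnegative terms (supply curves are nondecreasing). It is strictly positive because, by the RT SFE structure of Proposition~\ref{RT:Existence}, the marginal renewable supplier clears strictly inside some interval $(\gamma_{k-1},\gamma_k)$ on which $\bar{S}$ is strictly increasing, so its slope there is positive; and it is finite because $\bar{C}'$ is bounded by assumption and the ODE branches \eqref{RT:Sol_0}--\eqref{RT:Sol_1} are smooth with $\bar{S}'(0^+)=0$. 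Hence $0<\partial F/\partial p_s<\infty$, which already delivers $\partial p_s^*/\partial p_f>-\infty$ once the numerator is shown to be finite.

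The harder term is the numerator, where I want $0<\partial F/\partial p_f<\infty$. Raising $p_f$ has a \emph{direct} effect---it raises committed renewable output $\sum_i\min\{S(p_f),Q_i(t)\}$, and, when $p_s^*<p_f$, also raises the conventional floor $\max\{C(p_s^*),C(p_f)\}=C(p_f)$---and an \emph{indirect} effect: through \eqref{RT:ResidualCapacity} it lowers each $Q_i^r$, which weakly lowers the $\bar{S}_i$. The direct effect alone gives $\partial F/\partial p_f\ge S'(p_f)>0$, since at least the marginal renewable supplier is uncapped in the DA market at time $t$ (it has positive residual capacity, hence $S(p_f(t))<Q_i(t)$). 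The indirect effect is the only thing that could overturn this, and the plan to control it is the feasibility constraint behind \eqref{RT:ResidualCapacity}, namely $\bar{S}_i(p)+S(p_f^*(t'))\le Q_i(t')$ for every $t'$: for a renewable supplier producing at its residual cap in RT whose binding time in \eqref{RT:ResidualCapacity} is $t$ itself, one has $Q_i^r=Q_i(t)-S(p_f)$, so the drop $-S'(p_f)$ in $Q_i^r$ exactly cancels that supplier's $+S'(p_f)$ in the committed term; for a capped supplier whose binding time is $t'\ne t$, $Q_i^r$ is locally constant in $p_f$ while its committed term still rises; and suppliers below their residual cap in RT are unaffected by the truncation. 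Adding the nonnegative conventional-floor term, the net is still $\partial F/\partial p_f\ge S'(p_f)>0$, and finiteness follows from boundedness of $S'$, $C'$ and Lipschitzness of each $Q_i^r$ in $p_f$. Together with the denominator, this gives $-\infty<\partial p_s^*/\partial p_f<0$.

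The step I expect to be the main obstacle is a second, more subtle piece of the indirect effect: lowering $Q_j^r$ does not merely truncate $\bar{S}_j$ but also shifts the breakpoints $\gamma_j$ and integration constants $\bar{c}_j$, thereby changing the \emph{shape} of the stitched function $\bar{S}$ and hence the value $\bar{S}(p_s^*)$ offered by the marginal supplier. My plan to handle this is to avoid differentiating $\bar{S}$ pointwise and instead track, for each renewable supplier, the total time-$t$ output $\min\{S(p_f),Q_i(t)\}+\bar{S}_i(p_s^*)$, which is confined to the $p_f$-independent band $[0,Q_i(t)]$: a shape change of $\bar{S}$ can then only reallocate energy between the committed and incremental components within this band, preserving the sign of the net effect. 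Making this ``reallocation within a fixed band'' reasoning precise---in particular checking that the perturbed $\bar{S}$ is still the unique revenue-maximizing RT SFE of Proposition~\ref{RT:Uniqueness}, so that the comparative statics are along the equilibrium manifold---is where the real care is needed; everything else is routine differentiation and sign-checking.
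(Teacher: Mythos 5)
Your plan is the same in spirit as the paper's: differentiate the RT market‑clearing identity with respect to $p_f$ and sign the two partials, which is an implicit‑function‑theorem argument even though the paper writes it informally. Where you genuinely diverge is in noticing a subtlety the paper's short proof silently ignores. The paper holds the realized $D$, $\{Q_i\}$ and the DA supply functions $S_i$ fixed and, implicitly, also holds the RT supply functions $\bar{S}_i^*$ fixed as functions; under that reading ``DA supply up, residual demand down, $p_s^*$ down'' is immediate, and finiteness follows because the slope $\sum_i \bar{S}_i'(p_s^*)+\bar{C}'(p_s^*)$ of the fixed RT supply curve is strictly positive. That reading is also the one the paper actually uses downstream: the FOC expansions in Propositions~\ref{DA:Existence} and~\ref{DA:PriceGap} differentiate $\bar{S}_i^*(p_s^*)p_s^*$ by chain rule as if the function $\bar{S}_i^*$ carried no $p_f$-dependence.

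You, by contrast, correctly observe that the RT SFE is \emph{not} parameter‑free in $p_f$: via \eqref{RT:ResidualCapacity}, raising $p_f$ lowers each $Q_i^r$, which moves the truncation levels and, through the stitching in Proposition~\ref{RT:Existence}, also the breakpoints $\gamma_k$ and constants $\bar{c}_k$ defining $\bar{S}$. Your cancellation bookkeeping for the truncation piece is right: for a supplier at its cap whose binding time in \eqref{RT:ResidualCapacity} is $t$, the $-S'(p_f)$ drop in $Q_i^r$ exactly offsets the $+S'(p_f)$ rise in its committed output, and for a binding time $t'\ne t$, $Q_i^r$ is locally flat in the direction you perturb. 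But the obstacle you flag at the end is a genuine gap, and your ``reallocation within a fixed band'' device does not close it. The band $[0,Q_i(t)]$ constrains the total output of supplier~$i$, but nothing in it prevents the shape change of $\bar{S}$ from lowering an \emph{uncapped} supplier's contribution $\bar{S}(p_s;p_f)$ by more than that same supplier's $+S'(p_f)$ in the committed term; the bands do not couple across suppliers, so they do not sign the aggregate $\partial F/\partial p_f$. Closing this would require a monotone comparative‑statics result for the RT SFE of Propositions~\ref{RT:Existence}--\ref{RT:Uniqueness} with respect to the residual‑capacity vector (showing, e.g., that the unique revenue‑maximizing $\bar{S}$ is pointwise monotone in the $Q_i^r$'s), which you have not supplied. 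As written your proof is therefore incomplete. The shortest route to a complete proof is to adopt the paper's implicit convention and differentiate with the RT supply functions $\bar{S}_i^*$ held fixed, in which case your direct‑effect computation $\partial F/\partial p_f\ge S'(p_f)>0$ together with $\partial F/\partial p_s>0$ finishes; if you insist on the full ex‑post derivative including the $Q_i^r$ channel, you still owe the comparative‑statics lemma for $\bar{S}$.
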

\begin{proof}
% The RT market is active since $p_s^*\neq 0$ and
% $\mathcal{S}_r\neq\emptyset$.
 Let $D$ denote the realized demand. Then the ex post RT market clearing
 price $p_s^*$ satisfies the market clearing condition:
\begin{equation*}
\sum_{i=1}^{N_S}\min\{S_i(p_f),Q_i\}+\sum_{i\in\mathcal{S}_r}\bar{S}_i^*(p_s^*)+\bar{C}(p_s^*)
+ C(p_f) = D.
\end{equation*}
For any given $D$, $\{Q_i\}_{i=1}^{N_S}$, and $S_i$ for
$i\in\{1,...,N_S\}$, an increase in $p_f$ in the DA market leads to an
increase in DA supply. Since total demand is fixed, the RT supply must
decrease. Consequently, $p_s^*$ must decrease, implying
$\frac{\partial p_s^*}{\partial p_f} < 0$. Moreover, note that as the increase in the DA supply is
finite, $\frac{\partial p_s^*}{\partial p_f} \rightarrow -\infty$ can occur only if the aggregated RT
supply $\sum_{i\in\mathcal{S}}\bar{S}'_i(p_s^*)+\bar{C}'(p_s^*)\rightarrow
0$. However, this is impossible as $\bar{S}'_i(p_s^*)>0$ when $p_s^*>0$. 
% We have
% \begin{align*}
% (p_s^*)' &= \frac{\mathrm{d} p_s^*}{dp_f}\\
% &= \frac{\frac{\mathrm{d}(\sum_{i=1}^{N_S}S_i(p_f)+C(p_f))}{\mathrm{d}p_f}}{-\frac{\mathrm{d}(\sum_{i=1}^{N_S}\bar{S}_i^*(p_s^*)+\bar{C}(p_s^*))}{\mathrm{d}p_s^*}}\\
% &=-\frac{\sum_{i=1}^{N_S}S_i'(p_f)+C'(p_f)}{\sum_{i=1}^{N_S}(\bar{S}_i^*(p_s^*))'+\bar{C}'(p_s^*)}.
% \end{align*}
% The second equality is due to the fact that a supply increase in the DA market must be compensated by a supply decrease in the RT market (given the ex post residual demand $D_r > 0$ and price $p_s^* > 0$). Since $S_i'(p_f)>0$, $C'(p_f)\geq 0$, $(\bar{S}_i^*(p_s^*))'>0$, and $\bar{C}'(p_s^*)\geq 0$ for $p_f>0$ and $p_s^*>0$, we obtained the stated results.
\end{proof}

\DAExistence*
\begin{proof}
  The proof consists of two parts. We show the existence of the SFE in part
  (i), and then prove its uniqueness in part (ii).
\begin{enumerate}[(i)]
\item Let $h(x)$ the density function of the renewable capacity. Then the
  FOC (w.r.t. $p_f$) of the renewable supplier's payoff \eqref{DA:RS_Obj}
  is given by 
  \begin{eqnarray*}
  \lefteqn{(-S_{-i}'(p_f)-C'(p_f))p_f + S_i(p_f) +
    \mathbb{E}[p_s^*(p_s^*)'(\bar{S}_i^*(p_s^*))'] +
    \mathbb{E}[(p_s^*)'\bar{S}_i^*(p_s^*)]} \\ 
  && \mbox{} + (S_{-i}'(p_f)+C'(p_f))\int_{0}^{S_i(p_f)}p_s^*h(x)dx +
     \int_{0}^{S_i(p_f)}(x-S_i(p_f))(p_s^*)'h(x)dx = 0. 
  \end{eqnarray*}
  Using the DA market clearing condition \eqref{DA:MarketClearing} and the
  symmetry property \eqref{DA:Symmetry}, we can re-write the above FOC as
  \begin{eqnarray}
S_i'(p_f) & = & \frac{-\int_{0}^{S_i(p_f)}(x-S_i(p_f))(p_s^*)'h(x)dx-
                S_i(p_f)-\mathbb{E}[p_s^*(p_s^*)'(\bar{S}_i^*(p_s^*))'] -
                \mathbb{E}[(p_s^*)'\bar{S}_i^*(p_s^*)]}{(N_S-1)(-p_f +
                \int_{0}^{S_i(p_f)}p_s^*h(x)dx)} \notag\\
    && \mbox{}  - \frac{C'(p_f)}{(N_S-1)} \label{App:Prop3_ODE}
  \end{eqnarray}
  Meanwhile, using \eqref{DA:MarketClearing}, the LSE's payoff \eqref{DA:LSE_Obj} can be expressed as
\begin{equation*}
\bigg[\sum_{i\in\mathcal{S}}S_i(p_f)+C(p_f)\bigg]p_f+\mathbb{E}\bigg[\bigg(D- (S_i(p_f)+C(p_f))\bigg)^+p_s^*\bigg].
\end{equation*}
The corresponding FOC (w.r.t. $p_f$) is
\begin{align*}
&\bigg[\sum_{i\in\mathcal{S}}S_i(p_f^*)+C(p_f^*)\bigg]+\bigg[\sum_{i\in\mathcal{S}}S'_i(p_f^*)+C'(p_f^*)\bigg]p_f^* + \mathbb{E}\bigg[\bigg(-\sum_{i\in\mathcal{S}}S'_i(p_f^*)-C'(p_f^*)\bigg)p_s^*\bigg]\notag\\
&+\mathbb{E}\bigg[\bigg(\sum_{i\in\mathcal{S}}\bar{S}_i(p_s^*) +\sum_{i\in\mathcal{S}}(Q_i-S_i(p_f^*))^-+\bar{C}(p_s^*)-C(p_f^*)\bigg)^+(p_s^*)'\bigg] = 0,
\end{align*}
where the last term is obtained according to \eqref{RT:MarketClearing}.
%+\mathbb{E}\bigg[\bigg(\sum_{i\in\mathcal{S}}\bar{S}_i(p_s^*) +(Q_i-S_i(p_f^*))^-+\bar{C}(p_s^*)-C(p_f^*)\bigg)^+p_s^*\bigg]
We can rearrange the above FOC and obtain
\begin{align}
  & \sum_{i\in\mathcal{S}}S_i(p_f^*)+(p_f^*-\mathbb{E}[p_s^*])\sum_{i\in\mathcal{S}}S'_i(p_f^*)
  \notag \\
  &  \mbox{} + 
  \mathbb{E}\bigg[(p_s^*)'\sum_{i\in\mathcal{S}}\big(\bar{S}_i(p_s^*)
    +(Q_i-S_i(p_f^*))^-\big)\bigg|D>D_l\bigg]\mathbb{P}\bigg(D>D_l\bigg)\notag\\  
  & \mbox{} +
    C(p_f^*)+(p_f^*-\mathbb{E}[p_s^*])C'(p_f^*) +
    \mathbb{E}\bigg[(p_s^*)'(\bar{C}(p_s^*)-C(p_f^*))\bigg|D>D_l\bigg] \mathbb{P}\bigg(D>D_l\bigg)  
    = 0. \label{App:Prop3_LSE_FOC}  
\end{align} 
Using renewable suppliers' FOCs to simplify \eqref{App:Prop3_LSE_FOC}, we obtain
\begin{align}
  &\bigg(p_f^* - \mathbb{E}[p_s^*]\bigg)\sum_{i\in\mathcal{S}}S_i'(p_f^*)
    \notag \\
  & \mbox{} +
    \bigg(p_f^*-\mathbb{E}[p_s^*|Q_i<S_i(p_f^*)]\mathbb{P}(Q_i<S_i(p_f^*))\bigg)
    \bigg((N_S-1)\sum_{i\in\mathcal{S}}S_i'(p_f^*)+N_SC'(p_f^*)\bigg)\notag\\ 
  &\mbox{} +C(p_f^*) + \bigg(p_f^* -
    \mathbb{E}[p_s^*]\bigg)C'(p_f^*)+\mathbb{E}\bigg[(p_s^*)'(\bar{C}(p_s^*)
    - C(p_f^*))\bigg|D>D_l\bigg]\mathbb{P}\bigg(D>D_l\bigg)
    \notag\\ 
  & \mbox{} -
    \mathbb{E}[(p_s^*)'\sum_{i\in\mathcal{S}}\bar{S}'_i(p_s^*)]=0. \label{App:Prop3_ODE_2} 
\end{align}
Then an equilibrium supply function $S_i$ in the DA market should satisfy
both \eqref{App:Prop3_ODE} and \eqref{App:Prop3_ODE_2}. We prove the
existence of such supply function in the following two steps: \\
\noindent
\textbf{Step 1}: 
The proof in this step follows using a similar strategy as in Lemma EC. 1 in \cite{sunar2019strategic}. Suppose
  $(p_1,s_1)\in(0,\infty)\times (0,\infty)$ and
  $(p_2,s_2)\in(0,\infty)\times (0,\infty)$ are two points that satisfy
  \eqref{App:Prop3_ODE} and \eqref{App:Prop3_ODE_2}. We first show that
  there exists a unique trajectory that passes through each point for
  $p\in[p_i-\delta_i, p_i+\delta_i]$, where $\delta_i>0$ is a constant and
  $i\in\{1,2\}$. 

  Based on \eqref{App:Prop3_ODE}, we can define a mapping $f:
  (0,\infty)\times (0,\infty) \rightarrow\mathcal{R}^+$ as 
  \eq
  f(p,u) = \frac{1}{N_S-1}\bigg\{\frac{-\int_{0}^{u}(x-u)(p_s^*)'h(x)dx-
    u-\mathbb{E}[p_s^*(p_s^*)'(\bar{S}_i^*(p_s^*))'] -
    \mathbb{E}[(p_s^*)'\bar{S}_i^*(p_s^*)]}{-p +
    \int_{0}^{u}p_s^*h(x)dx} -C'(p)\bigg\}. 
  \en
  Then we have
  \begin{align*}
\frac{\partial f(p,u)}{\partial u} =\frac{1}{(N_S-1)(-p+\int_0^up_s^*h(x)dx)^2}\bigg[\bigg(\int_0^u(p_s^*)'h(x)dx-1\bigg)\bigg(-p+\int_0^up_s^*h(x)dx\bigg)\\
-(p^*_sh(u))\bigg(-\int_{0}^{u}(x-u)(p_s^*)'h(x)dx- u-\mathbb{E}[p_s^*(p_s^*)'(\bar{S}_i^*(p_s^*))'] - \mathbb{E}[(p_s^*)'\bar{S}_i^*(p_s^*)]\bigg)\bigg].
%\label{App:Prop3_Lipschitz}
\end{align*}
Since $\partial f (p,u)/\partial u < \infty$ for $p \neq \int_{0}^u p_s^*h(x)dx$, $f$ is Lipschitz continuous in $u$ on $[u-\hat{\delta}, u+\hat{\delta}]$ for some constant $\hat{\delta}>0$. Then according to the ODE theory, there exists a unique trajectory that satisfies $S_i(p_1) = s_1$ and \eqref{App:Prop3_ODE} for $p\in[p_1-\delta_1, p_1+\delta_1]$, where $\delta_1>0$ is a constant. If $p = \int_{0}^u p_s^*h(x)dx$, then following a similar argument of Lemma EC. 1 in \cite{sunar2019strategic}, we can show that $1/f(p,u)$ is Lipschitz continuous in $p$. Therefore, there also exists a unique trajectory that passes through $(p_1, s_1)$. 

Similarly, based on \eqref{App:Prop3_ODE_2}, we have
\begin{align*}
S'_i(p_f) &= \frac{1}{N_S(p_f - \mathbb{E}[p_s^*])+(N_S-1)(p_f - \mathbb{E}[p_s^*|Q_i<S_i(p_f)]\mathbb{P}(Q_i<S_i(p_f)))}\bigg\{N_S\mathbb{E}[p_s^*(p_s^*)'(\bar{S}_i^*(p_s^*))']\\
&-N_S\bigg(p_f-\mathbb{E}[p_s^*|Q_i<S_i(p_f)]\mathbb{P}(Q_i<S_i(p_f))\bigg)C'(p_f)-C(p_f)-(p_f-\mathbb{E}[p_s^*])C'(p_f)\\
&-\mathbb{E}\big[(p_s^*)'\big(\bar{C}(p_s^*) -C(p_f)\big)\big]\bigg\}. 
\end{align*}
We define a mapping $g: (0,\infty)\times (0,\infty) \rightarrow\mathcal{R}^+$ as
\begin{align*}
g(p,u) &= \frac{1}{N_S(p - \mathbb{E}[p_s^*])+(N_S-1)(p - \mathbb{E}[p_s^*|Q_i<u]\mathbb{P}(Q_i<u))}\bigg\{N_S\mathbb{E}[p_s^*(p_s^*)'(\bar{S}_i^*(p_s^*))']\\
&-N_S\bigg(p-\mathbb{E}[p_s^*|Q_i<u]\mathbb{P}(Q_i<u)\bigg)C'(p)-C(p)-(p-\mathbb{E}[p_s^*])C'(p)\\
&-\mathbb{E}\big[(p_s^*)'\big(\bar{C}(p_s^*) -C(p)\big)\big]\bigg\}.
\end{align*}
The partial derivative $\partial g (p,u)/\partial u$ is
\begin{align*}
\frac{\partial g (p,u)}{\partial u} &= \frac{1}{\bigg(N_S(p - \mathbb{E}[p_s^*])+(N_S-1)(p - \mathbb{E}[p_s^*|Q_i<u]\mathbb{P}(Q_i<u))\bigg)^2}\bigg\{\\
&\bigg(N_S(p_s^*)h(s)C'(p)\bigg)\bigg(N_S(p - \mathbb{E}[p_s^*])+(N_S-1)(p - \mathbb{E}[p_s^*|Q_i<u]\mathbb{P}(Q_i<u))\bigg)\\ 
&+ \bigg(N_S\mathbb{E}[p_s^*(p_s^*)'(\bar{S}_i^*(p_s^*))'] -N_S(p-\mathbb{E}[p_s^*|Q_i<u]\mathbb{P}(Q_i<u))C'(p)-C(p)-(p-\mathbb{E}[p_s^*])C'(p)\\
&-\mathbb{E}\big[(p_s^*)'\big(\bar{C}(p_s^*) -C(p)\big)\big]\bigg)\bigg((N_S-1)p_s^*h(s)\bigg)
\bigg\}.
\end{align*}
Since $\partial g (p,u)/\partial u < \infty$, $g(p,u)$ is also Lipschitz continuous in $u$, if $N_S(p - \mathbb{E}[p_s^*])+(N_S-1)(p - \mathbb{E}[p_s^*|Q_i<u]\mathbb{P}(Q_i<u)) \neq 0$; otherwise, $1/g(p,s)$ is Lipschitz continuous in $p$. Therefore, there exists a unique trajectory that passes through the point $(p_2,s_2)$ for $p\in[p_2-\delta_2, p_2+\delta_2]$, where $\delta_2 >0$ is some constant. 

%     \item For any $S>0$, there exists at least one price $p_s$ s.t. ODE1=ODE2 at $p=p_s$. Therefore, there exists a common solution that satisfies both ODEs.
%     \item ODE 1 and 2 pass through (0,0). In addition, the common solution satisfies $S'_i>0$ for $p\in(0,\bar{p})$ if $S'_{i}(\bar{p})>0$. 
% \end{itemize}

\noindent
\textbf{Step 2:} In this step, we show that, there exists points $(p,s)$ where the two trajectories described by \eqref{App:Prop3_ODE} and \eqref{App:Prop3_ODE_2} intersect. Furthermore, the derivatives of these two trajectories at the intersection are identical. 
%Therefore, there exists a common solution that satisfies both ODEs.

If we multiply \eqref{App:Prop3_ODE_2} with $(N_S-1)(p_f -\mathbb{E}[p_s^*|Q_i<S_i(p_f)]\mathbb{P}(Q_i<S_i(p_f))$ and \eqref{App:Prop3_ODE} with $(p_f -\mathbb{E}[p_s^*])$ and add them together, we obtain
\begin{align}
&(N_S-1)(p_f - \mathbb{E}[p_s^*|Q_i<S_i(p_f)]\mathbb{P}(Q_i<S_i(p_f)))\bigg(C(p_f) + (p_f -\mathbb{E}[p_s^*])C'(p_f) + \mathbb{E}[(\bar{C}(p_s^*)-C(p_f))(p_s^*)']\bigg) \notag\\
&+\underbrace{(p_f -\mathbb{E}[p_s^*])\bigg[(-p_f + \mathbb{E}[p_s^*|Q_i<S_i(p_f)]\mathbb{P}(Q_i<S_i(p_f)))N_SC'(p_f) + \mathbb{E}[(p_s^*)'(p_s^*)\sum_{i=1}(\bar{S}_i(p_s^*))']\bigg]}_{(a)} \notag\\ 
&+\bigg((N_S-1)(p_f -\mathbb{E}[p_s^*|Q_i<S_i(p_f)]\mathbb{P}(Q_i<S_i(p_f)))+ p_f - \mathbb{E}[p_s^*]\bigg)\bigg(\sum_{i=1}^{N_S}S_i(p_f) + \mathbb{E}[(p_s^*)'\sum_{i=1}^{N_S}\bar{S}_i(p_s^*)\notag\\ 
&+(Q_i-S_i(p_f))^-(p_s^*)']\bigg) = 0.  \label{App:Prop3_Cond}
\end{align}
By construction, any solution to \eqref{App:Prop3_Cond} satisfies both \eqref{App:Prop3_ODE} and \eqref{App:Prop3_ODE_2}. 
Then pick any $u>0$. Consider two values that are below and above $u$, respectively, i.e., $\underline{u}<u<\bar{u}$. Denote the respective prices corresponding to these two supplies as $\underline{p}_f$ and $\bar{p}_f$. For $\underline{u}$, let $\underline{p}_f \rightarrow 0^+$ such that $C'(\underline{p}_f) =C(\underline{p}_f)=0$ and
\begin{equation*}
\underline{p}_f -\mathbb{E}[p_s^*|Q_i<\underline{u}]\mathbb{P}(Q_i<\underline{s}) + \underline{p}_f -\mathbb{E}[p_s^*] <0.
\end{equation*}
Then if we let $\underline{u}\rightarrow u^-$, we have $(p_s^*)'\rightarrow 0$ according to Lemma \ref{App:p_s_derivative}. With these conditions, \eqref{App:Prop3_Cond} reduces to
\begin{equation*}
N_S((N_S-1)(\underline{p}_f - \mathbb{E}[p_s^*|Q_i<\underline{u}]\mathbb{P}(Q_i<\underline{u}))+\underline{p}_f -\mathbb{E}[p_s^*])\underline{u}<0.
\end{equation*}
%We have $S'(p_f)\rightarrow +\infty$ and $(p_s^*)'\rightarrow -\infty$ according to Lemma \ref{App:p_s_derivative}. Using this result and $\lim_{p_f\rightarrow 0^+}C(p_f) = 0$ and $\lim_{p_f\rightarrow 0^+}C'(p_f) = 0$ to simplify the left hand side of the above formula, we obtain
%\begin{align*}
%&(N_S-1)(p_f - \mathbb{E}[p_s^*|Q_i<S_i(p_f)]\mathbb{P}(Q_i<S_i(p_f))) \mathbb{E}[(\bar{C}(p_s^*)-C(p_f))(p_s^*)'] \\
%&+\bigg((N_S-1)(p_f -\mathbb{E}[p_s^*|Q_i<S_i(p_f)]\mathbb{P}(Q_i<S_i(p_f)))+ p_f - \mathbb{E}[p_s^*]\bigg)\bigg(\mathbb{E}[(p_s^*)'\sum_{i=1}^{N_S}\bar{S}_i(p_s^*) +(Q_i-S_i(p_f))^-(p_s^*)']\bigg)\\
%&+(p_f -\mathbb{E}[p_s^*])\mathbb{E}[(p_s^*)'(p_s^*)\sum_{i=1}(\bar{S}_i^*(p_s^*))']
%\end{align*}
%Note that $p_f < \mathbb{E}[p_s^*|Q_i<S_i(p_f)]\mathbb{P}(Q_i<S_i(p_f)) < \mathbb{E}[p_s^*]$ and that the term $\sum_{i=1}^{N_S}\bar{S}_i(p_s^*)$ in the second line dominates $(Q_i-S_i(p_f))^-$ when $S_i(p_f) = s$ is small. Besides, $(p_s^*)'<0$ according to  Lemma \ref{App:p_s_derivative}. 
Therefore, we have a negative LHS of \eqref{App:Prop3_Cond} in such case. We next show that the LHS can also be positive. For $\bar{u}$, we can choose $\bar{p}_f$ sufficiently close to $\mathbb{E}[p_s^*]$ such that 
\begin{equation*}
\bar{p}_f -\mathbb{E}[p_s^*|Q_i<\bar{u}]\mathbb{P}(Q_i<\bar{u}) + \bar{p}_f -\mathbb{E}[p_s^*] > 0.  
\end{equation*}
Let $\bar{u}\rightarrow u^+$, we have $(p_s^*)'\rightarrow 0$ according to Lemma \ref{App:p_s_derivative}. Using these results, we can check that the LHS of \eqref{App:Prop3_Cond} becomes positive because term $(a)$ in \eqref{App:Prop3_Cond} is zero and all other terms are positive.

Combining results for $\underline{u}$ and $\bar{u}$, there must exist a price $p\in(\underline{p}_f, \bar{p}_f)$ that makes \eqref{App:Prop3_Cond} hold. In other words, there exists a point that satisfies both \eqref{App:Prop3_ODE} and \eqref{App:Prop3_ODE_2}. Moreover, $S'$ implied by these two ODEs at this point is identical by construction. Together with the results proved in Step 1, this proves the existence of a common solution to both \eqref{App:Prop3_ODE} and \eqref{App:Prop3_ODE_2}. 

In addition, $S'>0$ as $\underline{p}_f\rightarrow 0^+$ and $\underline{u}\rightarrow u^-$ and $\bar{p}_f\rightarrow \mathbb{E}[p_s^*]^-$ and $\bar{u}\rightarrow u^+$.
% Moreover, using L'Hopital's rule, we can check
% \begin{equation*}
% \lim_{p_f \rightarrow 0^+}\frac{\mathbb{E}[p_s^*|Q_i<S_i(p_f)]\mathbb{P}(Q_i<S_i(p_f))}{p_f} = 0.
% \end{equation*}
% This suggests that the denominator approaches zero faster than the numerator. Therefore, there exists a $\bar{p}>0$ such that if $p_f < \bar{p}$, the conditions $\mathbb{E}[p_s^*|Q_i<S_i(p_f)]\mathbb{P}(Q_i<S_i(p_f)) < p_f$ and $N_S(p_f - \mathbb{E}[p_s^*])+(N_S-1)(p_f - \mathbb{E}[p_s^*|Q_i<S_i(p_f)]\mathbb{P}(Q_i<S_i(p_f))) <0$ always hold. In other words, we have $S'(p_f) >0$ for any $p_f\in(0,\bar{p})$. 
Also note that $S(0)=0$ and $S'(0)=0$ satisfy both \eqref{App:Prop3_ODE}
and \eqref{App:Prop3_ODE_2}.

\item The proof in this part follows a similar spirit to Proposition 3 in \cite{sunar2019strategic}. To show the uniqueness of the SFE, we solve the ODE \eqref{App:Prop3_ODE} for specific values of $p_f$. Note that $\mathbb{E}[p_s^*|Q_i<S_i(p_f)]\mathbb{P}(Q_i<S_i(p_f))$ and $\mathbb{E}[(Q_i-S_i(p_f))^-(p_s^*)']$ are high order infinitesimal of $p_f$, i.e., we have 
\[\lim_{p_f\rightarrow 0^+}\frac{\mathbb{E}[p_s^*|Q_i<S_i(p_f)]\mathbb{P}(Q_i<S_i(p_f))}{p_f}=0
\]
and
\[
\lim_{p_f\rightarrow 0^+}\frac{\mathbb{E}[(Q_i-S_i(p_f))^-(p_s^*)']}{p_f}= 0.
\]
Under these conditions, \eqref{App:Prop3_ODE} reduces to
\begin{align}
S_i'(p_f) + \frac{1}{(N_S-1)p_f}S_i(p_f) = \underbrace{\frac{1}{N_S(N_S-1)p_f}(-\mathbb{E}[(p_s^*)'\sum_{i\in\mathcal{S}}\bar{S}_i(p_s^*)] - \mathbb{E}[(p_s^*)'p_s^*\sum_{i\in\mathcal{S}}\bar{S}'_i(p_s^*)]) -\frac{1}{N_S-1}C'(p_f)}_{G(p_f;p_s^*,\bar{S}_i^*)}\notag.
\end{align}
The solution to the above linear ODE is
\begin{equation}
\label{App:Sol_DA}
S_i(p_f)= (p_f)^{\frac{1}{N_S-1}}\bigg(c_s -\int_0^{p_f} (x)^{-\frac{1}{N_S-1}}G(x)dx\bigg),
\end{equation}
where $c_s\in\mathcal{R}$ is a constant. Based on the solution, the market clearing price $p_f^*$ depends on $c_s$. Let $p_f^*(c_s)$ denote the DA market clearing price as a function of $c_s$. 
For a given demand $D_l$, the market clearing condition \eqref{RT:MarketClearing} implies that a steeper $S_i$ results in a smaller market clearing price, and therefore, $p^\ast_s(c_s)$ is decreasing in $c_s$.

Next, consider the payoff of the $i$-th renewable supplier as a function of the constant $c_s$
\begin{align*} 
\Pi^{\mbox{DA}}_i(c_s) &=\Big(D_{l}-C(p_{f}^*(c_s))-S_{-i}(p_{f}^*(c_s);c_s)\Big)p_{f}^*(c_s)\\
 & + \mathbb{E}\Big[\bar{S}_{i}^*(p_s^*(p_f^*(c_s)))p_s^*(p_f^*(c_s))\Big] + \mathbb{E}\Big[\big(Q-S_i(p_f^*(c_s))\big)^-p_s^*(p_f^*(c_s))\Big].
 \end{align*}
Taking derivative w.r.t. $c_s$, we obtain
{\allowdisplaybreaks
\begin{align*}
\frac{\mathrm{d}\Pi^{\mbox{DA}}_i(c_s)}{\mathrm{d}c_s} =
 & \ \Big(D_{l}-C(p_{f}^*(c_s))-S_{-i}(p_{f}^*(c_s);c_s)\Big)\frac{\mathrm{d}p_{f}^*(c_s)}{\mathrm{d}c_s} - p_f^*(c_s)\Big(C'(p_f^*(c_s)) + S_{-i}'(p_f^*(c_s);c_s)\Big)\frac{\mathrm{d}p_{f}^*(c_s)}{\mathrm{d}c_s}\\
&\mbox{}- p_f^*(\bar{c}_1) \frac{\partial S^*_{-i}(p_f^*(c_s);c_s)}{\partial c_s} - \mathbb{E} \bigg[
     \bigg(\bar{S}_i^*(p_s^*(p_f^*(c_s)))\bigg)'\Big(p_s^*(p_f^*(c_s))\Big)'\frac{\mathrm{d}p_{f}^*(c_s)}{\mathrm{d}c_s}p_s^*(p_f^*(c_s))\bigg]\\
   & \mbox{} + \mathbb{E} \bigg[\bar{S}_i^*(p_s^*(p_f^*(c_s)))\Big(p_s^*(p_f^*(c_s))\Big)'\frac{\mathrm{d}p_{f}^*(c_s)}{\mathrm{d}c_s}\bigg]\\
     &\mbox{} +\mathbb{E}[Q_i\big(p_s^*(p_f^*(c_s))\big)'|Q_i<D_l^r(p_f^*)]\mathbb{P}(Q_i<D_l^r(p_f^*))\frac{\mathrm{d}p_{f}^*(c_s)}{\mathrm{d}c_s}
\end{align*}
Rearranging, we have
\begin{align*}
\frac{\mathrm{d}\Pi^{\mbox{DA}}_i(c_s)}{\mathrm{d}c_s} =& \ \frac{\mathrm{d}p_{f}^*(c_s)}{\mathrm{d}c_s}\bigg\{\Big(D_{l}-C(p_{f}^*(c_s))-S_{-i}(p_{f}^*(c_s);c_s)\Big)\\
   & \mbox{} - \Big(C'(p_f^*(c_s)) + S_{-i}'(p_f^*(c_s);c_s)\Big)p_f^*(c_s)\\
   & \mbox{} + \mathbb{E} \bigg[\bigg(\bar{S}_i^*(p_s^*(p_f^*(c_s)))\bigg)'\Big(p_s^*(p_f^*(c_s))\Big)'p_s^*(p_f^*(c_s))\bigg]\\
   & \mbox{} + \mathbb{E} \bigg[\bar{S}_{i}^*\Big(p_s^*(p_f^*(c_s))\Big)
     \Big(p_s^*(p_f^*(c_s))\Big)'\bigg] \\
     &+\mathbb{E}[Q_i\big(p_s^*(p_f^*(c_s))\big)'|Q_i<D_l^r(p_f^*)]\mathbb{P}(Q_i<D_l^r(p_f^*))\bigg\}\\
     & -  p_f^*(c_s)\sum_{j\neq i}\frac{\partial S^*_{j}(p_f^*(c_s);c_s)}{\partial c_s},
\end{align*}
}
 where we used the notation $\bar{S}_{-i}(p_s^*) = \sum_{j\in\mathcal{S}\setminus\{i\}}S_j(p_s^*)$ given in Eq.~\eqref{eq:supplmini}. 
% % \\
% % & \mbox{} + \mathbb{E} \Big[ \Big()(p_f^*(c_s))'(p_f^*)'(c_s)\]
% % - (\bar{S}_{-i}^*(p_s^*(p_f^*(c_s))))')( p_s^*(p_f^*(c_s)))'(p_f^*(c_s))'p_s^*(p_f^*(c_s))\bigg]\bigg)\\
% % -(N_S-1)(p_f^*)^{\frac{N_S}{N_S-1}}.  In the above formula, $\frac{\mathrm{d}p_f^*(c_s)}{\mathrm{d}c_s}<0$ because $p_f^*(c_s)$ is decreasing in $c_s$ as stated above. 
The terms within the curly brackets $\{\cdot\}$ represent the FOC of the supplier's payoff (w.r.t. $p_f^*$) and therefore is zero.
Besides, from \eqref{App:Sol_DA}, we know that 
\[\frac{\partial S^*_{i}(p_f^*(c_s);c_s)}{\partial c_s}= (p_f^*(c_s))^{1/(N_S-1)}>0.
\]
Consequently, the above derivative $\frac{\mathrm{d}\Pi^{\text{ DA}}_i(c_s)}{\mathrm{d}c_s}$ is always negative, indicating that the payoff is decreasing in $c_s$. Thus, the renewable supplier would choose the smallest $c_s$ that makes the supply function increasing. By following a similar argument in Proposition \ref{RT:Uniqueness}, we conclude that the constant $c_s$ is uniquely determined.
%\hl{The previous statement is not clear to me -- there are 4 terms in the derivative. Check to make sure I wrote the expression correctly. A general advice -- write things out explicitly so we can make sure that the algebra is correct.}
\end{enumerate}
Combining (i) and (ii), we obtain the stated results.
\end{proof}
% For renewable suppliers, the proof is based on two observations. The first observation is that increasing the constant in the supply functions will decrease the renewable supplier's immediate payoff from the DA market. The proof for this observation is identical to that of Proposition \ref{RT:Existence}. Intuitively, large constants indicate steep supply functions, meaning that the supplier provides electricity at low prices. The second observation is that increasing $c_s$ decrease the DA market clearing prices and hence increases the quantity of electricity cleared in the DA market due to the decreasing nature of the total demand curve. Therefore, the (expected) residual demand in the RT market decreases and so does the supplier's payoff from the RT market. Thus, renewable suppliers are incentivized  to bid the smallest $c_s$ to keep the supply function increasing.
%For virtual traders, its trading quantity will reduce the price gap between the DA and BA periods. So the bidding function brings the largest profit when the same trading quantity changes the price gap as small as possible. In other words, the virtual trader prefers bidding functions that are less elastic in price. Therefore, its optimal decision is to choose the largest constant that keeps the bidding function decreasing as slow as possible.

%\paragraph{Proof of Proposition \ref{DA:PriceGap_Vanilla}} 
\DAPriceGapVanilla*
\begin{proof}
Let $q(t)$ denote the LSE's DA demand bid  for $t\in\mathcal{T}$. Since the LSE
is the sole strategic player in the considered case, its payoff is 
\begin{equation}
C^{-1}\big(q(t)\big)q(t) + \mathbb{E}\Big[\big(D(t)-q(t)\big)^{+}\bar{C}^{-1}(D(t))\Big],
\end{equation}
where the first term is the immediate cost in the DA market and second
one is the expected cost in the RT market. The gradient w.r.t. $q(t)$ is
given by
\[
  C^{-1}(q(t)) + \big(C^{-1}(q(t))\big)'q(t)
  -\mathbb{E}\big[\bar{C}^{-1}(D(t))\big|D(t)>q(t)\big]\mathbb{P}(D(t)>q(t)).
\]
At $q(t)=0$, the % left hand side of the above FOC
gradient 
is % always
strictly 
negative because the conditional expectation and the probability are
positive, and % the first two terms are zero due to 
$C^{-1}(0)=0$. % This
% negative value indicates that increasing the
Thus, it follows that 
% DA demand from zero decreases
% the LSE's total cost. Therefore, 
the optimal demand bid $q^*(t)$ is
strictly positive.

Moreover, since the market participants only include the LSE and the
conventional supplier, the DA market clearing price  $p_f^*(t) =
C^{-1}(q^*(t))$.  % is essentially the DA market
% clearing price due to the DA market clearing condition $C(p_f^*(t)) =
% q(t)$. 
Similarly,
\begin{equation*}
  \mathbb{E}[p_s^*(t)] = 
\mathbb{E}\big[\bar{C}^{-1}(D(t))\big|D(t)>q(t)\big]\mathbb{P}(D(t)>q(t)), 
\end{equation*}
since 
% equals the expected RT market price because $\bar{C}(p_s^*(t)) = D(t)$ when $D(t)>q(t)$ according to \eqref{RT:ConventionalSupply} and 
$p_s^*(t)=0$ when $D(t)\leq q(t)$.
%\begin{equation*}
%\bar{C}(p_s(t)) = D(t),
%\end{equation*}
%RT:ConventionalSupply
%$C^{-1}(q(t))$ and $\bar{C}^{-1}(D(t))$ are the market clearing prices in these two markets, respectively. 
These together imply that the FOC is equivalent to
\[
  p_f^*(t) + \big(C^{-1}(q^*(t))\big)'q^*(t) -\mathbb{E}\big[p_s^*(t)\big]
  =0.
\]
%\hl{Is it possible that the optimal value $q^\ast$ is zero. In that case, the FOC conditions will involve a Lagrange parameter.}
% Note that
Since $C(\cdot)$ is strictly increasing, it follows that $\big(C^{-1}(q^*(t))\big)'q^*(t) >0$, % for $q(t)>0$, because. It
% follows that
and therefore, 
$p_f^*(t)<\mathbb{E}[p_s^*(t)]$.
\end{proof}
We need the following result to 
prove Proposition~\ref{DA:PriceAlignment_Vanilla}, 

\begin{lemma}
\label{DA:BoundedVB}
% When the number of virtual traders increases, the
In a two settlement electricity market that only includes the LSE and the conventional supplier, virtual DEC traders are incentivized to join the market. Furthermore, the total virtual load in the DA market is always finite even as the number of virtual bidders goes to infinity, i.e.,
\begin{equation*}
\lim_{N_V\rightarrow+\infty}\sum_{v=1}^{N_V}B_v(p_f^*) <\infty.
\end{equation*} 
\end{lemma}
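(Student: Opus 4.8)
The plan is to derive the entry incentive directly from Proposition~\ref{DA:PriceGap_Vanilla}, and to obtain boundedness from a ``self-limiting'' mechanism: flooding the day-ahead market with virtual load drives the day-ahead price so high that the real-time market becomes surely inactive, which destroys the price gap and hence the profitability of DEC bids. For the entry claim it suffices to show that the profile in which every virtual trader submits the zero bid is not a day-ahead equilibrium. Starting from the market of Proposition~\ref{DA:PriceGap_Vanilla} (only the LSE and the conventional supplier), that proposition gives $p_f^\ast(t)<\mathbb{E}[p_s^\ast(t)]$, while market clearing gives $C(p_f^\ast(t))=D_l^\ast(t)$ with $p_s^\ast(t)=C^{-1}(D(t))$ when $D(t)>C(p_f^\ast(t))$ and $p_s^\ast(t)=0$ otherwise. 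If a single trader $v$ deviates to a strictly decreasing bid $B_v$ that is uniformly small, the time-$t$ clearing condition~\eqref{DA:MarketClearing} becomes $C(\widehat p_f(t))=D_l^\ast(t)+B_v(\widehat p_f(t))$; since $C$ is continuous and strictly increasing, $\widehat p_f(t)\to p_f^\ast(t)$ and $\mathbb{E}[\widehat p_s^\ast(t)]\to\mathbb{E}[p_s^\ast(t)]$ as $B_v\downarrow 0$, so for $B_v$ small enough the gap $\mathbb{E}[\widehat p_s^\ast(t)]-\widehat p_f(t)$ is still strictly positive and the trader's payoff~\eqref{DA:VB_Obj} is strictly positive, beating the payoff $0$ of abstaining. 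Hence DEC traders are incentivized to participate.

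For boundedness, fix $N_V$, let $(D_l^\ast,\mathbb{B}^\ast)$ be any day-ahead equilibrium, and set $L(t):=\sum_{v=1}^{N_V}B_v^\ast(p_f^\ast(t))$. I would prove that $L(t)<\overline D$ for every $t\in\mathcal{T}$, which immediately yields $\lim_{N_V\to\infty}\sum_{v=1}^{N_V}B_v(p_f^\ast)\le\overline D<\infty$. Suppose instead $L(t_0)\ge\overline D$ for some $t_0$. By~\eqref{DA:MarketClearing} with no renewable supply, $C(p_f^\ast(t_0))=D_l^\ast(t_0)+L(t_0)\ge\overline D\ge D(t_0)$ almost surely, so the residual demand $D_r(t_0)=D(t_0)-C(p_f^\ast(t_0))\le 0$ almost surely; the real-time market at $t_0$ is then in Case~1, is inactive, and $p_s^\ast(t_0)=0$ almost surely, so $\mathbb{E}[p_s^\ast(t_0)]=0$. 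Moreover $C(p_f^\ast(t_0))\ge\overline D>0$ forces $p_f^\ast(t_0)>\underline p_c\ge 0$. Since $L(t_0)>0$, some trader $v$ has $B_v^\ast(p_f^\ast(t_0))>0$, and its time-$t_0$ payoff~\eqref{DA:VB_Obj} equals $\bigl(0-p_f^\ast(t_0)\bigr)B_v^\ast(p_f^\ast(t_0))<0$. Deviating to the bid function obtained from $B_v^\ast$ by lowering it to $0$ on a small neighborhood of $p_f^\ast(t_0)$ (and of the nearby clearing prices, which carry comparable load and hence also nonpositive per-period payoff) raises the trader's payoff at $t_0$ to essentially $0$ while not decreasing it at any other delivery time, contradicting the equilibrium condition~\eqref{Def:RT_Equilibrium_3}. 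Hence no equilibrium has $L(t_0)\ge\overline D$.

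The delicate step is this last deviation: because a virtual trader commits to a single bid function over the whole horizon $\mathcal{T}$ rather than a separate quantity for each $t$, the downward adjustment must be confined to a neighborhood of the offending clearing price so that payoffs at other delivery times are unaffected --- which is legitimate precisely because prices at which the virtual load is near $\overline D$ already yield nonpositive period payoffs, so shaving the bid there can only help. Everything else reduces to market clearing, the Case~1 convention $p_s^\ast=0$, and Proposition~\ref{DA:PriceGap_Vanilla}. (A sharper statement --- that the limiting total load equals the unique $L_\infty$ solving $\mathbb{E}[p_s^\ast]=p_f^\ast$ --- would follow from the traders' symmetric first-order condition relating $\mathbb{E}[p_s^\ast]-p_f^\ast$ to $(L/N_V)$ times the sensitivity of the two prices to $L$, but that is the content of Proposition~\ref{DA:PriceAlignment_Vanilla} and is not needed here.)
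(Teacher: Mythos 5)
Your proof is correct and reaches the same numerical bound as the paper's, but the route for the boundedness half is genuinely different. The paper argues: (1) in any equilibrium one must have $p_f^\ast \leq \mathbb{E}[p_s^\ast]$ (justified informally by noting DEC bidders exit once the gap flips sign); (2) market clearing and $D_l \geq 0$ give $\sum_v B_v(p_f^\ast) \leq C(p_f^\ast)$; (3) monotonicity of $C$ then gives $\sum_v B_v(p_f^\ast) \leq C(\mathbb{E}[p_s^\ast])$, and $\mathbb{E}[p_s^\ast] \leq C^{-1}(\overline{D})$ because the RT residual demand is bounded. You instead bound the aggregate virtual load $L(t)$ directly by $\overline{D}$ via contradiction: if $L(t_0) \geq \overline{D}$ then $C(p_f^\ast(t_0)) \geq \overline{D} \geq D(t_0)$ almost surely, Case 1 obtains, $p_s^\ast(t_0) = 0$ a.s., and any DEC trader carrying positive load at $t_0$ earns a strictly negative period payoff, so it would deviate. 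Both yield $\sum_v B_v \leq \overline{D}$. Your version makes the equilibrium-deviation logic explicit where the paper relies on a tatonnement-flavored statement (``DEC bidders exit the market when...''), so it is arguably tighter as a proof of a statement about equilibria. One remark: the localization you flag as the ``delicate step'' is actually unnecessary under the paper's equilibrium definition. Condition (ii) of Definition~\ref{Def:DA_Equilibrium} requires $\Pi_t^{\mathrm{V}}(B_v^\ast;\cdot) \geq \Pi_t^{\mathrm{V}}(B_v;\cdot)$ to hold for \emph{each} $t$ separately, so comparing against the zero bid at $t_0$ alone already yields $\Pi_{t_0}^{\mathrm{V}}(B_v^\ast;\cdot) \geq 0$, contradicting the strict negativity you derived; you need not engineer a deviation that is payoff-neutral at other delivery times. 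Your more careful construction is harmless, and would be needed if the equilibrium notion were aggregate over $t$ rather than pointwise, but here it only lengthens the argument.
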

\begin{proof}
Proposition~\ref{DA:PriceGap_Vanilla} shows that
$p_f^*<\mathbb{E}[p_s^*]$ when only the conventional supplier and the LSE
are in the market. In this scenario, virtual DEC bidders can generate a positive
payoff by % virtually
purchasing electricity from the DA market, and
% virtually
selling the same amount to the RT market. Since the DEC bids
increase the total demand in the DA market, $p_f^*$ increases; and since
DEC bids reduce residual demand in the RT market, the $\mathbb{E}[p_s^*]$
increases.
% decrease, respectively. 
Throughout this process, the condition
$p_f^*\leq\mathbb{E}[p_s^*]$ always holds, becasue the DEC bidders exit the
market when $p_f^*>\mathbb{E}[p_s^*]$.\footnote{There are no INC bidders
  during such process, because their payoffs would be negative.} 

Suppose there are $N_V$ DEC bidders in the market and the condition
$p_f^*\leq\mathbb{E}[p_s^*]$ holds. The total virtual load cannot exceed
the DA total 
supply because the conventional supplier is the sole supplier:
\begin{equation*}
\sum_{v=1}^{N_V}B_v(p_f^*)\leq C(p_f^*).
\end{equation*}
It follows that
\begin{equation*}
%\sum_{v=1}^{N_V}B_v(p_f^*)\leq \sum_{i\in\mathcal{S}}S_i(p_f^*)+C(p_f^*)\leq \sum_{i\in\mathcal{S}}S_i(\mathbb{E}[p_s^*])+C(\mathbb{E}[p_s^*]),
\sum_{v=1}^{N_V}B_v(p_f^*)\leq C(p_f^*)\leq C(\mathbb{E}[p_s^*])<+\infty,
\end{equation*}
where the second inequality holds due to $p_f^*\leq\mathbb{E}[p_s^*]$ and the monotonicity of the supply functions. 
The last inequality is true because $\mathbb{E}[p_s^*]<+\infty$, given
that the residual demand $D_r\in [\underline{D}_r,\overline{D}_r]$ is
bounded and the conventional supply function is strictly increasing when
$p_s^*>p_f^*$. 
We obtain the stated result by letting $N_V\rightarrow +\infty$.\looseness=-1 
\end{proof}

%\paragraph{Proof of Proposition \ref{DA:PriceAlignment_Vanilla}}
\DAPriceAlignmentVanilla*
\begin{proof}
Using the market clearing condition \eqref{DA:MarketClearing} and \eqref{DA:VB_Obj}, the FOC of the $v$-th virtual trader's payoff is
\begin{align*}
\big(\mathbb{E}[\big(p_s^*\big)^{\prime}]-1\big)\big(C(p_f^*)-D_l-B_{-v}(p_f^*)\big)=\big(\mathbb{E}[p_s^*]-p_f^*\big)\big((B_{-v}'(p_f^*)-C'(p_f^*)\big). 
\end{align*}
Simplifying using the DA market clearing condition \eqref{DA:MarketClearing} yields:
\begin{equation*}
\big(\mathbb{E}[\big(p_s^{*}\big)']-1\big)B_v(p_f^*) = \big(\mathbb{E}[p_s^*]-p_f^*\big)\big(B_{-v}'(p_f^*)-C'(p_f^*)\big).
\end{equation*}
Summing over $v$ and taking the limit as $N_V\rightarrow\infty$ on both sides, we have
\begin{align*}
\lim_{N_V\rightarrow+\infty}\big(\mathbb{E}[\big(p_s^*\big)^{\prime}]-1\big)
\frac{1}{N_V}\sum_{v=1}^{N_V}B_v(p_f^*)
= \lim_{N_V\rightarrow+\infty}\big(\mathbb{E}[p_s^*]-p_f^*\big)\underbrace{\Big(\frac{N_V-1}{N_V}\sum_{v=1}^{N_V}B_{v}'(p_f^*)-C'(p_f^*)\Big)}_{(a)}.
%=& \limsup_{N_v\rightarrow+\infty}\bigg(\mathbb{E}[p_s^*(t)^{*}(p_f^*(t))]-p_f^*(t) \bigg)\limsup_{N_v\rightarrow+\infty}\bigg(\frac{N_v-1}{N_v}\sum_{v=1}^{N_v}B_{v}'(p_f)-\sum_{i=1}^{N_S}S_i'(p_f)-C'(p_f)\bigg)
\end{align*}
By Lemma \ref{App:p_s_derivative} and \ref{DA:BoundedVB},
$\mathbb{E}[(p_s^*)']$ and
$\lim_{N_V\rightarrow\infty}\sum_{v=1}^{N_V}B_v(p_f^*)$ are bounded, so
the left hand side approaches zero as $1/N_V\rightarrow 0$. On the right hand side, the parenthetical term $(a)$ can not be zero because $\frac{N_V-1}{N_V}\sum_{v=1}^{N_V}B_{v}'(p_f^*)\leq 0$ and $-C'(p_f^*)<0$. Thus, % we conclude with 
we have
$
p_f^* = \mathbb{E}[p_s^*].
$
in the limit
as $N_V\rightarrow\infty$.
\end{proof}

\DAPriceGap*
\begin{proof}
\label{App:Prop4_PriceGap} 
Using the market clearing condition \eqref{DA:MarketClearing}, the LSE's payoff \eqref{DA:LSE_Obj} can be rewritten as:
\begin{equation*}
\bigg[\sum_{i\in\mathcal{S}}S_i(p_f^*)+C(p_f^*)\bigg]p_f^*+\mathbb{E}\bigg[\bigg(D- (S_i(p_f^*)+C(p_f^*))\bigg)^+p_s^*\bigg].
\end{equation*}
The FOC with respect to $p_f^*$ is
\begin{align*}
&\bigg[\sum_{i\in\mathcal{S}}S_i(p_f^*)+C(p_f^*)\bigg]+\bigg[\sum_{i\in\mathcal{S}}S'_i(p_f^*)+C'(p_f^*)\bigg]p_f^*\notag\\
&+\mathbb{E}\bigg[\bigg(-\sum_{i\in\mathcal{S}}S'_i(p_f^*)-C'(p_f^*)\bigg)p_s^*\bigg|D>D_l\bigg]\mathbb{P}\bigg(D>D_l\bigg)\notag\\
&+\mathbb{E}\bigg[\bigg(\sum_{i\in\mathcal{S}}\bar{S}_i(p_s^*) +\sum_{i\in\mathcal{S}}(Q_i-S_i(p_f^*))^-+\bar{C}(p_s^*)-C(p_f^*)\bigg)^+p_s^*\bigg] = 0,
\end{align*}
where the last term follows from \eqref{RT:MarketClearing}.
%+\mathbb{E}\bigg[\bigg(\sum_{i\in\mathcal{S}}\bar{S}_i(p_s^*) +(Q_i-S_i(p_f^*))^-+\bar{C}(p_s^*)-C(p_f^*)\bigg)^+p_s^*\bigg]
Rearranging the above FOC, we obtain
\begin{align}
&\sum_{i\in\mathcal{S}}S_i(p_f^*)+(p_f^*-\mathbb{E}[p_s^*])\sum_{i\in\mathcal{S}}S'_i(p_f^*) + \mathbb{E}\bigg[(p_s^*)'\sum_{i\in\mathcal{S}}\big(\bar{S}_i(p_s^*)+(Q_i-S_i(p_f^*))^-\big)\bigg|D>D_l\bigg]\mathbb{P}\bigg(D>D_l\bigg)\notag\\ 
&+C(p_f^*)+(p_f^*-\mathbb{E}[p_s^*])C'(p_f^*)+\mathbb{E}\bigg[(p_s^*)'(\bar{C}(p_s^*)-C(p_f^*))\bigg|D>D_l\bigg]\mathbb{P}\bigg(D>D_l\bigg) = 0. \label{App:Prop6_LSE_FOC}
\end{align}
Meanwhile, the FOC of renewable suppliers' payoff \eqref{DA:RS_Obj} is
\begin{align*}
&S_i(p_f^*) + (-C'(p_f^*)-S_{-i}'(p_f^*))p_f^* + \mathbb{E}[(p_s^*)'p_s^*\bar{S}_i'(p_s^*)] + \mathbb{E}[(p_s^*)'\bar{S}_i(p_s^*)]\\
&+\mathbb{E}\bigg[(Q_i -D_l+C(p_f^*)+S_{-i}(p_f^*))(p_s^*)'\bigg|Q_i<S_i(p_f^*)\bigg]\mathbb{P}\bigg(Q_i<S_i(p_f^*)\bigg) \\
&+ \mathbb{E}\bigg[(S_{-i}'(p_f)+C'(p_f))p_s^*\bigg|Q_i<S_i(p_f^*)\bigg]\mathbb{P}\bigg(Q_i<S_i(p_f^*)\bigg)= 0 
\end{align*}
Summing over $i$ and rearranging give:
\begin{align*}
&\sum_{i\in\mathcal{S}}S_i(p_f^*) + \bigg(p_f^* - \mathbb{E}\big[p_s*|Q_i<S_i(p_f^*)\big]\mathbb{P}(Q_i<S_i(p_f^*))\bigg)\bigg((N_S-1)\sum_{i\in\mathcal{S}}S'_i(p_f^*) +N_SC'(p_f^*)\bigg)\\ 
& +\mathbb{E}[(p_s^*)'\sum_{i\in\mathcal{S}}\bar{S}_i(p_s^*)]+ \mathbb{E}[(p_s^*)'p_s^*\sum_{i\in\mathcal{S}}\bar{S}'_i(p_s^*)]+ \mathbb{E}\bigg[(p_s^*)'\sum_{i\in\mathcal{S}}(Q_i-S_i(p_f^*))^-\bigg|D>D_l\bigg]\mathbb{P}\bigg(D>D_l\bigg)\notag =0.
\end{align*}
Using the above suppliers' FOC to simplify \eqref{App:Prop6_LSE_FOC}, we obtain
\begin{align}
&\bigg(p_f^* - \mathbb{E}[p_s^*]\bigg)\sum_{i\in\mathcal{S}}S_i'(p_f^*) + \bigg(p_f^*-\mathbb{E}[p_s^*|Q_i<S_i(p_f^*)]\mathbb{P}(Q_i<S_i(p_f^*))\bigg)\bigg((N_S-1)\sum_{i\in\mathcal{S}}S_i'(p_f^*)+N_SC'(p_f^*)\bigg)\notag\\
&+C(p_f^*) + \bigg(p_f^* - \mathbb{E}[p_s^*]\bigg)C'(p_f^*)+\mathbb{E}\bigg[(p_s^*)'(\bar{C}(p_s^*)-C(p_f^*))\bigg|D>D_l\bigg]\mathbb{P}\bigg(D>D_l\bigg) - \mathbb{E}[(p_s^*)'\sum_{i\in\mathcal{S}}\bar{S}'_i(p_s^*)]=0.\label{App:Lemma5_Cond}
\end{align}
Recall that ${p^*_s}'$ is negative from Lemma~\ref{App:p_s_derivative} and $S_i(\cdot)$ is an increasing function for any $i$ by Assumption (see Section~\ref{sec:DAMarket}). If $p_f^*\geq \mathbb{E}[p_s^*]$, then all terms in \eqref{App:Lemma5_Cond} are non negative except:
\begin{equation}
\label{DA:CS_Cond}
C(p_f^*) + \bigg(p_f^* - \mathbb{E}[p_s^*]\bigg)C'(p_f^*)+\mathbb{E}\bigg[(p_s^*)'(\bar{C}(p_s^*)-C(p_f^*))\bigg|D>D_l\bigg]\mathbb{P}\bigg(D>D_l\bigg).
\end{equation}
Next, we will show that \eqref{DA:CS_Cond} is also strictly positive. Thus, if $p_f^* \geq \mathbb{E}[p_s^*]$, \eqref{App:Lemma5_Cond} can not be zero. This indicates that all equilibriums must make $p_f^* < \mathbb{E}[p_s^*]$.

To demonstrate that \eqref{DA:CS_Cond} is positive, it sufficies to show that increasing $p_f^*$ further increases the conventional supplier's payoff
\begin{equation*}
C(p_f^*)p_f^* +\mathbb{E}[(\bar{C}(p_s^*)-C(p_f^*))p_s^*].
\end{equation*}
This is true because increasing the equilibrium DA price $p_f^*$ will increase the procurement cost of the LSE and decrease the payoff of the renewable suppliers as they deviate from the equilibrium. Consequently, the total payoff of the conventional supplier increases, as the LSE's payment is allocated to these two suppliers. This implies that the first derivative of the conventional supplier's payoff at $p_f^*$ is always positive:
\begin{equation*}
C(p_f^*) + \bigg(p_f^* - \mathbb{E}[p_s^*]\bigg)C'(p_f^*)+\mathbb{E}\bigg[(p_s^*)'(\bar{C}(p_s^*)-C(p_f^*))\bigg|D>D_l\bigg]\mathbb{P}\bigg(D>D_l\bigg) + \mathbb{E}[\bar{C}'(p_s^*)(p_s^*)'p_s^*] \geq 0.
\end{equation*}
The last term in the above formula is negative due to $(p_s^*)'<0$. Thus, \eqref{DA:CS_Cond} is strictly positive.
\end{proof}
\DAPriceAlignmentOne*
\begin{proof}
The proof is similar to that of Proposition \ref{DA:PriceAlignment_Vanilla}. According to \eqref{DA:MarketClearing} and \eqref{DA:VB_Obj}, the FOC of the virtual trader's payoff w.r.t $p_f^*$ is
\begin{align*}
&\bigg(\mathbb{E}[\big(p_s^*\big)^{\prime}]-1\bigg)\bigg(\sum_{i\in\mathcal{S}}S_i(p_f^*)+C(p_f^*)-D_l-B_{-v}(p_f^*)\bigg)\notag\\
&\hspace{4cm}=\bigg(\mathbb{E}[p_s^*]-p_f^*\bigg)\bigg(B_{-v}'(p_f^*)-\sum_{i\in\mathcal{S}}S_i'(p_f^*)-C'(p_f^*)\bigg).
\end{align*}
Using the DA market clearing condition \eqref{DA:MarketClearing}, we can simplify it as
\begin{equation*}
\bigg(\mathbb{E}[\big(p_s^{*}\big)']-1\bigg)B_v(p_f^*) = \bigg(\mathbb{E}[p_s^*]-p_f^*\bigg)\bigg(B_{-v}'(p^*_f)-\sum_{i\in\mathcal{S}}S_i'(p^*_f)-C'(p^*_f)\bigg).
\end{equation*}
Summing over $v$ and taking the limit as $N_S\rightarrow\infty$ on both sides yield
\begin{align*}
&\lim_{N_V\rightarrow+\infty}\bigg(\mathbb{E}[\big(p_s^*\big)^{\prime}]-1\bigg)\frac{1}{N_V}\sum_{v=1}^{N_V}B_v(p_f^*)\\
=& \lim_{N_V\rightarrow+\infty}\bigg(\mathbb{E}[p_s^*]-p_f^*\bigg)\underbrace{\bigg(\frac{N_V-1}{N_V}\sum_{v=1}^{N_V}B_{v}'(p_f^*)-\sum_{i\in\mathcal{S}}S_i'(p_f^*)-C'(p_f^*)\bigg)}_{(a)}.
%=& \limsup_{N_v\rightarrow+\infty}\bigg(\mathbb{E}[p_s^*(t)^{*}(p_f^*(t))]-p_f^*(t) \bigg)\limsup_{N_V\rightarrow+\infty}\bigg(\frac{N_V-1}{N_V}\sum_{v=1}^{N_V}B_{v}'(p_f)-\sum_{i=1}^{N_S}S_i'(p_f)-C'(p_f)\bigg)
\end{align*}
As $N_V\rightarrow +\infty$, the left hand side is zero because $\mathbb{E}[(p_s^*)']$ and $\sum_{v=1}^{N_V}B_{v}(p_f^*)$ are bounded due to Lemma \ref{App:p_s_derivative} and \ref{DA:BoundedVB}. On the right hand side, the parenthetical term $(a)$ can not be zero because $-\frac{N_V-1}{N_V}\sum_{v=1}^{N_V}B_{v}'(p_f^*)\leq 0$ and $-\sum_{i\in\mathcal{S}}S_i'(p_f^*) -C'(p_f^*) < 0$ for $p_f^*>0$.\footnote{If $p_f^*>\underline{p}_c$, $-C'(p_f^*) < 0$; If $0\leq p_f^* \leq\underline{p}_c$, $-\sum_{i\in\mathcal{S}}S_i'(p_f^*)<0$ because no equilibrium supply function will reach the expected capacity limit on this price interval.} Thus, we conclude with 
\begin{equation*}
p_f^* = \mathbb{E}[p_s^*]
\end{equation*}
as $N_V\rightarrow+\infty$.
\end{proof}

\DAExistenceTwo*
\begin{proof}
Consider the payoff of the LSE
\begin{equation*}
\bigg[\sum_{i\in\mathcal{S}}S_i(p_f^*)+C(p_f^*)-\sum_{v=1}^{\infty}B_v(p_f^*)\bigg]p_f^*+\mathbb{E}\bigg[\bigg(D- (\sum_{i\in\mathcal{S}}S_i(p_f^*)+C(p_f^*)-\sum_{v=1}^{\infty}B_v(p_f^*))\bigg)^+p_s^*\bigg].
\end{equation*}
The FOC with respect to $p_f^*$ is
\begin{align*}
&\bigg[\sum_{i\in\mathcal{S}}S_i(p_f^*)+C(p_f^*)-\sum_{v=1}^{\infty}B_v(p_f^*)\bigg]+\bigg[\sum_{i\in\mathcal{S}}S'_i(p_f^*)+C'(p_f^*)-\sum_{v=1}^{\infty}B'_v(p_f^*)\bigg]p_f^*\notag\\
&+\mathbb{E}\bigg[\bigg(-\sum_{i\in\mathcal{S}}S'_i(p_f^*)-C'(p_f^*)+\sum_{v=1}^{\infty}B'_v(p_f^*)\bigg)p_s^*\bigg|D>D_l\bigg]\mathbb{P}\bigg(D>D_l\bigg)\\
&+\mathbb{E}\bigg[\bigg(D- (S_i(p_f^*)+C(p_f^*)-\sum_{v=1}^{\infty}B_v(p_f^*))\bigg)^+(p_s^*)'\bigg] = 0,
\end{align*}
Using the price alignment condition $p_f = \mathbb{E}[p_s^*]$ and rearranging, we obtain
\begin{equation*}
(1-\mathbb{E}[(p_s^*)'])\bigg(\sum_{i=1}^{N_S}S_i(p_f)+C(p_f)-\sum_{v=1}^{N_V}B_v(p_f)\bigg)+\mathbb{E}[D(p_s^*)']=0.
\end{equation*}
which is equivalent to 
\begin{equation*}
D_l = \frac{\mathbb{E}[(p_s^*)'D]}{\mathbb{E}[(p_s^*)'] - 1}.
\end{equation*}
according to the market clearing condition. Moreover, using the FOC of renewable supplier's payoff function and the price alignment condition, we obtain the following equation
\begin{align*}
&S_i(p_f) +\int_{0}^{S_i(p_f)}(x-S_i(p_f))(p_s^*)'h(x)dx \\
&+(p_f-\mathbb{E}[p_s^*|Q_i<S_i(p_f)]\mathbb{P}(Q_i<S_i(p_f)))(\sum_{v=1}^{\infty}B_v(p_f)-S_{-i}'(p_f)-C(p_f))\\
&+\mathbb{E}[(\bar{S}_i^*(p_s^*))'(p_s^*)'(p_s^*)+\bar{S}_i^*(p_s^*)(p_s^*)']=0.
\end{align*}
Rearranging, we obtain
\begin{align*}
S'_i(p_f) &= \frac{1}{N_S-1}\bigg[\frac{1}{p_f - \mathbb{E}[p_s^*|Q_i<S_i(p_f)]\mathbb{P}(Q_i<S_i(p_f))}\bigg(-S_i(p_f)-\int_{0}^{S_i(p_f)}(x-S_i(p_f)(p_s^*)'h(x)dx\\
&-\mathbb{E}[(\bar{S}_i^*(p_s^*))'(p_s^*)'(p_s^*)+\bar{S}_i^*(p_s^*)(p_s^*)'])\bigg)-\sum_{v=1}^{\infty}B'_v(p_f)-C'(p_f)\bigg].
\end{align*}
Note that since $ - \sum_{v=1}^{\infty}B'_v(p_f)\geq 0$ and a ``steep" supply function will decrease the renewable supplier's payoff, as shown in Proposition \ref{RT:Uniqueness}, we have $\sum_{v=1}^{\infty}B'_v(p_f) = 0$ at equilibrium.
%Based on the virtual trader's bidding function, we know
%\begin{equation*}
%\sum_{v=1}^{N_V}B'_v(p_f) = \sum_{i=1}^{N_S}S'_i(p_f)+C'(p_f) + \big(\frac{\mathbb{E}[(p_s^*)'D]}%{1-\mathbb{E}[(p_s^*)']}\big)'.
%\end{equation*}
Plugging it into the above ODE, we obtain
\begin{align*}
S'_i(p_f) &= \frac{1}{N_S-1}\bigg[\frac{1}{p_f - \mathbb{E}[p_s^*|Q_i<S_i(p_f)]\mathbb{P}(Q_i<S_i(p_f))}\bigg(-S_i(p_f)-\int_{0}^{S_i(p_f)}(x-S_i(p_f)(p_s^*)'h(x)dx\\
&-\mathbb{E}[(\bar{S}_i^*(p_s^*))'(p_s^*)'(p_s^*)+\bar{S}_i^*(p_s^*)(p_s^*)'])\bigg)-C'(p_f)\bigg].
\end{align*}
This is identical to \eqref{App:Prop3_ODE}. Therefore, the stated results hold.
\end{proof}

%\paragraph{Proof of Proposition \ref{DA:Uniqueness_Demand}}
\DAUniquenessDemand*
\begin{proof}
Suppose $D_l(t)$ and $D_l^*(t)$ are two optimal DA demands that yield the same cost for the LSE. Without loss of generality, we assume $D_l(t)>D_l^*(t)$. Suppose also that the LSE decreases its DA demand from $D_l(t)$ to $D_l^*(t)$. In response, let the DEC bidders increase their trading volumes to compensate for the demand shortfall and equalize the DA and RT market prices. Then the renewable suppliers would maintain their supply functions in both markets, as the total demands in these two markets are identical to the situation before the LSE adjusts its demand. Note that after these adjustments, a new equilibrium is established, because the payoffs for each renewable supplier and virtual trader are unchanged, and $D_l^*(t)$ is assumed to be optimal for the LSE.  So they would not deviate from their decisions. 

However, in the new equilibrium, the LSE's DA payment decreases, as it purchases less DA demand at an identical DA price. Its expected RT payment remains the same, because the supply functions and the residual demand in the RT market are unaffected as stated earlier. Consequently, the LSE's total payment decreases. This contradicts to the statement that $D_l(t)$ and $D_l^*(t)$ generate the same cost for the LSE. Thus, there would be no multiple optimal DA demand solutions.
\end{proof}

\end{document}